
\documentclass[12pt]{elsarticle}
\usepackage{amssymb}
\usepackage{amsmath}
\usepackage{amsfonts,dsfont}
\makeatletter
\def\ps@pprintTitle{%
 \let\@oddhead\@empty
 \let\@evenhead\@empty
 \def\@oddfoot{}%
 \let\@evenfoot\@oddfoot}
\makeatother

\usepackage{hyperref}

\usepackage{tikz}
\usetikzlibrary{arrows}
\newcommand {\junk}[1]{}
\usepackage{enumerate}



\usepackage{amsthm}


\newtheorem{theorem}{Theorem}[section]
\newtheorem{lemma}[theorem]{Lemma}
\newtheorem{proposition}[theorem]{Proposition}
\newtheorem{corollary}[theorem]{Corollary}
\newtheorem{algorithm}[theorem]{Algorithm}

\theoremstyle{definition}
\newtheorem{definition}[theorem]{Definition}

\newtheorem{example}[theorem]{Example}

\if{

\newtheorem{definition}[theorem]{Definition}

\newtheorem{lemma}[theorem]{Lemma}

\newtheorem{proposition}[theorem]{Lemma}

\newenvironment{proof}[1][Proof]{\noindent\textbf{#1.} }{\ \rule{0.5em}{0.5em}}
}\fi




\def\crit{{\mathcal G}^c}

\def\Nat{{\mathbb N}}

\def\bzero{{\mathbf 0}}

\def\R{{\mathbb R}}

\def\N{{\mathbb N}}
\def\Z{{\mathbb Z}}

\def\Rmax{\R_{\max}}
\def\Rp{\R_+}
\def\Rpnn{\Rp^{n\times n}}

\def\cycle{Z}

\def\digr{{\mathcal G}}

\def\sr{\lambda}
\def\circumf{\operatorname{cr}}

\def\ep{\operatorname{ep}}

\def\wiel{\operatorname{Wi}}
\def\DM{\operatorname{DM}}
\def\sch{\operatorname{Sch}}
\def\kim{\operatorname{Kim}}

\def\rem{\operatorname{rem}}
\def\cyc{\operatorname{c}}
\def\girth{\hat{\operatorname{g}}}

\def\cd{\operatorname{cd}}

\def\harg{\operatorname{HA}}
\def\nacht{\operatorname{N}}
\def\ct{\operatorname{CT}}

\def\thrct{{\mathcal T}^{ct}}

\def\thrha{{\mathcal T}^{ha}}

\def\bnacht{B_{\operatorname{N}}}
\def\bharg{B_{\operatorname{HA}}}
\def\bct{B_{\operatorname{CT}}}

\def\hagr{{\mathcal G}^{ha}}
\def\ctgr{{\mathcal G}^{ct}}

\newcommand{\walkslen}[3]{\mathcal{W}^{#3}(#1\to #2)}

\newcommand{\walkslennode}[4]{\mathcal{W}^{#3}(#1\xrightarrow{#4} #2)}
\newcommand{\walksnode}[3]{\mathcal{W}(#1\xrightarrow{#3} #2)}

\def\subcrit{{\mathcal G}}

\def\0{-\infty}


\begin{document}

\begin{frontmatter}



\setcounter{footnote}{1}
\title{New bounds on the periodicity transient of the powers of a tropical matrix: using cyclicity and factor rank}

\tnotetext[t1]{This work was partially supported by ANR
Perturbations grant (ANR-10-BLAN 0106). 
The work of S. Sergeev was also supported by EPSRC grant EP/P019676/1.}
\author[rvt1]{Arthur Kennedy-Cochran-Patrick}
\ead{axc381@student.bham.ac.uk}

\author[rvt2]{Glenn Merlet}
\ead{glenn.merlet@univ-amu.fr}

\author[rvt3]{Thomas Nowak\fnref{fntn}}
\ead{thomas.nowak@lri.fr}

\author[rvt1]{Serge{\u\i} Sergeev\corref{cor}}
\ead{s.sergeev@bham.ac.uk}

\address[rvt1]{University of Birmingham, School of Mathematics, 
Edgbaston B15 2TT, UK.}
\address[rvt2]{Aix Marseille Univ, CNRS, Centrale Marseille, I2M, Marseille, France}
\address[rvt3]{Universit\'e Paris-Saclay, CNRS, Orsay, France}

\cortext[cor]{Corresponding author. Email: s.sergeev@bham.ac.uk}

\begin{abstract}
Building on the weak CSR approach developed in a previous paper by Merlet, Nowak and Sergeev~\cite{wCSR}, we establish new bounds for the periodicity threshold of the powers of a tropical matrix. According to that approach, bounds on the ultimate 
periodicity threshold take the form of $T=\max(T_1,T_2)$, where $T_1$ is a bound on the time after which 
the weak CSR expansion starts to hold and $T_2$ is a bound on the time after which the first CSR
term starts to dominate. 

The new bounds on $T_1$ and $T_2$ established in this paper make use of the cyclicity of the associated graph
and the (tropical) factor rank of the matrix, which leads to much improved bounds in favorable cases.
For $T_1$, in particular, we obtain new extensions of bounds of Schwarz, Kim and Gregory-Kirkland-Pullman, previously 
known as bounds on exponents of digraphs. For similar bounds on $T_2$, we introduce the novel concept of walk reduction threshold and establish bounds on it that use both cyclicity and factor rank.  
\end{abstract}

\begin{keyword}
Max-plus, matrix powers, transient, periodicity, digraphs. 
\vskip0.1cm {\it{AMS Classification:}} 15A18, 15A23, 90B35
\end{keyword}

\journal{Linear Algebra and its Applications}






\end{frontmatter}

\section{Introduction}

By tropical linear algebra we mean the linear algebra developed over the tropical (max-plus) semiring,
which is the set $\Rmax:=\R\cup\{-\infty\}$ equipped with operations of ``addition'' $\oplus$: 
$a\oplus b:=\max(a,b)$ and ``multiplication'' $\otimes$: $a\otimes b:=a+b$. These operations are naturally 
extended to matrices and vectors in the usual way. In particular, given two matrices 
$A$ and $B$ of appropriate dimensions with entries $a_{ij}$ and $b_{ij}$ in $\Rmax$, we can define $A\oplus B$, which has entries $(A\oplus B)_{ij}:=a_{ij}\oplus b_{ij}$,
and $A\otimes B$, which has entries $(A\otimes B)_{ij}:=\bigoplus_k a_{ik}\otimes b_{kj}$. 

In this paper, we are interested in the 
tropical matrix powers. For a $d\times d$ tropical matrix $A\in\Rmax^{d\times d}$ and general $t\geq 1$ one defines 
$$ A^{t}
=\overbrace{A\otimes A\otimes A\otimes \cdots\otimes  A}^{t\text{ times}}$$
as the $t$\textsuperscript{th} tropical power of $A$. We formally define $A^0=I$, where $I$ is the {\em tropical identity matrix},
with diagonal entries equal to $0$ and off-diagonal entries equal to $-\infty$. 

 It has been known since the work of 
Cohen et al.~\cite{CDQV-83} that the tropical matrix powers, under some mild assumption on $A$, satisfy the following property 
after long enough time $T$ and for some integer $\sigma\geq 1$:
\begin{equation}\label{e:period}
\forall t\geq T\ :\quad A^{t+\sigma} = \lambda^{\otimes \sigma}\otimes A^t.
\end{equation}
Here~$\lambda=\lambda(A)$ is the maximum cycle mean of~$A$. In other words, we have that the tropical matrix powers of 
the matrix $\lambda^-\otimes A$, where $\lambda^-=-\lambda$ is the inverse of $\lambda$ with respect to 
$\otimes=+$, are ultimately periodic with some period $\sigma$.
The smallest~$T$ such that~\eqref{e:period} holds is called the {\em
transient} of~$A$ and denoted by~$T(A)$. 

Many different bounds on $T(A)$ have been formulated and proved since~\eqref{e:period} was observed, see Hartmann and Arguelles~\cite{HA-99}, Bouillard and Gaujal~\cite{BG-00},
Soto y Koelemeijer~\cite{SyK:03}, Akian, Gaubert and Walsh~\cite{AGW-05},  Charron-Bost, F\"{u}gger and Nowak~\cite{CBFN-12}, and  Merlet, Nowak and Sergeev~\cite{wCSR}, for an incomplete list of works on this matter. 

Merlet, Nowak and Sergeev~\cite{wCSR},  in particular, put forward a unifying idea using which most of the previously known bounds could be deduced and improved: the weak CSR expansion. 
This idea stems from a more detailed version of~\eqref{e:period} formulated by Sergeev~\cite{Ser-09}:
there exists a nonnegative integer~$T$ such that
\begin{equation}
\label{e:csr}
\forall t\geq T \ :\quad A^t=(\lambda(A))^{\otimes t}\otimes CS^tR\enspace,
\end{equation}
where the matrices~$C$, $S$, and~$R$ are defined in terms of~$A$ (see~\eqref{csrdef} below) and fulfill
$CS^{t+\sigma}R=CS^tR$ for all~$t\geq 0$. Here and below, the sign $\otimes$ is systematically omitted for the tropical matrix multiplication, but it is kept for the tropical scalar multiplication. Thus the sequence $CS^tR$ is periodic and the smallest~$T$ satisfying~\eqref{e:csr} is~$T(A)$. Note that in an earlier work, considering infinite-dimensional matrices, Akian, Gaubert and Walsh~\cite{AGW-05} gave a similar formulation
originating from the preprints of Cohen~et~al.~\cite{CDQV-83}.

Merlet, Nowak and Sergeev~\cite{wCSR}, based on the earlier results of Sergeev and Schneider~\cite{SerSch}, observed that the tropical matrix powers $A^t$ admit the following weak CSR expansion:
\begin{equation}
\label{weak-exp}
\forall t\geq T\ :\quad A^t=\big((\sr(A))^{\otimes t}\otimes
CS^tR\big)\oplus B^t
\enspace,
\end{equation}
where $C$, $S$, and $R$ are defined in~\eqref{csrdef} below and~$B$
is obtained from~$A$ by setting several entries (typically, all
entries in several rows and columns) to~$\0$. The smallest~$T$, for which~\eqref{weak-exp} holds, 
is denoted it by~$T_1(A,B)$. It is then quite obvious to bound $T(A)\le\max(T_1(A,B),T_2(A,B))$, where $T_2(A,B)$ is the least integer satisfying
\begin{equation}
\label{CSRdominance}
\forall t\geq T\ :\quad (\lambda(A))^{\otimes t}\otimes\big(CS^tR\big)\ge B^t
\enspace.
\end{equation}

Bounds on $T(A)$ then crucially depend on the quality of bounds on $T_1(A,B)$ and $T_2(A,B)$, which 
we are going to improve, with respect to what was obtained in \cite{wCSR}, in this paper. The bounds on 
$T_1(A,B)$ and $T_2(A,B)$ depend also on how matrix $B$ is defined and below we will discuss, following~\cite{wCSR},
three useful schemes for defining $B$: the Nachtigall scheme, the Hartmann-Arguelles scheme and the Cycle Threshold scheme. For the Nachtigall and the Hartmann-Arguelles scheme,  
it was shown in~\cite{wCSR} that  $T_1(A,B)$ is bounded by the {\em Wielandt number}
\begin{equation}
\label{e:wiel}
\wiel(d)=
\begin{cases}
(d-1)^2+1, &\text{if $d>1$},\\
0, & \text{if $d=1$},
\end{cases}
\end{equation}
where $d$ is the dimension of $A$, and by the {\em Dulmage-Mendelsohn number}
\begin{equation}
\label{e:DM}
\DM(\girth,d)=\girth(d-2)+d.
\end{equation}
where $\girth$ is the maximal girth of the strongly connected components of the 
critical graph associated with $A$. These numbers originate from 
seminal works of Wielandt~\cite{Wie-50} and Dulmage and Mendelsohn~\cite{DM-62} on nonnegative matrices. 

One of the main aims in this paper will be to prove that, for the same two schemes, we also 
have the following two bounds on $T_1(A,B)$: {\em Schwarz's bound}
\begin{equation}
\label{e:Sch}
\sch(\gamma,d)=\displaystyle
\gamma \cdot\wiel\left(\left\lfloor\frac{d}{\gamma}\right\rfloor\right)
+ (d\rem \gamma)
\end{equation}
where $\gamma$ is the cyclicity of the digraph associated with $A$ and  
{\em Kim's bound}
\begin{equation} 
\label{e:Kim}
\kim(\gamma,\girth,d)=\displaystyle
\girth\cdot\left(\left\lfloor\frac{d}{\gamma}\right\rfloor-2\right)+ d.
\end{equation}
Here and below, we will denote by~$d\rem \gamma$ the remainder of the Euclidean division of~$d$ by~$\gamma$. These bounds originate from the works of Schwarz~\cite{Sch-70} and Kim~\cite{Kim-79} on binary relations and Boolean matrix powers. For the Nachtigall scheme, the validity of these two bounds is shown in
Theorem~\ref{th:KimSchwarzNacht} (Section~\ref{s:Nacht}), and for the Hartmann-Arguelles scheme in Theorem~\ref{th:HA} (Section~\ref{s:T1AB}). 

Gregory, Kirkland and Pullman~\cite{GKP-95} showed how Boolean rank $r$, also known as Schein rank, can be used to replace $d$ in bounds~\eqref{e:wiel},~\eqref{e:DM},~\eqref{e:Sch} and~\eqref{e:Kim}, with a negligible penalty of adding $1$ to all of these bounds. Following the idea of Merlet et al.~\cite{CritCol} we replace the Boolean rank with the tropical factor rank. Studied by Develin, Santos and Sturmfels in~\cite{DSS-05} as Barvinok rank and further investigated in a number of works (e.g., Akian, Gaubert and Guterman~\cite{AGG-09}), the tropical factor rank of a square matrix over 
$\Rmax$ is a direct generalization of the Boolean or Schein rank. Using the tropical factor rank, we extend the bound of Gregory, Kirkland and Pullman to $T_1(A,B)$ in the case when the Nachtigall scheme is used: see Theorem~\ref{th:rank}.  
 
The proofs in the case of the Nachtigall scheme (Theorems~\ref{th:KimSchwarzNacht}
and~\ref{th:rank}) use the weak CSR expansions and the block decompositions related to cyclic classes. For the Hartmann-Arguelles scheme the same proofs do not work and we need to improve the bounds on the cycle removal threshold ($T_{cr}$) introduced in~\cite{wCSR}. We also introduce a new notion of the walk reduction threshold ($T_{wr}$), which is later used for the bounds on $T_2(A,B)$. New bounds on $T_{cr}$ that make use of the cyclicity of the associated digraph are obtained in Proposition~\ref{p:TcRLinGen}, Corollary~\ref{c:TcRLin} and Proposition~\ref{p:TcRn},  
and some bounds on the walk reduction threshold $T_{wr}$ are offered in Proposition~\ref{p:Twr}, including two bounds that involve the factor rank. 

Some new bounds on $T_2(A,B)$ are also obtained. These bounds make use of the cyclicity of the graph as well as of the tropical factor rank of the matrix and are based on the new bounds on $T_{cr}$ and $T_{wr}$ as well as a result of~\cite{wCSR} relating $T_2(A,B)$ to $T_{cr}$.

In the case when $B$ is defined according to the Cycle Threshold scheme, the connection to $T_{cr}$ established in~\cite{wCSR} does not allow us to obtain true analogues of the classical bounds. However, in this case we also obtain an improved bound on $T_1(A,B)$ in Theorem~\ref{th:T1CT}, involving the cyclicity of the associated digraph. 

\section{Preliminaries}
\label{s:prel}

In this section, we recall the main notions and facts that are necessary to 
understand the main results of the paper and how they are proved. We start by recalling the standard definitions concerning weighted digraphs and walks, and the optimal walk interpretation of the tropical matrix powers.  We also introduce the cyclic classes of a strongly connected digraph and discuss the related block decomposition of the associated matrix. In the last subsection, we formally introduce the CSR decomposition and the three types of weak CSR expansion (Nachtigall, Hatmann-Arguelles and Cycle Threshold), with which we are going to work in the paper.

\subsection{Matrices, graphs and tropical matrix powers}

The {\em digraph associated with a matrix} $\Rmax^{d\times d}$, denoted by $\digr(A)$, 
is the pair $(V,E)$ where $V=\{1,\ldots,d\}$ is the set of nodes and $E\subseteq V\times V$,
where  $(i,j)\in E$ if and only if $a_{ij}\neq -\infty$, is the set of arcs.  Digraph $\digr(A)$ is 
weighted by a function $w\colon E\mapsto \R$, which associates to each arc $(i,j)\in E$ its 
weight $w(i,j)=a_{ij}$. 

A {\em walk} on $\digr(A)=(V,E)$ is a sequence of nodes of $V$ such that every consecutive pair of nodes in this sequence is an arc (i.e., belongs to $E$). A walk in which no node is repeated is called a {\em path}. The weight of a walk $W$, denoted by $p(W)$,
is defined as a tropical product of the weights of all the edges in the walk (i.e.,  the conventional sum of the weights of these edges). Denoting by $\walkslen{i}{j}{t}$ the set of walks, for which the first node is $i$, the  last node is $j$ and the number of edges (i.e., the length) is $t$, one can easily obtain the following {\em optimal path interpretation} of the tropical matrix powers: 
\begin{equation}
\label{e:Atoptpath}
(A^t)_{ij}=\bigoplus\{p(W)\colon W\in \walkslen{i}{j}{t}\}.
\end{equation}
In words, the $(i,j)$th entry of $A^t$ equals to the optimal weight of the walks that connect $i$
to $j$ and have length $t$. When the ends $i$ and $j$ of a walk coincide, it is called a {\em closed walk}, and when there are no repetitions of nodes in the closed walk (except for the coincidence of the ends), such closed walk is called a {\em cycle}. Note that a closed walk can consist of just one node and no arcs: in this case it is an {\em empty walk} of length $0$.

$A$ is called {\em irreducible} if $\digr(A)=(V,E)$ is strongly connected, i.e., if there is a walk connecting $i$ to $j$ for any pair of nodes $i,j\in V$. A digraph $\digr$ is called {\em completely reducible} if it consists of a number of strongly connected components, commonly abbreviated as s.c.c.'s, such that there is no arc connecting any of these components to another.

The assumption that $A$ is irreducible is sufficient for the ultimate periodicity properties~\eqref{e:period} and~\eqref{e:csr} to hold, and the maximum cycle mean $\lambda(A)$, which participates in them, is defined as follows:
\begin{equation}
\begin{split}
\label{e:mcm}
\lambda(A)&=\bigoplus_{k=1}^d\bigoplus_{i_1,\ldots i_k} (a_{i_1i_2}\otimes\ldots\otimes a_{i_ki_1})^{\otimes 1/k}\\
& = \max_{k=1}^d \max\limits_{i_1,\ldots i_k} \frac{a_{i_1i_2}+\ldots+ a_{i_ki_1}}{k}
\end{split}
\end{equation}
On the digraph $\digr(A)$ associated with $A$, this is the maximum arithmetic mean of 
the weight of every cycle (or, equivalently, any closed walk). It is also known to be the unique tropical eigenvalue of 
$A$ when it is irreducible~\cite{But:10}.

The cycles $i_1i_2\ldots i_ki_1$ and nodes within said cycles, on which the maximum cycle mean $\lambda(A)$ is attained, are
called {\em critical}, and the subgraph of $\digr(A)$ consisting of all nodes and arcs belonging to such cycles is called the critical graph of $A$ and is denoted by $\crit(A)$. It is of utmost importance for the description of the long-term behavior of the tropical matrix powers. It is easy to see that $\crit(A)$ is a completely reducible digraph.

For $A\in\Rmax^{d\times d}$ with $\lambda(A)\leq 0$ we can also define the {\em Kleene star} of $A$ as the matrix $A^*$ equal to
\begin{equation*}
A^*=\bigoplus_{i=0}^{+\infty} A^i =\bigoplus_{i=0}^{d-1} A^{i},
\end{equation*}
recalling that $A^0=I$ (the tropical identity matrix).

In the last part of this subsection, let us also define some other digraph parameters
that will be significant for this work. 

For a strongly connected digraph $\digr$, its {\em girth} is defined as the smallest length of a cycle on $\digr$ and denoted by $g(\digr)$. For a completely reducible digraph $\digr$, its {\em max-girth} is defined as the greatest girth of all of its components, and denoted by $\hat{g}(\digr)$.

For a strongly connected digraph $\digr$, its {\em cyclicity} is defined as the greatest common divisor of the lengths of all cycles in $\digr$. For a completely reducible digraph $\digr$, its {\em cyclicity} is defined as the least common multiple of cyclicties of all components of $\digr$, and its {\em max-cyclicity} is defined as the greatest cyclicity of all components of $\digr$. Note that the cyclicity of the critical digraph $\crit(A)$ is commonly taken as the period $\gamma$ in the ultimate periodicity properties~\eqref{e:period} and~\eqref{e:csr} of the tropical matrix powers. Also, by the cyclicity of a matrix $A\in\Rmax^{d\times d}$ we mean the cyclicity of $\digr(A)$.

For a strongly connected digraph $\digr$, its {\em circumference} is defined as the biggest length of a cycle in $\digr$.

The above described parameters and properties of tropical matrix $A\in\Rmax^{d\times d}$ and $\digr(A)$ are stable under tropical diagonal similarity transformations of the form $A\mapsto D^{-}AD$. Here $D^-$ denotes the tropical inverse of $D$: the unique matrix satisfying $D^- D=D D^-=I$ (if it exists). The class of invertible tropical matrices is rather thin: it includes only the {\em finite tropical diagonal matrices} $D$ where all diagonal entries belong to $\R$ and all off-diagonal entries are $-\infty$, tropical permutation matrices $P$  such that, for some permutation $\pi$, their entries $p_{ij}$ are $0$ if $j=\pi(i)$ and $-\infty$ otherwise, and the {\em tropical monomial matrices}, i.e., all products of finite tropical diagonal matrices and tropical permutation matrices. Then, in particular, we have $\sr(A)=\sr(D^-A D)$  for any such transformation, and 
$\crit(A)=\crit(D^- A D)$ when $D$ is a tropical diagonal matrix. Most importantly, we have $(D^-A D)^t=D^- A^t D$ for any such transformation. Such parameters as girth, cyclicity or circumference are also stable under $A\mapsto D^-AD$, as $\digr(D^-AD)$, if we ignore the weights, is obtained from $\digr(A)$ by renumbering the nodes.

\subsection{Cyclic classes and structured irreducible matrices}

In a strongly connected digraph $\digr$, any walk connecting a fixed pair of nodes to each other has the same length modulo cyclicity $\gamma$ of $\digr$, see Brualdi and Ryser~\cite{BR} Lemma 3.4.1. This fact helps to define an equivalence relation on the set of nodes of $\digr$: two nodes $i$ and $j$ belong to the same equivalence class called {\em cyclic class} if the length of any walk between them is a multiple of $\gamma$. Note that Brualdi and Ryser~\cite{BR} call them imprimitivity components, but we will use the terminology that is more common in the tropical algebra.

In connection to this, let $A_1,A_2,\cdots A_\gamma$, for $\gamma$ being arbitrary integer, be matrices with nonnegative entries such that the tropical product $A_i  A_{i+1}$ is well defined for each $i$
(with the indices considered modulo~$\gamma$) and consider
\begin{equation}\label{eq:A}
A:=\left(\begin{array}{ccccc}
 -\infty  &A_1&-\infty&	\cdots&-\infty\\
\vdots&\ddots&\ddots&\ddots&\vdots \\
\vdots&&\ddots&A_{\gamma-2}&-\infty \\
 -\infty  &	\cdots&	\cdots	& -\infty &A_{\gamma-1}\\
A_\gamma&-\infty &\cdots&\cdots&-\infty\\
\end{array}\right).
\end{equation}
Here $-\infty$ mean blocks of matrix $A$ that consist of $-\infty$ entries only.
Notice that any irreducible matrix over $\Rmax$ can be put into this form by permuting the indices, or equivalently, by performing a transformation $A\mapsto P^{-} A P$ where $P$ is a tropical permutation matrix. In this case, we can assume that $\gamma$ is the cyclicity of~$A$, and then the sets of rows $N_i$ on which the submatrices $A_i$ stand (for $i=1,\ldots,\gamma$) are precisely the cyclic classes of $\digr(A)$. The edges connecting $N_i$ to $N_{i+1}$ (modulo $\gamma$) are weighted by entries from $A_i$, and any cycle of $\digr(A)$ contains nodes from each $N_i$ and its length is a multiple of $\gamma$. 

We will be interested in the tropical powers of~$A$ and their limits.
These tropical powers always have a block decomposition compatible with~\eqref{eq:A} and at most one non-$-\infty$ block on each row.
We will denote by~$M_i$ the possibly non-zero block of~$M$ on row~$i$, and in the sequel all indices are always considered modulo~$\gamma$.
This is consistent with~\eqref{eq:A} and for instance, $A^\gamma$ is block diagonal with~$A^\gamma_i=A_i\cdots A_{i+ \gamma-1}$.

We then observe that a cycle with length~$l$ with maximal average weight on~$\digr(A)$
gives a cycle with length~$l/\gamma$ with maximal average weight on each~$\digr(A^\gamma_i)$, and hence we have:
\begin{proposition}\label{p:cyclesAgamma}
 Let $A\in\Rmax^{d\times d}$ be an irreducible matrix of the form~\eqref{eq:A}. 
For each~$i$, we have
 \begin{equation*}
 \begin{split}
  \sr(A^\gamma_i)&=\gamma\cdot  \sr(A),\quad
 \cyc(\digr(A^\gamma_i))=\frac{\cyc(\digr(A))}{\gamma},\\
 \cyc(\crit(A^\gamma_i))&=\frac{\cyc(\crit(A))}{\gamma},\quad
\girth(\crit(A^\gamma_i))=\frac{\girth(\crit(A))}{\gamma}.
 \end{split}
 \end{equation*}
 where $ \sr(B)$ is the maximal average weight of cycles (or circuits) on $\digr(B)$, which is the largest tropical eigenvalue of~$B$,
 $\cyc(\digr)$ is the cyclicity of~$\digr$, and~$\girth(\digr)$ the maximum of the girths (length of shortest cycle)
 of its strongly connected components.
\end{proposition}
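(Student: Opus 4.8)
The plan is to reduce all four identities to a single combinatorial dictionary between walks on $\digr(A)$ and walks on each block graph $\digr(A^\gamma_i)$, and then read them off one by one. Fixing the cyclic class $N_i$ carrying $A^\gamma_i$, the key observation coming from~\eqref{e:Atoptpath} is that an arc $u\to v$ of $\digr(A^\gamma_i)$ (with $u,v\in N_i$) exists precisely when $\digr(A)$ has a walk of length $\gamma$ from $u$ to $v$, with $(A^\gamma_i)_{uv}$ equal to the optimal weight of such walks. Concatenating, a closed walk of length $k$ in $\digr(A^\gamma_i)$ lifts to a closed walk of length $k\gamma$ and equal weight in $\digr(A)$ (choosing optimal length-$\gamma$ segments), and conversely any closed walk of length $m\gamma$ in $\digr(A)$ based in $N_i$ --- recall every closed walk of $\digr(A)$ has length divisible by $\gamma$ --- samples, node by node every $\gamma$ steps, to a closed walk of length $m$ and at-least-as-large weight in $\digr(A^\gamma_i)$. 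From this, $\sr(A^\gamma_i)=\gamma\cdot\sr(A)$ follows by comparing mean weights in both directions, and the same dictionary shows $\digr(A^\gamma_i)$ is strongly connected with closed-walk lengths through a fixed node equal to $1/\gamma$ times those of $\digr(A)$; taking gcd's and recalling that this gcd equals the cyclicity gives $\cyc(\digr(A^\gamma_i))=\cyc(\digr(A))/\gamma$.

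For the two critical-graph identities I would first normalize by a diagonal similarity $A\mapsto D^-AD$ and a scaling by $\sr(A)$, both of which preserve $\crit$, $\cyc(\crit)$ and $\girth(\crit)$ and, since $A^\gamma$ is block diagonal, descend to the same normalization of each $A^\gamma_i$. This lets me assume $\sr(A)=0$, all arc weights $\le 0$, and the critical arcs exactly the weight-$0$ arcs. In this gauge a closed walk is critical iff all its arcs are, so every cycle of $\crit(A)$ is critical, and an arc of $A^\gamma_i$ is critical iff it lifts to an all-critical length-$\gamma$ walk of $\digr(A)$ between nodes of $N_i$.

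With this gauge I would match components. Each critical cycle of $\digr(A)$ has length divisible by $\gamma$ and hence meets every cyclic class, so every s.c.c. $\mathcal K$ of $\crit(A)$ meets $N_i$, and I claim the s.c.c.'s of $\crit(A^\gamma_i)$ are exactly the sets $\mathcal K\cap N_i$: connectedness follows by cutting a critical $\mathcal K$-walk into length-$\gamma$ segments, and maximality because a critical walk of $\crit(A^\gamma_i)$ lifts to a critical, hence $\mathcal K$-internal, walk. The cyclicity identity then comes from the fact that the closed-walk lengths through a node of $\mathcal K\cap N_i$ are $1/\gamma$ times those through it in $\mathcal K$ (all multiples of $\gamma$), giving $\cyc(\mathcal K\cap N_i)=\cyc(\mathcal K)/\gamma$, followed by an lcm over components using $\operatorname{lcm}(\gamma a_1,\dots,\gamma a_n)=\gamma\operatorname{lcm}(a_1,\dots,a_n)$. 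For the girth, a shortest cycle of $\mathcal K$ samples to a genuine cycle of $\mathcal K\cap N_i$ of $1/\gamma$ its length, while a shortest cycle of $\mathcal K\cap N_i$ lifts to a closed walk of $\mathcal K$ that must contain a cycle of length at least $g(\mathcal K)$; this forces $g(\mathcal K\cap N_i)=g(\mathcal K)/\gamma$, and maximizing over components gives the claim.

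I expect the delicate step to be proving that the critical structure transports \emph{exactly}, not merely one-sidedly: that the critical arcs of $A^\gamma_i$ are precisely the lifts of all-critical length-$\gamma$ walks, and that each $\mathcal K\cap N_i$ is both connected and maximal in $\crit(A^\gamma_i)$. The normalization is exactly what tames this, by converting ``critical'' into the local, weight-$0$ condition that is preserved under sampling and lifting of walks; without it one would be forced to argue with averages along sub-walks, where a heavy segment could in principle be offset by a light one. Once the component bijection and the uniform length-scaling by $\gamma$ are in place, everything else is the elementary gcd/lcm bookkeeping sketched above.
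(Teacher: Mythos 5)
Your proposal is correct and rests on the same core idea as the paper's (extremely terse, one-sentence) justification of this proposition: the dictionary between walks of $\digr(A^\gamma_i)$ and walks of $\digr(A)$ cut into length-$\gamma$ segments based in $N_i$, under which lengths scale by $\gamma$ and optimal weights correspond, so that $\sr$, cyclicity and critical girth all transport. The only step you assert without proof --- the existence of a diagonal similarity after which the critical arcs are exactly the weight-$0$ arcs --- is the known strict visualization scaling, which follows for instance from the max-balancing of Schneider and Schneider already cited in the paper (in a max-balanced matrix with $\sr(A)=0$ every arc lies on a cycle where it is minimal, so a weight-$0$ arc lies on a weight-$0$ cycle), so your gauge argument is legitimate but should carry that citation or short justification.
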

The first property listed in this Proposition will be commonly used to
assume that $\sr(A)=0$: in this case, $\sr$ of 
any diagonal block of $A^{\gamma}$ is also equal to $0$.

We also have the following straightforward relations:
\begin{align}
\label{eq:At}
A^{\gamma k+s+t}_i 	&= A^s_{i}(A^\gamma_{i+s})^kA^t_{i+s}\\
\label{eq:Ait}
(A^\gamma_{i})^{k+1}	&=A^{j-i}_i(A^\gamma_{j})^kA^{i-j}_{j}
\end{align}

\subsection{CSR decomposition and weak CSR expansions}
\label{ss:CSR}

In this section we will introduce the concepts related to CSR decomposition and 
weak CSR expansions. While doing this we 
will closely follow the lines of 
\cite{wCSR} and \cite{SerSch}.

For any~$A\in\Rmax^{d\times d}$, let $\sigma$ be the cyclicity of $\crit(A)$  and let
$M=\left((\sr(A)^- \otimes A\big)^{\sigma}\right)^*$ (the Kleene star of $\left((\sr(A)^- \otimes A\big)^{\sigma}\right)$). Define the matrices
$C,S,R\in\Rmax^{d\times d}$ by
\begin{equation}
\label{csrdef}
\begin{split}
c_{ij}&=
\begin{cases}
m_{ij} &\text{if $j$ is in $\crit(A)$}\\
\0 &\text{otherwise,}
\end{cases}\quad
r_{ij}=
\begin{cases}
m_{ij} &\text{if $i$ is in $\crit(A)$}\\
\0 &\text{otherwise,}
\end{cases}\\
s_{ij}&=
\begin{cases}
\sr(A)^- \otimes a_{ij} &\text{if $(i,j)\in \crit(A)$}\\
\0 &\text{otherwise.}
\end{cases}
\end{split}
\end{equation}
Note that for arbitrary $t$, we will often write $CS^tR[A]$ for $CS^tR$ where $C$, $S$ and $R$ are defined 
using $A$. As shown in \cite{SerSch}, the sequence $\{CS^tR[A]\}_{t\geq 1}$ is periodic with period 
$\sigma$ being the cyclicity of $\crit(A)$.

In general, matrix $B$ is defined in terms of the subgraph~$\digr$ of~$\digr(A)$ whose nodes determine the
entries that are set to~$-\infty$ in matrix~$B$:
\begin{equation}\label{e:CSRschemes}
b_{ij} =
\begin{cases}
-\infty & \text{if $i$ or $j$ is a node of $\digr$} \\
a_{ij} & \text{otherwise}.
\end{cases}
\end{equation}

In \cite{wCSR} we introduced three ways of how this $\digr$ and $B$ can be defined. 
Actually, the precise definition of Hartmann-Arguelles scheme and the Cycle Threshold 
scheme given below will not be used further in this paper. However, we are giving it here for the 
sake of completeness and closely following~\cite{wCSR}.

\begin{enumerate}
\item {\em Nachtigall scheme}. This scheme is named after the work of
Nachtigall~\cite{Nacht}. 
In this scheme, the subgraph $\digr$ is the same as $\crit(A)$,
and the matrix~$B$ is defined by~\eqref{e:CSRschemes} using $\digr=\crit(A)$. 
We will denote this $B$ by $\bnacht[A]$.

\item {\em Hartmann-Arguelles scheme}. This scheme is named after the work of 
Hartmann and Arguelles~\cite{HA-99}, and for this scheme we require the notion of max-balanced matrix and max-balancing scaling introduced and studied by 
Hans Schneider and Michael H. Schneider~\cite{SchSch}. 

Let $A\in\Rmax^{d\times d}$ be an associated weight matrix for the graph $\digr(A)$. We say $A$ is \emph{max-balanced} if for any set $W\subseteq \{1,\ldots,d\}$, we have
\begin{equation}
\max_{i\in W , j\notin W} a_{ij} = \max_{i \notin W,j\in W} a_{ij}
\end{equation} 
Equivalently the graph $\digr(A)$ is max-balanced if, for any subset of nodes $W$, the maximal weight over the edges leaving $W$ is equal to the maximal weight over arcs entering $W$. An important property of a max-balanced graph $\digr(A)$ is that for every arc there exists a cycle, on which this arc has the smallest weight~\cite{SchSch}.  

It follows from Schneider and Schneider~\cite{SchSch} that for an irreducible $A\in\Rmax^{d\times d}$ there exists a max-balanced matrix $V=D^- A D$, where $D$ is an appropriate diagonal matrix. $V$ is called a \emph{max-balancing} of $A$.   

Now let $V$ be a max-balancing of $A$. Given $\mu\in\Rmax$, we define the {\em
Hartmann-Arguelles threshold graph} $\thrha(\mu)$ induced by all
arcs~$(i,j)$ in $\digr(A)=\digr(V)$
with $v_{ij}\geq\mu$.
For $\mu=\lambda(A)=\lambda(V)$ we have $\thrha(\mu)=\crit(A)=\crit(V)$.
Let $\mu^{ha}$ be the maximum of $\mu\leq\lambda(A)$ such that $\thrha(\mu)$ has a s.c.c.\
that does not contain any s.c.c.\ of $\crit(A)$.
If no such~$\mu$ exists, then $\mu^{ha}=-\infty$ and $\thrha(\mu^{ha})=\digr(A)$.

The subgraph $\digr=\hagr$ defining~$B$ in the Hartmann-Arguelles scheme is the union
of the s.c.c. of $\thrha(\mu^{ha})$ intersecting~$\crit(A)$.
We denote this matrix~$B$ by $\bharg[A]$.

\item {\em Cycle Threshold scheme}.
For $\mu\in\Rmax$, define the
{\em cycle threshold graph} $\thrct(\mu)$ induced by all nodes
and arcs belonging to the cycles in $\digr(A)$ with mean weight greater or
equal to~$\mu$.
Again, for $\mu=\lambda(A)$ we have
$\thrct(\mu)=\crit(A)$.
Let $\mu^{ct}$ be the maximum of $\mu\leq\lambda(A)$ such that $\thrct(\mu)$ has a s.c.c.\
that does not contain any s.c.c.\ of $\crit(A)$.
If no such~$\mu$ exists, then $\mu^{ct}=\bzero$ and $\thrct(\mu^{ct})=\digr(A)$.

The subgraph $\digr=\ctgr$ defining~$B$ in the cycle threshold scheme is the union
of the s.c.c of~$\thrct(\mu^{ct})$ intersecting~$\crit(A)$.
This matrix~$B$ will be denoted by $\bct[A]$.
\end{enumerate}

When speaking about $T_1(A,B)$ and $T_2(A,B)$ where $B$ is defined using one of the three 
schemes above, we will use the following simplified notation:
\begin{equation*}
\begin{split}
& T_{1,\nacht}(A)=T_1(A,\bnacht[A]),\quad 
T_{1,\harg}(A)=T_1(A,\bharg[A]),\quad
T_{1,\ct}(A)=T_1(A,\bct[A]),\\
& T_{2,\nacht}(A)=T_2(A,\bnacht[A]),\quad 
T_{2,\harg}(A)=T_2(A,\bharg[A]),\quad
T_{2,\ct}(A)=T_2(A,\bct[A]),\\
\end{split}
\end{equation*}
Note that when we perform the operation of tropical scalar multiplication $A'= \mu\otimes A$ with $\mu\in\R$, we have $\sr(A')=\mu\otimes \sr(A)$, 
$\bnacht[A']=\mu\otimes \bnacht[A],$  $\bharg[A']=\mu\otimes \bharg[A]$ and $\bct[A']=\mu\otimes\bct[A]$, while $C$, $S$ and $R$ defined from $A'$ are the same as those defined from $A$. This implies that $T_1(A,B)$ and $T_2(A,B)$ defined for any of these three schemes using~\eqref{weak-exp} and~\eqref{CSRdominance} are invariant under tropical scalar multiplication. The bounds on $T_1(A,B)$ that we will obtain do not depend on the entries of $A$ and therefore are also invariant, and the invariance of new bounds on $T_2(A,B)$ listed in Theorem~\ref{th:T2TcrBounds} is easy to check. This shows that, for any bound on $T_1(A,B)$ and $T_2(A,B)$ proved below, we can assume without loss of generality that $\lambda(A)=0$. That said, some of the statements that we will prove, such as Lemmas~\ref{l:CSR-limit} and~\ref{l:T1N} below, are proved only for $\lambda(A)=0$, which is sufficient for our purposes.

As well as the tropical matrix powers themselves, CSR terms have a well-defined optimal walk 
interpretation~\cite{wCSR,SerSch}. To describe this interpretation, we next recall the following notation for sets of walks
used in~\cite{wCSR}:
\begin{itemize}
\item[1.] Recall that $\walkslen{i}{j}{t}$ is the set of all walks that connect $i$ to $j$ and have length $t$;
\item[2.] For a subgraph $\digr$, let $\walkslennode{i}{j}{t}{\digr}$ be the set of all walks that connect $i$ to $j$, 
have length $t$ and go through a node of $\digr$;
\item[3.] Let $\walkslennode{i}{j}{t,\ell}{\digr}$ be the set of all walks that connect $i$ to $j$, go through
a node of $\digr$ and have length $t$ modulo $\ell$.
\end{itemize}
For a set of walks $\mathcal{W}$ we denote by $p(\mathcal{W})$ the maximal weight of a walk in the set $\mathcal{W}$.

Using this notation we can write the following identities:
\begin{equation}
\label{e:optwalkint}
A^t_{ij}=p(\walkslen{i}{j}{t}),\quad (CS^tR[A])_{ij}=p(\walkslennode{i}{j}{t,\sigma}{\crit(A)}).
\end{equation}
The second of these identities, where $\sigma$ is the cyclicity of $\crit(A)$ and which requires $\lambda(A)=0$, was obtained in~\cite{SerSch}. See also~\cite{wCSR} for an enhanced version of it, which 
we will use below in Proposition~\ref{p:T2Tcr}.

Let us also prove the following facts about $CSR$, which we previously stated in another article~\cite{ReachBnds}, Proposition 2.16. 
Here the limit is understood in terms of the Euclidean topology over $\log(\Rmax)=\Rp$. Obviously, the operations of max-plus algebra are continuous with respect to this topology.

\begin{lemma}
\label{l:CSR-limit}
Let $A\in\Rmax^{d\times d}$ be such that $ \sr(A)=0$. Then for any natural $t$ we have
\begin{itemize}
\item[(i)]   $\lim_{k\to\infty} A^{t+\sigma k}=CS^tR[A]$ for any natural $t$, where $\sigma$ is the cyclicity of 
$\crit(A)$;
\item[(ii)]   $A^rCS^tR[A]=CS^{t+r}R[A]$ for any natural $t$ and $r$.
\end{itemize}
\end{lemma}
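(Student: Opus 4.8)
The plan is to establish (i) directly from the optimal-walk interpretation~\eqref{e:optwalkint} by a two-sided estimate, and then to read off (ii) from (i) together with the continuity of the tropical operations noted just before the statement. Throughout I use $\sr(A)=0$, which yields three facts: every cycle of $\digr(A)$ has weight at most $0$; every closed walk lying inside $\crit(A)$ has weight exactly $0$ (every cycle of $\crit(A)$ is itself critical, a standard consequence of $\sr(A)=0$); and there is a \emph{spectral gap} $\delta_0>0$ with $p(Z)\le-\delta_0$ for every simple cycle $Z$ of $\digr(A)$ that avoids $\crit(A)$ (such a cycle is non-critical, hence of strictly negative weight, and there are only finitely many simple cycles). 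Fixing an entry $(i,j)$, by~\eqref{e:optwalkint} the claim (i) is exactly $\lim_{k\to\infty}p(\walkslen{i}{j}{t+\sigma k})=p(\walkslennode{i}{j}{t,\sigma}{\crit(A)})$, whose right-hand side is the fixed scalar $(CS^tR[A])_{ij}\in\Rmax$.

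For the upper bound I split $\walkslen{i}{j}{t+\sigma k}$ according to whether a walk meets $\crit(A)$. A walk meeting $\crit(A)$ passes through a critical node and has length $\equiv t\pmod\sigma$, so it lies in $\walkslennode{i}{j}{t,\sigma}{\crit(A)}$ and its weight is at most $(CS^tR[A])_{ij}$. A walk $W$ avoiding $\crit(A)$ decomposes into a path (of at most $d-1$ arcs, of weight bounded by a constant $P$ depending only on $A$) and a family of simple cycles, each of which avoids $\crit(A)$; since $\mathrm{len}(W)=t+\sigma k$ and each simple cycle has at most $d$ arcs, there are at least $(t+\sigma k-d)/d$ of them, so by the spectral gap $p(W)\le P-\delta_0\,(t+\sigma k-d)/d\to-\infty$. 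Hence $\limsup_k p(\walkslen{i}{j}{t+\sigma k})\le(CS^tR[A])_{ij}$.

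For the lower bound, if $(CS^tR[A])_{ij}=-\infty$ there is nothing to prove (and indeed every length-$(t+\sigma k)$ walk then avoids $\crit(A)$, so the whole sequence tends to $-\infty$). If $(CS^tR[A])_{ij}$ is finite, the maximum defining it, being a finite tropical matrix product, is attained by a concrete walk $W^\ast$ through a critical node $c$ of some length $\ell\equiv t\pmod\sigma$. The s.c.c.\ of $\crit(A)$ containing $c$ has cyclicity $\gamma_c\mid\sigma$, so by the standard description of closed-walk lengths in a strongly connected digraph~\cite{BR} there is $N_c$ such that $c$ carries a weight-$0$ closed walk of length $m\gamma_c$ for every $m\ge N_c$. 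Since $t+\sigma k-\ell$ is a nonnegative multiple of $\sigma$ (hence of $\gamma_c$) and exceeds $N_c\gamma_c$ for all large $k$, inserting such a closed walk at $c$ turns $W^\ast$ into a walk of length exactly $t+\sigma k$ with the same weight. Thus $p(\walkslen{i}{j}{t+\sigma k})\ge(CS^tR[A])_{ij}$ for all large $k$, and combined with the upper bound this proves (i); in fact the sequence is eventually constant.

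Finally, (ii) is immediate from (i) and the continuity of tropical matrix multiplication: $A^r\,CS^tR[A]=A^r\bigl(\lim_{k\to\infty}A^{t+\sigma k}\bigr)=\lim_{k\to\infty}A^{(t+r)+\sigma k}=CS^{t+r}R[A]$. I expect the upper bound in (i) to be the main obstacle: one must argue rigorously that walks avoiding the critical graph lose weight without bound (this is where the cycle decomposition and the gap $\delta_0$ are essential), and one must also produce critical padding of the \emph{exact} length $t+\sigma k-\ell$, which rests on the arithmetic of closed-walk lengths in a component of cyclicity $\gamma_c\mid\sigma$.
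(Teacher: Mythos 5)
Your proof is correct, but part (i) takes a genuinely different route from the paper's. The paper deduces (i) in two lines from the full CSR expansion of Sergeev--Schneider (displayed as~\eqref{e:CSR-exp} in the text): for large $t$, $A^{t+\sigma k}$ equals $CS^tR[A]$ tropically summed with finitely many terms $\lambda_i^{\otimes (t+\sigma k)}\otimes C_iS_i^{t+\sigma k}R_i$ whose coefficients are periodic and whose rates satisfy $\lambda_i<0$, so the extra terms tend to $-\infty$ as $k\to\infty$. You instead prove the limit directly from the optimal-walk interpretation~\eqref{e:optwalkint}: the upper bound by separating walks that avoid $\crit(A)$ and killing them with a cycle-decomposition/spectral-gap estimate, and the lower bound by padding an optimal walk with weight-zero critical closed walks of the exact residual length, using the numerical-semigroup fact for closed-walk lengths in a component of cyclicity $\gamma_c\mid\sigma$. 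This is essentially a self-contained rendering of the remark the paper makes immediately after the lemma, namely that one could alternatively argue via the weak CSR expansion with the Nachtigall $B$ and the fact that $\sr(B)<0$: your walks avoiding $\crit(A)$ are precisely the walks counted by $B^{t+\sigma k}$, and your gap $\delta_0$ is exactly the statement $\sr(B)<0$. What the paper's route buys is brevity, at the price of invoking~\cite{SerSch} as a black box; what your route buys is self-containedness (modulo the walk interpretation of $CS^tR$, which also comes from~\cite{SerSch} but is stated in the preliminaries) together with the slightly stronger conclusion that the sequence $A^{t+\sigma k}$ is eventually \emph{constant} entrywise, i.e.\ the dominance needed for weak CSR with the Nachtigall scheme holds beyond a finite threshold, not merely in the limit. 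Part (ii) is proved identically in both: multiply the limit in (i) by $A^r$ and use continuity of the tropical operations.
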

\begin{proof}
(i): As established in~\cite{SerSch}, for any $A\in\Rmax^{d\times d}$ with $\sr(A)=0$
we have the CSR expansion 
\begin{equation}
\label{e:CSR-exp}
A^{t+\sigma k}=
CS^tR[A]\oplus \bigoplus_{i=1}^m \lambda_i^{\otimes t+\sigma k}\otimes C_i S_i^{t+\sigma k} R_i, \quad\forall t\geq T 
\end{equation}
for some $m\leq d-1$ and some big enough integer $T$. In this expansion, the sequences $\{C_iS_i^tR_i\}_{t\geq 1}$ 
are periodic (with periods different from $\sigma$), and all $\lambda_i<0$, for $i=1,\ldots,m$. 
It then follows that the tropical sum on the right-hand side of~\eqref{e:CSR-exp} tends to
$-\infty$ as $k\to\infty$, which immediately implies the claim.

(ii): This is now an easy corollary of part (i), since we have
$$
A^r\lim_{k\to\infty} A^{t+\sigma k}=\lim_{k\to\infty} A^{t+r+\sigma k}
$$
by the continuity of tropical arithmetics.
\end{proof}

An alternative way to prove part (i) of the above lemma is to use the weak CSR expansion 
$A^{t+\sigma k}=CS^tR[A]\oplus B^{t+\sigma k}$ where $B$ is defined, e.g., as in the Nachtigall scheme, 
and then use that all cycles in $\digr(B)$ have negative weight, which is equivalent to $\sr(B)<0$.

The following statement holds in the particular case of the Nachtigall expansion:
\begin{lemma}
\label{l:T1N}
Let $A\in\Rmax^{d\times d}$ have $\sr(A)=0$. Then $A^t\geq CS^tR[A]$  if 
and only if $t\geq T_{1,\nacht}(A)$.
\end{lemma}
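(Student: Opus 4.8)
Write $B=\bnacht[A]$. Because $\sr(A)=0$, the Nachtigall weak CSR expansion \eqref{weak-exp} reads $A^t=CS^tR[A]\oplus B^t$, and by definition $T_{1,\nacht}(A)$ is the least $T$ for which this equality holds for all $t\ge T$. The plan is to show that, for every $t$, the full equality is equivalent to the single inequality $A^t\ge CS^tR[A]$, and then to check that the set of $t$ satisfying this inequality is upward closed; together these identify that set with $\{t:t\ge T_{1,\nacht}(A)\}$.

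First I would record two inequalities that hold for every $t$. Since $B\le A$ entrywise and tropical matrix multiplication is monotone, $B^t\le A^t$, so $A^t\ge B^t$ always. For the opposite bound $A^t\le CS^tR[A]\oplus B^t$ I would use the optimal walk interpretation \eqref{e:optwalkint}. A walk in $\walkslen{i}{j}{t}$ either avoids $\crit(A)$, in which case it is a walk of $\digr(B)$ and its weight is at most $(B^t)_{ij}$, or it meets $\crit(A)$, in which case it belongs to $\walkslennode{i}{j}{t}{\crit(A)}$. As a walk of length exactly $t$ has length $t$ modulo $\sigma$, we have $\walkslennode{i}{j}{t}{\crit(A)}\subseteq\walkslennode{i}{j}{t,\sigma}{\crit(A)}$, so by \eqref{e:optwalkint} the weight of such a walk is at most $(CS^tR[A])_{ij}$. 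Taking the maximum over the two classes yields $A^t_{ij}\le(CS^tR[A]\oplus B^t)_{ij}$.

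Combining the two free inequalities, for each fixed $t$ the equality $A^t=CS^tR[A]\oplus B^t$ is equivalent to $A^t\ge CS^tR[A]\oplus B^t$, and since $A^t\ge B^t$ is automatic, this in turn is equivalent to $A^t\ge CS^tR[A]$. To see that these $t$ form an upward closed set, suppose $A^t\ge CS^tR[A]$; multiplying on the left by $A$ and using monotonicity together with Lemma \ref{l:CSR-limit}(ii) (with $r=1$) gives $A^{t+1}=A\,A^t\ge A\,CS^tR[A]=CS^{t+1}R[A]$. Hence, once the inequality holds at some $t$ it holds at all larger exponents. Since at each $t$ it is equivalent to the Nachtigall expansion, which by definition holds exactly for $t\ge T_{1,\nacht}(A)$, we conclude that $A^t\ge CS^tR[A]$ holds if and only if $t\ge T_{1,\nacht}(A)$.

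I expect the crux to be the upper bound $A^t\le CS^tR[A]\oplus B^t$, specifically the inclusion $\walkslennode{i}{j}{t}{\crit(A)}\subseteq\walkslennode{i}{j}{t,\sigma}{\crit(A)}$ feeding into the walk identity $(CS^tR[A])_{ij}=p(\walkslennode{i}{j}{t,\sigma}{\crit(A)})$; this identity is exactly where the hypothesis $\sr(A)=0$ is genuinely used. The monotonicity step and the reduction to a single inequality are then routine consequences of Lemma \ref{l:CSR-limit} and of the two elementary bounds.
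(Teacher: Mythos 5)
Your proof is correct and follows essentially the same route as the paper's: the trivial forward direction, the free inequality $A^t\le CS^tR[A]\oplus B^t$ obtained from the same walk decomposition and the optimal walk interpretation of $CS^tR$, and propagation of $A^t\ge CS^tR[A]$ to larger exponents via Lemma~\ref{l:CSR-limit}(ii). The only difference is cosmetic: you iterate multiplication by $A$ (upward closedness step by step), while the paper multiplies by $A^r$ for arbitrary $r\ge 0$ in one stroke.
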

\begin{proof}
One part of the claim is obvious: if $t\geq T_{1,\nacht}(A)$ then
$A^t=CS^tR[A]\oplus B^t$   implies 
$A^t\geq CS^tR[A]$. 

For the opposite part, let us recall that the set of
walks $\walkslen{i}{j}{t}$ is decomposed into: 1) the set of walks $\walkslennode{i}{j}{t}{\crit(A)}$
that go through a node of $\crit(A)$, 2) the set of walks 
$\walkslen{i}{j}{t}\backslash \walkslennode{i}{j}{t}{\crit(A)}$ that do not go through any node
of $\crit(A)$. By the optimal walk interpretation of tropical matrix powers we have
\begin{equation*}
(A^t)_{ij}=p(\walkslen{i}{j}{t})=p(\walkslennode{i}{j}{t}{\crit(A)})
\oplus p(\walkslen{i}{j}{t}\backslash \walkslennode{i}{j}{t}{\crit(A)}),
\end{equation*}
 where for the first term we have 
\begin{equation*}
p(\walkslennode{i}{j}{t}{\crit(A)})\leq 
p(\walkslennode{i}{j}{t,\sigma}{\crit(A)}=(CS^tR[A])_{ij},
\end{equation*}
using the optimal walk interpretation of CSR terms, and for the second term
\begin{equation*}
 p(\walkslen{i}{j}{t} \backslash \walkslennode{i}{j}{t}{\crit(A)})=B^t_{ij},
\end{equation*}
if $B$ is defined as in the Nachtigall scheme (in particular, $B^t\leq A^t$). 
The above observations imply that we have
\begin{equation}
\label{e:weakCSRNacht}
A^t\leq CS^tR[A]\oplus B^t
\end{equation} 
for any $t$. Now suppose that we have 
$A^t\geq CS^tR[A]$, then we multiply this inequality by $A^r$ for any $r\geq 0$ and obtain $A^{t+r}\geq CS^{t+r}R$ for any $r\geq 0$
using Lemma~\ref{l:CSR-limit} part (ii). As also $A^{t+r}\geq B^{t+r}$ for any $t+r$, we immediately obtain
$A^{t+r}=CS^{t+r}R[A]\oplus B^{t+r}$ for any $r\geq 0$, thus $t\geq T_{1,\nacht}(A)$. 
\end{proof}

\section{The case of Nachtigall expansion}
\label{s:Nacht}

This section is devoted to the case of Nachtigall expansion. Here we prove that the bounds of Schwarz and Kim work for $T_1(A,B)$, when $B$ is defined as in the Nachtigall scheme
(Theorem~\ref{th:KimSchwarzNacht}). We further improve these bounds, as well as bounds of Wielandt and Dulmage-Mendelsohn established in~\cite{wCSR}, for the case when $A$ has a non-trivial tropical factor rank $r$ (Theorem~\ref{th:rank}). The proof technique here makes use mostly of the identities~\eqref{eq:CStR} and~\eqref{eq:CStRAi} and of the block decompositions related to cyclic classes. 

\subsection{The bounds of Kim and Schwarz}
\label{ss:KimSchwarzNacht}

Using Lemma~\ref{l:CSR-limit} we obtain the 
following identities as limits of~\eqref{eq:At} 
and~\eqref{eq:Ait}:
\begin{align}
\label{eq:CStR}
CS^{\gamma k+s+t}R[A]_i	&=A^s_{i}CS^kR[A^\gamma_{i+s}]A^t_{i+s}\\
\label{eq:CStRAi}
CS^{k+1}R[A^\gamma_{i}] &= A^{j-i}_iCS^{k}R[A^\gamma_{j}]A^{i-j}_{j}
\end{align}
in the case when $\sr(A)=0$. 

In all of the proofs in this section we assume without loss of generality that $\sr(A)=0$. By Proposition~\ref{p:cyclesAgamma} we then also have $\sr(A_i^{\gamma})=0$ for all $i$, allowing us to apply Lemmas \ref{l:CSR-limit} and \ref{l:T1N} both to $A$ and to $A_i^{\gamma}$ for any $i$.

\begin{lemma}
\label{l:Aijgamma}
The following two relations hold for all 
$i,j\in\{1,\ldots,\gamma\}$:
\begin{itemize}
\item[(i)] $T_{1,\nacht}(A)\le \gamma \max_i T_{1,\nacht}(A^\gamma_i),$
\item[(ii)]$|T_{1,\nacht}(A^\gamma_i)- T_{1,\nacht}(A^\gamma_j)|\le 1.$
\end{itemize}
\end{lemma}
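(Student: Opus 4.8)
The plan is to reduce both inequalities to the characterization supplied by Lemma~\ref{l:T1N}: under the standing assumption $\sr=0$, the number $T_{1,\nacht}(M)$ is exactly the least $t$ for which $M^t\ge CS^tR[M]$, so that I only ever have to verify or transport matrix inequalities of this shape. All transport will be done through the block identities \eqref{eq:At}--\eqref{eq:Ait} and their CSR counterparts \eqref{eq:CStR}--\eqref{eq:CStRAi}, combined with the monotonicity of tropical matrix multiplication. I rely on the fact recorded after Proposition~\ref{p:cyclesAgamma} that $\sr(A)=0$ forces $\sr(A^\gamma_i)=0$, so that Lemma~\ref{l:T1N} applies simultaneously to $A$ and to every $A^\gamma_i$. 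A preliminary observation I will make explicit is that both $A^t$ and $CS^tR[A]$ respect the block-sparsity compatible with \eqref{eq:A}, carrying at most one non-$-\infty$ block per block-row in the same position; hence the matrix inequality $A^t\ge CS^tR[A]$ is equivalent to the collection of blockwise inequalities $A^t_i\ge CS^tR[A]_i$ over all~$i$.

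For part (i) I would set $T^*=\max_i T_{1,\nacht}(A^\gamma_i)$ and take any $t\ge \gamma T^*$. Writing $t=\gamma k+s$ with $0\le s<\gamma$ forces $k=\lfloor t/\gamma\rfloor\ge T^*$. Specializing \eqref{eq:At} and \eqref{eq:CStR} at last exponent $0$ and using $A^0=I$ gives $A^t_i=A^s_i(A^\gamma_{i+s})^k$ and $CS^tR[A]_i=A^s_i\,CS^kR[A^\gamma_{i+s}]$. Since $k\ge T^*\ge T_{1,\nacht}(A^\gamma_{i+s})$, Lemma~\ref{l:T1N} applied to $A^\gamma_{i+s}$ yields $(A^\gamma_{i+s})^k\ge CS^kR[A^\gamma_{i+s}]$; left-multiplying by $A^s_i$ and using monotonicity gives $A^t_i\ge CS^tR[A]_i$ for every $i$, hence $A^t\ge CS^tR[A]$. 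By Lemma~\ref{l:T1N} applied to $A$ this gives $t\ge T_{1,\nacht}(A)$, and taking $t=\gamma T^*$ yields the asserted bound.

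For part (ii) it suffices by symmetry in $i,j$ to prove $T_{1,\nacht}(A^\gamma_i)\le T_{1,\nacht}(A^\gamma_j)+1$. I would put $k=T_{1,\nacht}(A^\gamma_j)$, so $(A^\gamma_j)^k\ge CS^kR[A^\gamma_j]$ by Lemma~\ref{l:T1N}, and then use the transfer identities \eqref{eq:Ait} and \eqref{eq:CStRAi}:
\[
(A^\gamma_i)^{k+1}=A^{j-i}_i(A^\gamma_j)^kA^{i-j}_j\ge A^{j-i}_i\,CS^kR[A^\gamma_j]\,A^{i-j}_j=CS^{k+1}R[A^\gamma_i],
\]
where the middle inequality is again monotonicity applied on both sides. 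By Lemma~\ref{l:T1N} applied to $A^\gamma_i$, this inequality at exponent $k+1$ forces $k+1\ge T_{1,\nacht}(A^\gamma_i)$, i.e.\ $T_{1,\nacht}(A^\gamma_i)\le T_{1,\nacht}(A^\gamma_j)+1$; swapping the roles of $i$ and $j$ finishes the proof.

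I do not expect a genuine obstacle here: once Lemma~\ref{l:T1N} is in hand, both statements collapse to applying the four block identities together with monotonicity of tropical multiplication. The only points that demand care are essentially bookkeeping — the modular arithmetic on the cyclic-class indices, the justification that blockwise inequalities assemble into a full matrix inequality (which rests on the matching sparsity pattern noted above), and the elementary check that $\lfloor t/\gamma\rfloor\ge T^*$ for all $t\ge\gamma T^*$.
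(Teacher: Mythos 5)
Your proof is correct and takes essentially the same approach as the paper's: both parts reduce to the characterization of $T_{1,\nacht}$ in Lemma~\ref{l:T1N} and transport the required inequality through the block identities \eqref{eq:At}, \eqref{eq:Ait}, \eqref{eq:CStR}, \eqref{eq:CStRAi} using monotonicity of tropical matrix multiplication. The only difference is cosmetic: in part (i) you verify $A^t\ge CS^tR[A]$ for every $t\ge\gamma T^*$ (allowing a general remainder $s$), whereas the paper checks it only at $t=\gamma\max_i T_{1,\nacht}(A^\gamma_i)$, i.e.\ $s=0$, which already suffices by Lemma~\ref{l:T1N}.
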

\begin{proof}

(i) Let $k=\max_i T_{1,\nacht}(A_i^{\gamma})$. We have 
$A_i^{\gamma k}=(A_i^{\gamma})^k$ and its limit 
version $CS^{\gamma k}R[A]_i=CS^kR[A_i^{\gamma}]$.
Using Lemma~\ref{l:T1N} we have
\begin{equation*}
    A_i^{\gamma k}=(A_i^{\gamma})^k\geq CS^kR[A_i^{\gamma}]=CS^{\gamma k}R[A]_i.
\end{equation*}
for all $i$. Using Lemma~\ref{l:T1N} 
again, we obtain $\gamma k\geq T_{1,\nacht}(A)$
thus proving the claim.

(ii): Let 
$k=T_{1,N}(A_j^{\gamma})$. Using~\eqref{eq:Ait},
~\eqref{eq:CStRAi} and Lemma~\ref{l:T1N} we obtain
\begin{equation*}
(A^\gamma_{i})^{k+1}	=A^{j-i}_i(A^\gamma_{j})^kA^{i-j}_{j}
\geq A^{j-i}_iCS^{k}R[A^\gamma_{j}]A^{i-j}_{j}
= CS^{k+1}R[A^\gamma_{i}].
\end{equation*}
Using Lemma~\ref{l:T1N} we obtain $k+1\geq T_{1,\nacht}(A_i^{\gamma})$. This proves the claim since $i$ and $j$ are arbitrary.  
\end{proof}

\if{
From~\eqref{eq:At} and~\eqref{eq:CStR} with~$s=t=0$, we deduce
From~\eqref{eq:Ait} and~\eqref{eq:CStRAi}, we deduce
\begin{equation}\label{eq:TNAivsTNAj}
|T_{1,N}(A^\gamma_i)- T_{1,N}(A^\gamma_j)|\le 1,
\end{equation}
}\fi

The inequalities of Lemma~\ref{l:Aijgamma} immediately imply $T_{1,\nacht}(A)\le \gamma (\wiel(\lfloor\frac{d}{\gamma}\rfloor)+1)$ for a matrix of size~$d$ and cyclicity~$\gamma$. We now improve this result to obtain 
extensions of some bounds for Boolean matrix powers~\cite{GKP-95, Kim-79, Sch-70}.


\begin{theorem}\label{th:KimSchwarzNacht}
Let $A\in\Rmax^{d\times d}$ be irreducible. Denote by $\gamma$ the
cyclicity of $\digr(A)$ and by~$\girth$ the maximal girth of strongly connected components of~$\crit(A)$. The following upper bounds on $T_{1,N}(A)$ hold:
\begin{itemize}
\item[{\rm (i)}] $\sch(\gamma,d) =  \displaystyle
\gamma \cdot\wiel\left(\left\lfloor\frac{d}{\gamma}\right\rfloor\right)
+ (d\rem \gamma) $;
\item[{\rm (ii)}] $\kim(\gamma,\girth,d)= \displaystyle
\girth.\left(\left\lfloor\frac{d}{\gamma}\right\rfloor-2\right)+ d$.
\end{itemize}
 
\end{theorem}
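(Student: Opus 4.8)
The plan is to treat both bounds as instances of one computation. I assume without loss of generality that $\sr(A)=0$ and put $A$ in the block-cyclic form~\eqref{eq:A}, so that each diagonal block $A^\gamma_i$ is irreducible of size $d_i:=|N_i|$ with $\sum_i d_i=d$, has cyclicity $1$, satisfies $\sr(A^\gamma_i)=0$, and has critical max-girth $\girth/\gamma$ by Proposition~\ref{p:cyclesAgamma}. Writing $T^*_i:=T_{1,\nacht}(A^\gamma_i)$, the bounds of~\cite{wCSR} give $T^*_i\le f(d_i)$, where $f=\wiel$ for part~(i) and $f(n)=\DM(\girth/\gamma,n)=(\girth/\gamma)(n-2)+n$ for part~(ii); both choices are strictly increasing. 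Setting $q=\lfloor d/\gamma\rfloor$ and $r=d\rem\gamma$, I would fix the single exponent $\tau:=\gamma f(q)+r$ and show $A^\tau\ge CS^\tau R[A]$; by Lemma~\ref{l:T1N} this gives $T_{1,\nacht}(A)\le\tau$. Part~(i) then follows since $\gamma\wiel(q)+r=\sch(\gamma,d)$, and part~(ii) since $\gamma\DM(\girth/\gamma,q)+r=\girth(q-2)+\gamma q+r=\girth(q-2)+d=\kim(\gamma,\girth,d)$.

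First I would record the reduction to the blocks. For any row $i$ and any decomposition $\tau=\gamma k+s+u$ with $k,s,u\ge 0$, the identities~\eqref{eq:At} and~\eqref{eq:CStR} give $A^\tau_i=A^s_i(A^\gamma_{i+s})^kA^u_{i+s}$ and $CS^\tau R[A]_i=A^s_iCS^kR[A^\gamma_{i+s}]A^u_{i+s}$. If $k\ge T^*_{i+s}$, then $(A^\gamma_{i+s})^k\ge CS^kR[A^\gamma_{i+s}]$ by Lemma~\ref{l:T1N} applied to $A^\gamma_{i+s}$, and left/right multiplication by $A^s_i$ and $A^u_{i+s}$ preserves this by monotonicity of tropical multiplication, whence $A^\tau_i\ge CS^\tau R[A]_i$. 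So it suffices to exhibit, for every row $i$, a decomposition $\tau=\gamma k+s+u$ with $k\ge T^*_{i+s}$; here the freedom to choose the ``pivot'' class $i+s$ is the key lever.

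The feasibility rests on block sizes together with Lemma~\ref{l:Aijgamma}(ii). Since $\min_i d_i\le d/\gamma<q+1$, some block has size $\le q$, so $m:=\min_j T^*_j\le f(d_{\min})\le f(q)$, and by Lemma~\ref{l:Aijgamma}(ii) every $T^*_j$ lies in $\{m,m+1\}$. If $m\le f(q)-1$, then $\max_jT^*_j\le f(q)$ and I take $k=\max_jT^*_j$, $s=0$, $u=\tau-\gamma\max_jT^*_j\ge r\ge0$ for every row. The decisive case is $m=f(q)$: strict monotonicity of $f$ and $f(d_{\min})\ge m=f(q)$ with $d_{\min}\le q$ force $d_{\min}=q$, so all blocks have size $\ge q$ and $\sum_j(d_j-q)=r$; hence at most $r$ blocks have size $\ge q+1$. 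Any ``slow'' block (with $T^*_j=m+1=f(q)+1$) must satisfy $d_j\ge q+1$, since otherwise $T^*_j\le f(d_j)\le f(q)$; therefore there are at most $r$ slow blocks. Arranging the $\gamma$ cyclic classes on a cycle, any $r+1$ consecutive of them contain a ``fast'' class ($T^*_j=m$), since $r+1\le\gamma$. For each row $i$ I would pick such a fast class at position $i+s$ with $0\le s\le r$ and set $k=f(q)\ge T^*_{i+s}$ and $u=r-s\ge0$, so that $\gamma k+s+u=\gamma f(q)+r=\tau$. In all cases every row admits a valid decomposition, giving $A^\tau\ge CS^\tau R[A]$ and hence $T_{1,\nacht}(A)\le\tau$.

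The hard part is precisely this last case $m=f(q)$: the crude estimate $T_{1,\nacht}(A)\le\gamma\max_jT^*_j$ already following from Lemma~\ref{l:Aijgamma} only yields $\gamma f(q)+\gamma$, and shrinking the additive term from $\gamma$ to $r$ is exactly what requires pinning the number of slow blocks to at most $r=d\rem\gamma$ and then letting each row ``borrow'' a nearby fast block through the extra factors $A^s_i$ and $A^u_{i+s}$. I would also take care at small parameters (for instance $\wiel(1)=0$, the case $q=1$, or $d_j<2$ in the Dulmage--Mendelsohn estimate) to ensure that $f$ is genuinely non-decreasing and that the chosen $k,s,u$ remain non-negative.
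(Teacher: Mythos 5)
Your proof is correct and follows essentially the same route as the paper: reduce to the diagonal blocks of $A^\gamma$ via Lemma~\ref{l:T1N}, Lemma~\ref{l:Aijgamma} and the identities~\eqref{eq:At} and~\eqref{eq:CStR}, apply the Wielandt and Dulmage--Mendelsohn bounds of~\cite{wCSR} to those blocks, and use the fact that at most $d\rem\gamma$ blocks exceed size $\left\lfloor\frac{d}{\gamma}\right\rfloor$ so that every row can borrow a pivot block at distance $s\le d\rem\gamma$ with $s+u=d\rem\gamma$. The only cosmetic difference is that the paper splits cases on the minimal block \emph{size} ($m\le\left\lfloor\frac{d}{\gamma}\right\rfloor-1$ versus $m=\left\lfloor\frac{d}{\gamma}\right\rfloor$) whereas you split on the minimal block \emph{transient}; the underlying counting argument is the same.
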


\begin{proof}
The bounds follow from the application of~\eqref{e:wiel} and~\eqref{e:DM}, which are the first two bounds of~\cite[Theorem 4.1]{wCSR}, to the $A^\gamma_{i}$ with minimal size.
This size~$m$ is at most~$\left\lfloor\frac{d}{\gamma}\right\rfloor$.
When it is at most $\left\lfloor\frac{d}{\gamma}\right\rfloor-1$, the bounds follow from the inequalities of Lemma~\ref{l:Aijgamma}.
When $m=\left\lfloor\frac{d}{\gamma}\right\rfloor$, we use the fact that at most $d\rem \gamma$ blocks have a strictly larger size (otherwise the total size would be larger than~$d$).
In this case, we set $$k=\max_{A^\gamma_{i}\textnormal{ has size }m} T_{1,\nacht}(A^\gamma_{i}).$$
Using~\eqref{eq:At} and~\eqref{eq:CStRAi} with 
$k$ as above and $s+t=d\rem\gamma$ and applying Lemma~\ref{l:T1N} we obtain
\begin{equation*}
A^{\gamma k+s+t}_{i}	=A^s_i(A^\gamma_{i+s})^k A^{t}_{i+s}
\geq A^{s}_iCS^{k}R[A^\gamma_{i+s}]A^{t}_{i+s}
= CS^{\gamma k+s+t}R[A]_i.
\end{equation*}
In the above, we select $s$ in such a way that 
$A^{\gamma}_{i+s}$ has size $m=\left\lfloor\frac{d}{\gamma}\right\rfloor$.
Applying Lemma~\ref{l:T1N} again, we obtain
that
$$T_{1,\nacht}(A)\le \gamma \max_{A^\gamma_{i}\textnormal{ has size }m} T_{1,N}(A^\gamma_{i}) +d\rem \gamma.$$
Using Wielandt and Dulmage-Mendelsohn bounds for such blocks together with 
Proposition~\ref{p:cyclesAgamma} we obtain that 
\begin{equation*}
\begin{split}
T_{1,\nacht}(A)&\le \gamma\wiel\left(\left\lfloor\frac{d}{\gamma}\right\rfloor\right)+ d\rem \gamma,\\   
 T_{1,\nacht}(A)&\le \gamma\left( \frac{\girth}{\gamma} \left(\left\lfloor\frac{d}{\gamma}\right\rfloor-2\right)+ \left\lfloor \frac{d}{\gamma}\right\rfloor  \right) +d\rem\gamma\\
 &=\girth\cdot\left(\left\lfloor\frac{d}{\gamma}\right\rfloor-2\right)+ d,     
\end{split}    
\end{equation*}
which concludes the proof.
\end{proof}

\subsection{Using the tropical factor rank}

\label{ss:NachtFactorRank}

Let us first introduce the definition of factor rank and some 
relevant notation.
\begin{definition}
\label{def:frank}
Let $A\in\Rmax^{n\times m}$. The {\em (tropical) factor rank $r$} of $A$ is the smallest $r\in \mathbb{N}$ such that $A=U\otimes L$ where $U\in\Rmax^{n\times r}$ and $L\in\Rmax^{r\times m}$.

We also introduce the following notations:
\begin{equation*}
\check{A}=L\otimes U,\quad 
F=
\begin{pmatrix}
-\infty & U\\
L & -\infty 
\end{pmatrix}
\end{equation*}
\end{definition}

\begin{theorem}\label{th:rank} Let $A\in\Rmax^{d\times d}$ be irreducible. Let~$r$ be the factor rank of~$A$, $\gamma$ the
cyclicity of $\digr(A)$, and $\girth$ the max-girth of~$\crit(A)$. 
The following upper bounds on $T_{1,\nacht}(A)$ hold:
\begin{itemize}
\item[{\rm (i)}] $\displaystyle
\wiel\left(r\right) + 1$;
\item[{\rm (ii)}] $
\DM(\girth,r)+1=\displaystyle\girth \left(r-2\right)+ r +1$.
\item[{\rm (iii)}] $\sch(\gamma,r)+1=\displaystyle
\gamma \wiel\left(\left\lfloor\frac{r}{\gamma}\right\rfloor\right)
+ (r\rem \gamma) + 1$;
\item[{\rm (iv)}] $\kim(\gamma,\girth,r)+1=\displaystyle
\girth \left(\left\lfloor\frac{r}{\gamma}\right\rfloor-2\right)+ r + 1$.
\end{itemize}
The first two bounds apply to reducible matrices as well.
\end{theorem}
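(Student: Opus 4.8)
The plan is to reduce the factor-rank bounds to the already-established dimension-based bounds (Wielandt, Dulmage-Mendelsohn, Schwarz, Kim) applied to the smaller matrix $\check{A}=L\otimes U\in\Rmax^{r\times r}$, paying only the advertised penalty of $+1$. The key structural fact I would exploit is the relationship between the powers of $A=U\otimes L$ and the powers of $\check{A}=L\otimes U$: one has $A^{t+1}=U\check{A}^{t-1}L$ for $t\geq 1$ (since $A^{t}=(UL)^t=U(LU)^{t-1}L=U\check{A}^{t-1}L$), and, dually, $\check{A}^{t+1}=L A^{t-1}U$. First I would record these two identities carefully, as they are the engine of the whole argument. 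They say that, up to multiplication by the fixed factors $U$ and $L$ on the outside, the sequences $\{A^t\}$ and $\{\check{A}^t\}$ are essentially the same sequence shifted by one; the single shift is exactly where the $+1$ penalty comes from.

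Next I would transfer the weak CSR expansion of $\check{A}$ to $A$ through these identities. Concretely, I would argue that $\lambda(A)=\lambda(\check{A})$, that the critical graph of $A$ corresponds to that of $\check{A}$ in the relevant sense, and that the CSR data of $A$ is obtained from that of $\check{A}$ by the same outer multiplication by $U$ and $L$. Using Lemma~\ref{l:T1N}, which characterizes $T_{1,\nacht}$ purely through the inequality $A^t\geq CS^tR[A]$, the task becomes: show that if $\check{A}^{t}\geq CS^{t}R[\check{A}]$ (i.e.\ $t\geq T_{1,\nacht}(\check{A})$), then $A^{t+1}\geq CS^{t+1}R[A]$. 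This should follow by applying $U\cdot(-)\cdot L$ to the inequality for $\check{A}$ and using $A^{t+1}=U\check{A}^{t-1}L$ together with the transfer of the CSR terms. The conclusion is $T_{1,\nacht}(A)\leq T_{1,\nacht}(\check{A})+1$. Then I would simply feed the four dimension-based bounds of Theorem~\ref{th:KimSchwarzNacht} and of \cite[Theorem 4.1]{wCSR}, applied to $\check{A}$ (which has dimension $r$), to get bounds (i)--(iv); here I must also check that the cyclicity $\gamma$ and max-girth $\girth$ relevant to $\check{A}$ agree with those of $A$, which is plausible since the nonzero cycles of $\check{A}$ correspond to cycles of $A$ via the same $UL/LU$ duality.

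The main obstacle I anticipate is precisely this transfer of the CSR decomposition and the critical-graph/cyclicity/girth data from $\check{A}$ to $A$. The identity $A^{t+1}=U\check{A}^{t-1}L$ handles the matrix powers cleanly, but $C$, $S$, $R$ are defined via the Kleene star of $(\lambda^-A)^\sigma$ (see~\eqref{csrdef}), and one must verify that the limit objects $CS^tR[A]$ and $CS^tR[\check{A}]$ satisfy the analogous relation $CS^{t+1}R[A]=U\,CS^{t-1}R[\check{A}]\,L$. The most robust route is to pass to limits using Lemma~\ref{l:CSR-limit}(i): since $CS^tR[A]=\lim_{k\to\infty}A^{t+\sigma k}$ (after normalizing $\lambda(A)=0$), and the outer factors $U,L$ are fixed, continuity of the tropical operations gives
\begin{equation*}
CS^{t+1}R[A]=\lim_{k\to\infty}A^{t+1+\sigma k}=\lim_{k\to\infty}U\check{A}^{t-1+\sigma k}L=U\,CS^{t-1}R[\check{A}]\,L,
\end{equation*}
provided $\sigma$ is a common period for both sequences. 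Establishing that the cyclicities of $\crit(A)$ and $\crit(\check{A})$ coincide (so that the same $\sigma$ works) is the delicate combinatorial point, but it should reduce to the cycle correspondence under the $UL/LU$ duality. Once the limit identity is in hand, the inequality transfer and the final substitution of the four numerical bounds are routine. The remark that (i) and (ii) extend to reducible matrices follows because Wielandt's and Dulmage-Mendelsohn's bounds, unlike those of Schwarz and Kim, do not rely on the single-cyclic-class block decomposition and hence survive the absence of irreducibility.
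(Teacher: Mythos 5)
Your proposal is correct and is essentially the paper's own argument: the paper packages your reduction $T_{1,\nacht}(A)\le T_{1,\nacht}(\check{A})+1$ by applying Lemma~\ref{l:Aijgamma}(ii) to the block matrix $F$ of Definition~\ref{def:frank} (whose square is block diagonal with blocks $A$ and $\check{A}$), and unwinding that lemma for $F$ with $\gamma=2$ gives exactly your computation --- transfer of the CSR terms by limits (Lemma~\ref{l:CSR-limit}), multiplication of $\check{A}^t\ge CS^tR[\check{A}]$ by $U$ and $L$, and Lemma~\ref{l:T1N} --- while the cyclicity/girth correspondence between $A$ and $\check{A}$ that you flag as the delicate point is exactly what Proposition~\ref{p:cyclesAgamma}, applied to $F$, supplies. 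One slip to correct: the identity should read $A^{t+1}=U\check{A}^{t}L$ (equivalently $A^{t}=U\check{A}^{t-1}L$, as in your own parenthetical derivation), not $A^{t+1}=U\check{A}^{t-1}L$; with the latter version, which also appears in your displayed limit identity, the argument would only yield $T_{1,\nacht}(A)\le T_{1,\nacht}(\check{A})+2$ rather than the claimed $+1$.
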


\begin{proof}
By definition of the factor rank, we have $A=UL$ for some~$U\in\Rmax^{d\times r}$ and~$L\in\Rmax^{r\times d}.$


For $F$ defined as in Definition~\ref{def:frank} we obtain
$$F^2=\left(\begin{array}{cc}
 A  &-\infty\\
-\infty & \check{A}
\end{array}\right).$$ 
Lemma~\ref{l:Aijgamma} part (ii)
applied to~$F$ with~$\gamma=2$ gives~$T_{1,\nacht}(A)\le T_{1,\nacht}(\check{A})+1$,
and Theorem~\ref{th:rank} follows since the bounds 
$\wiel(r),$ $\DM(\girth,r),$ $\sch(\gamma,r)$ and $\kim(\gamma,\girth,r)$
apply to matrix~$\check{A}=LU$ with size~$r$.
\end{proof}

\section{Cycle Removal and Walk Reduction Threshold}
\label{s:Thresh}

Having obtained the Kim and Schwarz bounds and their factor rank improvements in the case of the Nachtigall scheme, we would like to achieve a similar progress but for the case where $B$ is defined using the Hartmann-Arguelles or the Cycle Threshold scheme. To this aim, in this section we recall the cycle removal threshold ($T_{cr}$) introduced in~\cite{wCSR}. We also introduce a new notion of the walk reduction threshold ($T_{wr}$), which is later used to work with the factor rank, in the new bounds on $T_2(A,B)$. Proposition~\ref{p:TcRLinGen}, Corollary~\ref{c:TcRLin} and Proposition~\ref{p:TcRn} offer new improved bounds on $T_{cr}$
that make use of the cyclicity, and  Proposition~\ref{p:Twr} offers bounds on the walk reduction threshold, and in particular, such bounds that involve the factor rank.

\subsection{Two related notions}
\label{ss:TcrTwr}

In~\cite{wCSR}, we introduced the following definition
\begin{definition}\label{def:Tcr}
Let~$\digr$ be a subgraph of~$\digr(A)$ and~$ \sigma \in\Nat$.

 The {\em cycle removal threshold}~$T_{cr}^ {\sigma}(A,\subcrit)$,
 of~$\subcrit$ is
the smallest nonnegative integer~$T$ for which the following
holds: for each walk~$W\in\walksnode{i}{j}{\subcrit}$ with
length~$\geq T$, there is a walk
$V\in\walksnode{i}{j}{\subcrit}$ obtained from~$W$ by removing
cycles and possibly inserting cycles
of~$\subcrit$ such that $l(V)\equiv_{\sigma} l(W)$ and
$l(V)\le T$.
\end{definition}

However, the following version of the former definition is also natural:

\begin{definition}\label{def:Twr}
Let~$\digr$ be a subgraph of~$\digr(A)$ and~$ \sigma \in\Nat$.

 The {\em walk reduction threshold}~$T_{wr}^ {\sigma} (A,\subcrit)$,
 of~$\subcrit$ is
the smallest nonnegative integer~$T$ for which the following
holds: for each walk~$W\in\walksnode{i}{j}{\subcrit}$ with
length~$\geq T$, there is a walk
$V\in\walksnode{i}{j}{\subcrit}$ such that $p(V)\geq p(W)$, $l(V)\equiv_{\sigma} l(W)$ and
$l(V)\le T$.
\end{definition}

Note that $T_{cr}$ depends only on the unweighted digraph supporting~$\digr(A)$ and is independent of the entries of~$A$,
while~$T_{wr}$ depends on the entries. 

When $\lambda(A)=0$, all closed walks have non positive weights, while all critical closed walks have weight zero. Therefore, removing cycles and possibly inserting cycles as in Definition~\ref{def:Tcr} can only result in a walk with the same or a bigger weight. Thus, we have:

\begin{proposition}
If $\lambda(A)=0$ and $\subcrit$ is a subgraph of~$\crit(A)$, then $T^ {\sigma} _{wr}(A,\subcrit)\le T^ {\sigma} _{cr}(A,\subcrit)$ for any $\sigma\in\Nat$.
\end{proposition}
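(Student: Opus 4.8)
The plan is to show that the integer $T := T^\sigma_{cr}(A,\subcrit)$ already satisfies the property that defines the walk reduction threshold; since $T^\sigma_{wr}(A,\subcrit)$ is by definition the smallest nonnegative integer with that property, the inequality $T^\sigma_{wr}(A,\subcrit)\le T^\sigma_{cr}(A,\subcrit)$ follows immediately. Concretely, I would fix an arbitrary walk $W\in\walksnode{i}{j}{\subcrit}$ with $l(W)\ge T$ and produce a walk $V\in\walksnode{i}{j}{\subcrit}$ meeting all three requirements of Definition~\ref{def:Twr}: namely $p(V)\ge p(W)$, $l(V)\equiv_\sigma l(W)$, and $l(V)\le T$.

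For this I would invoke the definition of $T^\sigma_{cr}$ directly. Since $l(W)\ge T$, there exists $V\in\walksnode{i}{j}{\subcrit}$ obtained from $W$ by removing cycles and possibly inserting cycles of $\subcrit$, with $l(V)\equiv_\sigma l(W)$ and $l(V)\le T$. Two of the three conditions of Definition~\ref{def:Twr} are thus automatic, and the entire argument reduces to a single weight-monotonicity check, namely verifying $p(V)\ge p(W)$ for the walk $V$ produced by the cycle-removal/insertion process.

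To carry out that check I would decompose the transformation from $W$ to $V$ into its elementary steps and track the effect of each on the total weight, using that $p$ is additive under concatenation of walks. A cycle removed from $W$ is a closed walk, and since $\lambda(A)=0$ every closed walk has non-positive weight; deleting such a closed sub-walk therefore adds the negative of a non-positive quantity to the total, so it can only increase the weight. A cycle inserted belongs to $\subcrit\subseteq\crit(A)$, hence is a critical closed walk, which with $\lambda(A)=0$ has weight exactly $0$ (the observation recorded just before the statement); inserting it leaves the total weight unchanged. Summing the contributions over all elementary steps yields $p(V)\ge p(W)$, which is precisely the missing condition.

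I do not expect a genuine obstacle here, as the statement is essentially a formalization of the weight remark preceding it. The only point requiring care is the bookkeeping: being explicit that the transformation of Definition~\ref{def:Tcr} consists solely of removals of (necessarily non-positive-weight) closed walks and insertions of (necessarily zero-weight) critical cycles, so that no single step can decrease the weight. Once this is spelled out, the monotonicity $p(V)\ge p(W)$ is immediate, $T^\sigma_{cr}(A,\subcrit)$ is seen to satisfy the defining property of $T^\sigma_{wr}$, and the proof closes.
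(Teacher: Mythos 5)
Your proposal is correct and follows essentially the same argument as the paper: since $\lambda(A)=0$ makes every closed walk have non-positive weight and every cycle of $\subcrit\subseteq\crit(A)$ have weight zero, the cycle-removal/insertion process of Definition~\ref{def:Tcr} can only preserve or increase the weight, so any $T$ witnessing the $T_{cr}$ property also witnesses the $T_{wr}$ property. Your write-up simply makes explicit the bookkeeping that the paper compresses into the two sentences preceding the proposition.
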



\subsection{Bounds on the Cycle Removal Threshold}
In this section we prove some new bounds on the Cycle Removal Threshold to be used throughout the paper. One of the starting points is the following bound established in~\cite{wCSR}:

\begin{proposition}[\cite{wCSR}, Proposition 9.5]\label{p:TcRLin}
Let $A\in\Rmax^{d\times d}$ and $\subcrit$ be a subgraph of~$\digr(A)$ with~$d_1$ nodes. Then
$$\forall  \sigma \in\Nat, T_{cr}^ \sigma (A,\subcrit)\le  \sigma  d +d-d_1-1.$$
\end{proposition}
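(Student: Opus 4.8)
The plan is to reduce an arbitrary long walk through $\subcrit$ to a short one in two stages: first collapse it to a path-like ``core'' that still meets $\subcrit$, and then spend the remaining length budget to repair the length modulo $\sigma$.

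First I would fix a walk $W\in\walksnode{i}{j}{\subcrit}$ with $l(W)\ge T:=\sigma d+d-d_1-1$, let $u$ be the first node of $\subcrit$ visited by $W$, and split $W=W'W''$ at $u$. Removing cycles from $W'$ turns it into a path $i\to u$; since all nodes strictly before $u$ lie outside $\subcrit$, this path meets at most the $d-d_1$ non-$\subcrit$ nodes together with $u$, so it has length at most $d-d_1$. Likewise $W''$ reduces to a path $u\to j$ of length at most $d-1$. The resulting core $V_0\in\walksnode{i}{j}{\subcrit}$ (it still passes through $u$) then satisfies $l(V_0)\le 2d-d_1-1$.

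The cycles deleted in this reduction are simple, hence of length at most $d$, and their total length is $L=l(W)-l(V_0)\ge(\sigma-1)d$, precisely because $l(W)\ge T$. The idea is now to reinstate just enough of these cycles so that the final length is $\equiv l(W)\pmod\sigma$ and still $\le T$. Writing the deleted cycle lengths as $c_1,\dots,c_m\in\{1,\dots,d\}$, I want a sub-multiset with length-sum $\equiv L\pmod\sigma$ and length-sum $\le(\sigma-1)d$; keeping exactly these cycles (and deleting the rest) then yields a walk $V$ with $l(V)\le l(V_0)+(\sigma-1)d\le T$ and $l(V)\equiv l(V_0)+L=l(W)\pmod\sigma$. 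Such a sub-multiset exists by an iterated pigeonhole on the prefix sums of the $c_k$ modulo $\sigma$: whenever at least $\sigma$ of the cycles are still present, two prefix sums coincide mod $\sigma$, so the intervening block sums to $0$ mod $\sigma$ and can be discarded; trimming in this way leaves at most $\sigma-1$ cycles realising the residue $L\bmod\sigma$, whose total length is therefore at most $(\sigma-1)d$.

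The main obstacle is not the arithmetic but the bookkeeping of the cycle operations: I must ensure that the sub-multiset of cycles I choose to keep actually reassembles into a genuine walk that still meets $\subcrit$, i.e. that the kept cycles remain attached to the core, since a cycle nested inside a deleted one cannot be carried along. This is exactly where the allowance to remove cycles from $W$ (and, if necessary, to insert cycles of $\subcrit$) must be exploited carefully, for instance by ordering the deletions leaf-first and only detaching cycles whose removal keeps the remaining structure connected to $V_0$. The slack $(\sigma-1)d$ built into $T$ is precisely the length budget consumed by this residue correction, and the boundary case $l(W)=T$ (where $V=W$ already works) shows the bound is attained without waste.
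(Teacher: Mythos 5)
Your plan reproduces the right arithmetic (core of length at most $2d-d_1-1$ through the first $\subcrit$-node $u$, plus a kept sub-multiset of cycles of total length at most $(\sigma-1)d$ with the right residue), but the step you yourself flag as ``bookkeeping'' is a genuine gap, and it cannot be repaired by a leaf-first ordering of deletions. When a walk is reduced to its core, the deleted elementary cycles form a nested (laminar) family: a cycle attached at a node that occurs only inside another deleted cycle can be kept only if that outer cycle is kept too. So the families you are allowed to keep are exactly the ancestor-closed subfamilies of the nesting forest, whereas your pigeonhole on prefix sums selects an arbitrary sub-multiset. These two requirements are incompatible in general: for a nested chain $C_1\supset C_2\supset\cdots\supset C_m$ with lengths $\sigma,\sigma,\ldots,\sigma,1$ (outermost to innermost), the keepable families are the prefixes $\{C_1,\ldots,C_k\}$, and keeping a prefix changes the total length by the complementary suffix sum, which is $\equiv 1 \pmod{\sigma}$ for every $k<m$; hence no admissible choice realizes the needed residue except ``keep everything'', which does not shorten the walk. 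The allowance to insert cycles of $\subcrit$ does not rescue this either, since in the generality of the statement $\subcrit$ need not contain any cycle. So, as written, your argument only covers walks whose cycle structure is shallow; you would have to prove that bad nesting patterns cannot occur in long walks, which you do not.

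For comparison: this paper does not prove the proposition (it quotes it from \cite{wCSR}), and the known proof avoids the nesting problem altogether by grouping repetitions by \emph{node} rather than by cycle. Protect the first occurrence of $u$. For a fixed node $v$, the segments of $W$ between consecutive occurrences of $v$ are pairwise disjoint contiguous closed subwalks, so \emph{any} subset of them can be removed simultaneously, whatever the nesting; moreover at most one such segment contains the protected occurrence of $u$ in its interior, and none does when $v$ is a node of $\subcrit$ (all occurrences of $\subcrit$-nodes lie at or after $u$). Whenever some node has at least $\sigma$ removable segments, Lemma~\ref{l:HA} applied to their lengths produces a nonempty subcollection of total length $\equiv 0 \pmod{\sigma}$; remove it and iterate. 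Each step preserves the endpoints, the residue of the length modulo $\sigma$, and the passage through $u$, and strictly decreases the length, so the process terminates; at termination every node of $\subcrit$ occurs at most $\sigma$ times and every other node at most $\sigma+1$ times, giving length at most $\sigma d_1+(\sigma+1)(d-d_1)-1=\sigma d+d-d_1-1$, exactly the claimed bound. Your decomposition at the first $\subcrit$-node is the right first move, but the selection of what to discard must be organized around repeated occurrences of single nodes, not around the elementary cycles of a full core reduction.
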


We now develop  this bound for the case when $\digr(A)$ has a nontrivial cyclicity 
$\gamma$.

\begin{proposition}\label{p:TcRLinGen}
Let $A\in\Rmax^{d\times d}$ be irreducible and let
$\subcrit$ be a strongly connected subgraph of~$\digr(A)$. Then
\begin{equation}\label{e:TcrKimGen}
 T_{cr}^{\sigma}(A,\subcrit)\le  \sigma\left\lfloor \frac{d}{\gamma}\right\rfloor +d-\sigma-1,
\end{equation}
where $\gamma$ is the cyclicity of $\digr(A)$ and $\sigma$ is the cyclicity of $\subcrit$.
\end{proposition}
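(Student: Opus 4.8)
The plan is to reduce the general statement of Proposition~\ref{p:TcRLinGen} to Proposition~\ref{p:TcRLin} by passing to the $\gamma$-th tropical power and working on a single diagonal block $A^\gamma_i$. The starting observation is that, because $\digr(A)$ is strongly connected with cyclicity $\gamma$, every walk respects the cyclic-class structure described in Section~2.2: a walk of length $t$ from $i$ to $j$ corresponds, after fixing the cyclic class of its endpoints, to a walk on the diagonal block digraph $\digr(A^\gamma_{i_0})$ of length $\lfloor t/\gamma\rfloor$ (up to a bounded remainder of length $<\gamma$). The subgraph $\subcrit$ is strongly connected with cyclicity $\sigma$, and since every cycle of $\subcrit$ has length divisible by $\gamma$ (cycles in a strongly connected digraph of cyclicity $\gamma$ have length a multiple of $\gamma$), $\sigma$ is itself a multiple of $\gamma$. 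Thus $\subcrit$ descends to a strongly connected subgraph $\subcrit'$ of the block digraph $\digr(A^\gamma_{i_0})$ with cyclicity $\sigma/\gamma$.

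First I would set up this correspondence carefully: given a walk $W\in\walksnode{i}{j}{\subcrit}$ of length $t\ge T$, I split off an initial and final segment of total length $r:=t\rem\gamma$ (or arrange the endpoints to lie in matching cyclic classes) so that the bulk of $W$ is a walk in the block digraph $\digr(A^\gamma_{i_0})$ for the appropriate index $i_0$, of length $(t-r)/\gamma$, passing through $\subcrit'$. The block digraph has at most $\lfloor d/\gamma\rfloor$ nodes, and I would apply Proposition~\ref{p:TcRLin} to $A^\gamma_{i_0}$ with its subgraph $\subcrit'$, whose cyclicity is $\sigma/\gamma$. That proposition yields a reduced walk $V'$ in the block digraph, obtained by removing cycles and inserting cycles of $\subcrit'$, with the same length modulo $\sigma/\gamma$ and length at most
$$
(\sigma/\gamma)\left\lfloor\frac{d}{\gamma}\right\rfloor + \left\lfloor\frac{d}{\gamma}\right\rfloor - d_1' - 1,
$$
where $d_1'$ is the number of nodes of $\subcrit'$. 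Lifting $V'$ back to a walk $V$ on $\digr(A)$ multiplies its length by $\gamma$ and reattaches the remainder segment of length $r<\gamma$; the cycles removed and inserted lift to cycles of $\digr(A)$ and $\subcrit$ respectively, and the congruence modulo $\sigma/\gamma$ lifts to congruence modulo $\sigma$.

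The final step is the arithmetic bookkeeping to check the lifted bound matches $\sigma\lfloor d/\gamma\rfloor + d - \sigma - 1$. Multiplying the block bound by $\gamma$ gives $\sigma\lfloor d/\gamma\rfloor + \gamma\lfloor d/\gamma\rfloor - \gamma d_1' - \gamma$, and then adding the remainder $r\le\gamma-1$ and using $\gamma\lfloor d/\gamma\rfloor\le d$ together with $d_1'\ge 1$ should absorb everything into $\sigma\lfloor d/\gamma\rfloor + d - \sigma - 1$; I would track the $-1$ constant through these inequalities, since that is where an off-by-one error is most likely to creep in. I expect the main obstacle to be the bookkeeping at the endpoints: Proposition~\ref{p:TcRLin} is stated for walks between \emph{fixed} endpoints passing through $\subcrit$, but after the remainder split the effective endpoints in the block digraph depend on the cyclic classes of $i$ and $j$, so I must verify that the removal-and-insertion operation guaranteed in the block lifts to a legitimate operation on $W$ of the type required by Definition~\ref{def:Tcr} (removing cycles of $\digr(A)$, inserting cycles of $\subcrit$) without disturbing the attached remainder segments, and that the resulting length is controlled uniformly over the choice of $i_0$. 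Handling the degenerate cases where $\subcrit$ is a single node ($\sigma$ undefined or $=\gamma$) or where $\gamma\nmid\sigma$ fails would also need a brief separate check, but the cyclicity structure of strongly connected digraphs rules the latter out.
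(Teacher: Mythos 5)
Your overall strategy (reduce to Proposition~\ref{p:TcRLin} by passing to the diagonal blocks of $A^\gamma$) is exactly the paper's, but the execution has two genuine gaps, both at points you flagged only as ``bookkeeping.'' First, your claim that \emph{the block digraph has at most $\left\lfloor d/\gamma\right\rfloor$ nodes} is false for an arbitrary block: the cyclic classes partition the $d$ nodes into $\gamma$ classes, so only the \emph{smallest} class is guaranteed to have size $m\le\left\lfloor d/\gamma\right\rfloor$ (e.g.\ $d=5$, $\gamma=2$ can give classes of sizes $1$ and $4$). Your splitting rule --- peel off a prefix/suffix of total length $t\rem\gamma$ --- lands the bulk walk in whatever class the walk happens to start near, with no control over its size. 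The paper's proof instead decomposes $W=W_1VW_2$ at the first and last occurrences of a node of the smallest class $C_k$, which is why it gets $m\le\left\lfloor d/\gamma\right\rfloor$. But this choice costs more at the ends: $l(W_1)$ and $l(W_2)$ can each be up to $\gamma-1$, for a total of $2(\gamma-1)$ rather than your $\gamma-1$. When $m<\left\lfloor d/\gamma\right\rfloor$ the slack $\gamma\left\lfloor d/\gamma\right\rfloor-\gamma m\ge\gamma$ absorbs this; when $m=\left\lfloor d/\gamma\right\rfloor$ there is no slack, and the paper needs the sharper bound $l(W_1)\le d\rem\gamma$, which follows from $C_k$ being the \emph{first} class of size $m$ encountered (all classes visited before it have size $\ge m+1$, and there are at most $d\rem\gamma$ of those). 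This two-case analysis is not optional; without it the constant in~\eqref{e:TcrKimGen} is not reached.

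Second, your final arithmetic uses $d_1'\ge 1$, which only yields $\sigma\left\lfloor d/\gamma\right\rfloor+d-\gamma-1$. Since $\sigma$ is a multiple of $\gamma$ and in general $\sigma>\gamma$, this is strictly weaker than the claimed $\sigma\left\lfloor d/\gamma\right\rfloor+d-\sigma-1$. You need $d_1'\ge\sigma/\gamma$, which holds because $\subcrit'$ contains a cycle, every cycle of $\subcrit'$ has length at least its cyclicity $\sigma/\gamma$, and a cycle of length $\sigma/\gamma$ has that many distinct nodes; the paper states this explicitly ($d_1\ge l(Z)/\gamma\ge\sigma/\gamma$). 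A smaller but also real issue: for Proposition~\ref{p:TcRLin} to apply in the block, the block walk must pass through a node of $\subcrit'$, i.e.\ a node of $\subcrit$ lying in the chosen class; the node of $\subcrit$ that $W$ visits may lie in a different class, so you must first insert cycles of $\subcrit$ into $W$ (as the paper does) to force it to visit nodes of $\subcrit$ in every class. Your reduction of the congruence ($\bmod\ \sigma/\gamma$ in the block lifting to $\bmod\ \sigma$ upstairs) and the observation that $\gamma\mid\sigma$ are both correct.
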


\begin{proof}
 Let $m$ be the size of the smallest cyclic class of~$\digr(A)$.

Let us consider a walk~$W\in\walkslennode{i}{j}{t,\sigma}{\subcrit}$.
If~$W$ does not go through all nodes of~$\subcrit$, then we can insert cycles from $\subcrit$ in it so that the new walk contains all nodes of $\subcrit$ and still belongs to 
$\walkslennode{i}{j}{t,\sigma}{\subcrit}$.

Let~$C_k$ be the first cyclic class of size~$m$ encountered by $W$.
The digraph $\digr(A^{\gamma})$ consists of $\gamma$ isolated strongly connected components, 
whose node sets are the cyclic classes of $\digr(A)$. Denote by $A^{\gamma}_k$ the submatrix 
of $A^{\gamma}$ whose node set is $C_k$.
Let us call $\subcrit_k^{\gamma}$ the digraph which consists 
of all nodes and arcs of $\subcrit^{\gamma}$ that belong to $\digr(A^{\gamma}_k)$.

Then, $W$ can be decomposed into $W=W_1VW_2$ where~$W_1$ has only its end node in~$C_k$ and~$W_2$ only its start node.
By construction, there is a walk~$\tilde{V}$ on~$\digr(A_k^{\gamma})$ with same start and end node as~$V$ and $l(V)=\gamma l(\tilde{V})$. As $W$ goes through all nodes of $\subcrit$, $\tilde{V}$ goes through all (and hence some) nodes of $\subcrit_k^{\gamma}$.

Applying Proposition~\ref{p:TcRLin} to~$\tilde{V}$ and the subgraph $\subcrit_k^{\gamma}$ of~$\digr(A_k^{\gamma})$, we build a walk~$\tilde{V_1}$ with length 
at most~$\frac{\sigma}{\gamma} m +m -d_{1}-1$, where $d_1$ is the number of nodes in $\subcrit_k^{\gamma}$ and 
$l(\tilde{V_1})\equiv_{\frac{\sigma}{\gamma}} l(\tilde{V})$. 
As $d_1\geq l(Z)/\gamma\geq\sigma/\gamma$ where $Z$ is any cycle of $\subcrit$, we also have
$l(\tilde{V_1})\leq \frac{\sigma}{\gamma} m +m -\frac{\sigma}{\gamma}-1$.
This walk can be developed into a walk~$V_2$ on~$\digr(A)$ with length 
at most~$\sigma m +\gamma m -\sigma - \gamma$ and such that 
$l(V_2)\equiv_{\sigma} l(V)$. To bound $l(W_1V_2W_2)$, we consider two cases.

If $m< \left\lfloor \frac{d}{\gamma}\right\rfloor $, we just use that~$l(W_1)\le\gamma-1$ and $l(W_2)\le \gamma-1$ to get
$$l(W_1V_2W_2)\le 2\left(\gamma -1\right) + \left(\gamma +\sigma\right) \left(\left\lfloor \frac{d}{\gamma}\right\rfloor -1\right) -\sigma - \gamma <
\sigma\left\lfloor \frac{d}{\gamma}\right\rfloor -\sigma +d -1.$$

If $m= \left\lfloor \frac{d}{\gamma}\right\rfloor$, we use that~$l(W_2)\le \gamma-1$ and~$l(W_1)\le d \rem \gamma$ to get
$$l(W_1V_2W_2)\le \left(\gamma -1\right) + d \rem \gamma + \left(\gamma +\sigma\right) \left\lfloor \frac{d}{\gamma}\right\rfloor -\sigma - \gamma
= \sigma\left\lfloor \frac{d}{\gamma}\right\rfloor -\sigma +d -1.$$

Thus, we proved~\eqref{e:TcrKimGen}.

\end{proof}

When the subgraph $\subcrit$ is a cycle we obtain the following result:

\begin{corollary}\label{c:TcRLin}
For $A\in\Rmax^{d\times d}$ and $Z$ a cycle of~$\digr(A)$, we have:
\begin{equation}\label{e:TcrKim}
T_{cr}^{l(Z)}(A,\cycle)\le  l(\cycle)\left\lfloor \frac{d}{\gamma}\right\rfloor +d-l(\cycle)-1.
\end{equation}
\end{corollary}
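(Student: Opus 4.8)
The plan is to derive the corollary as the special case $\subcrit=\cycle$ of Proposition~\ref{p:TcRLinGen}. First I would verify that the hypotheses of that proposition are met for $\subcrit=\cycle$: a cycle $\cycle$, viewed as a subgraph of $\digr(A)$, is strongly connected, since one reaches any of its nodes from any other simply by traversing the cycle. Hence Proposition~\ref{p:TcRLinGen} applies with $\subcrit=\cycle$.

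The only step requiring a word is the identification of the cyclicity $\sigma$ of $\cycle$. By the definition recalled in Section~\ref{s:prel}, the cyclicity of a strongly connected digraph is the greatest common divisor of the lengths of all its cycles; but the subgraph $\cycle$ contains exactly one cycle in the paper's sense (a closed walk with no repeated nodes), namely $\cycle$ itself of length $l(\cycle)$, so $\sigma=l(\cycle)$. Substituting $\sigma=l(\cycle)$ into the bound $\sigma\lfloor d/\gamma\rfloor+d-\sigma-1$ supplied by Proposition~\ref{p:TcRLinGen} yields precisely $l(\cycle)\lfloor d/\gamma\rfloor+d-l(\cycle)-1$, which is~\eqref{e:TcrKim}.

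I expect no genuine obstacle here: the entire argument is carried out in Proposition~\ref{p:TcRLinGen}, and this corollary merely records the instance in which the strongly connected subgraph is a single cycle. The sole point worth a remark is the reducible case: if $\digr(A)$ fails to be irreducible, every walk relevant to $T_{cr}^{l(\cycle)}(A,\cycle)$ passes through a node of $\cycle$ and therefore stays within the strongly connected component of $\digr(A)$ containing $\cycle$, so one applies Proposition~\ref{p:TcRLinGen} inside that component, with $\gamma$ its cyclicity, and the stated inequality follows.
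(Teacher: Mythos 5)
Your main derivation coincides with the paper's: Corollary~\ref{c:TcRLin} is indeed just Proposition~\ref{p:TcRLinGen} specialized to $\subcrit=\cycle$, and your observation that the cyclicity of the subgraph $\cycle$ equals $l(\cycle)$ (it is the gcd of the lengths of its cycles, and $\cycle$ contains no cycle other than itself) is exactly what is needed to substitute $\sigma=l(\cycle)$ into the bound of the proposition.

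The flaw is in your final remark about reducible $A$. A walk $W\in\walksnode{i}{j}{\cycle}$ is only required to \emph{pass through} a node of $\cycle$; its endpoints and arbitrarily long portions of it may lie in other strongly connected components of $\digr(A)$, so it does not stay within the component containing $\cycle$, and Proposition~\ref{p:TcRLinGen} cannot be invoked ``inside that component''. The failure is not merely cosmetic: cycles of $W$ lying in other components must themselves be removed, and their lengths need not be compatible modulo $l(\cycle)$ with insertions of copies of $\cycle$, so the congruence condition in Definition~\ref{def:Tcr} can force long walks to survive. Concretely, let $\digr(A)$ consist of a $2$-cycle $\cycle$ on nodes $\{1,2\}$, a $3$-cycle on $\{3,4,5\}$ and a single arc from $2$ to $3$, so that $d=5$. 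The walk $1\to 2\to 3\to 4\to 5\to 3$ has odd length and passes through $\cycle$, but any walk obtained from it by removing cycles and inserting copies of $\cycle$ that keeps the length odd must retain a $3$-cycle, hence has length at least $5$; since such walks exist with arbitrarily large length, $T_{cr}^{2}(A,\cycle)\ge 5$, which exceeds the right-hand side of \eqref{e:TcrKim} under the natural reading of $\gamma$ for a reducible digraph (the lcm of the component cyclicities, here $\gamma=6$, giving the bound $2\cdot 0+5-2-1=2$). The correct resolution is simply that the corollary inherits the irreducibility hypothesis of Proposition~\ref{p:TcRLinGen} --- the paper only ever applies it to irreducible matrices --- so you should keep that hypothesis rather than attempt to remove it.
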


\if{
\begin{proof}
 Let $m$ be the size of the smallest cyclic class of~$\digr(A)$.

Let us consider a walk~$W\in\walkslennode{i}{j}{t,l(Z)}{Z}$.
If~$W$ does not go through all nodes of~$Z$, then we insert a copy of~$Z$ in it.

Let~$C_k$ be the first cyclic class of size~$m$ encountered by $W$.
Let us call $Z_k$ the cycle on~$\digr(A_k^{(\gamma)})$ corresponding to~$Z$ and containing nodes 
from $C_k$. 

Then, $W$ can be decomposed into $W=W_1VW_2$ where~$W_1$ has only end node in~$C_k$ and~$W_2$ only its start node.
By construction, there is a walk~$\tilde{V}$ on~$\digr(A_k^{(\gamma)})$ with same weight, start and end node as~$V$ and $l(V)=\gamma l(\tilde{V})$. As $W$ goes through all nodes of $Z$, $\tilde{V}$ goes through all nodes $Z_k$.

Applying Proposition~\ref{p:TcRLin} to~$\tilde{V}$ and $Z_k$ on~$\digr(A_k^{(\gamma)})$, we build a walk~$\tilde{V_1}$ with length 
at most~$\frac{l}{\gamma} m +m -\frac{l(Z)}{\gamma}-1$ and 
$l(\tilde{V_1})\equiv_{\frac{l}{\gamma}} l(\tilde{V})$, which can be developed into a walk~$V_2$ on~$\digr(A)$ with length 
at most~$l m +\gamma m -l(Z) - \gamma$ and such that 
$l(V_2)\equiv_{l} l(V)$. To bound $l(W_1V_2W_2)$, we consider two cases.

\emph{If $m< \left\lfloor \frac{d}{\gamma}\right\rfloor $}, we just use that~$l(W_1)\le\gamma-1$ and $l(W_2)\le \gamma-1$ to get
$$l(W_1V_2W_2)\le 2\left(\gamma -1\right) + \left(\gamma +l(Z)\right) \left(\left\lfloor \frac{d}{\gamma}\right\rfloor -1\right) -l(Z) - \gamma <
l\left\lfloor \frac{d}{\gamma}\right\rfloor -l(Z) +d -1$$

\emph{If $m= \left\lfloor \frac{d}{\gamma}\right\rfloor$}, we use that~$l(W_2)\le \gamma-1$ and~$l(W_1)\le d \rem \gamma$ to get
$$l(W_1V_2W_2)\le \left(\gamma -1\right) + d \rem \gamma + \left(\gamma +l(Z)\right) \left\lfloor \frac{d}{\gamma}\right\rfloor -l(Z) - \gamma
= l(Z)\left\lfloor \frac{d}{\gamma}\right\rfloor -l(Z) +d -1$$

Thus, we proved~\eqref{e:TcrKim}.

\end{proof}
}\fi

When the cycle of $\digr(A)$  has the maximal possible length,
which is $\gamma\left\lfloor \frac{d}{\gamma}\right\rfloor$,
we also need
\begin{proposition}[\cite{wCSR}]\label{p:TcrHAWielandt}
For $A\in\Rmax^{d\times d}$ and~$\cycle$ a cycle with length~$d$ of~$\digr(A)$, we have $T_{cr}^{d}(A,\cycle)\le d^2-d+1$.
\end{proposition}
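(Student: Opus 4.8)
The plan is to argue purely combinatorially, since $T_{cr}$ depends only on the unweighted digraph $\digr(A)$, which is strongly connected, has $d$ nodes, and contains the Hamiltonian cycle $\cycle$ of length $d$. Writing $\gamma$ for the cyclicity of $\digr(A)$, we have $\gamma\mid d$ because $\cycle$ is a cycle of length $d$. When $\gamma\ge 2$, Corollary~\ref{c:TcRLin} already gives $T_{cr}^{d}(A,\cycle)\le d\lfloor d/\gamma\rfloor-1=d^2/\gamma-1<d^2-d+1$, so the only case that genuinely needs a new argument is the primitive case $\gamma=1$. There, Proposition~\ref{p:TcRLin} applied to $\cycle$ (which has $d$ nodes) with $\sigma=d$ yields the baseline bound $T_{cr}^{d}(A,\cycle)\le d\cdot d+d-d-1=d^2-1$, and the real task is to shave off the remaining $d-2$.

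First I would fix a walk $W$ from $i$ to $j$ with $l(W)\ge d^2-d+1$ and note that any admissible reduction must delete cycles whose total length is $\equiv 0\pmod d$: inserting copies of $\cycle$ changes the length only by multiples of $d=l(\cycle)$, so it cannot otherwise correct the residue, and hence $l(V)\equiv l(W)\pmod d$ forces the total removed length to vanish modulo $d$. A transparent way to realise such residue-preserving deletions is a pigeonhole on the pairs $(v_t,\,t\bmod d)$ indexed by the positions $t$ of $W$: whenever two positions $a<b$ carry the same node with $a\equiv b\pmod d$, the segment between them is a closed sub-walk of length divisible by $d$ that can be excised without altering $l(W)\bmod d$. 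Iterating until all such pairs are distinct bounds the length by $d^2-1$, but this merely reproduces the baseline.

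To gain the final $d-2$, the plan is to exploit that $\cycle$ has length exactly $d$. I would instead reduce $W$ by always excising a shortest available loop, so that every excised closed sub-walk is a simple cycle of length at most $d$; the simple cycles of length exactly $d$ contribute $0$ modulo $d$, while the short ones (length $\le d-1$) carry the residue. The combinatorial heart is then to delete a sub-collection of the excised simple cycles whose total length is $\equiv 0\pmod d$ and which is as large as possible. This works by a pigeonhole/numerical argument on cycle lengths modulo $d$: as long as the retained short cycles total more than $(d-1)^2$ there are at least $d$ of them, and among any $d$ cycles some consecutive block (in a chosen ordering) sums to $0$ modulo $d$ and can be dropped. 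Repeating this leaves retained short cycles of total length at most $(d-1)^2$; together with the underlying $i$--$j$ path, of length at most $d-1$, this produces a walk $V$ with $l(V)\le (d-1)^2+(d-1)=d^2-d$ and $l(V)\equiv l(W)\pmod d$, which is within the asserted bound.

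The step I expect to be the main obstacle is making the selective removal in the previous paragraph genuinely legitimate. The simple cycles produced by a reduction need not occur as pairwise disjoint contiguous segments of $W$ (nested cycles), so one cannot simply delete an arbitrary sub-collection, nor freely reorder the excisions; the partial sums of excised lengths that one can actually realise are constrained by the order in which loops become contiguous. Reconciling the residue bookkeeping modulo $d$ with these contiguity constraints of honest cycle excision --- and thereby pinning down the sharp Wielandt-type constant, which is precisely where the Hamiltonicity of $\cycle$ enters --- is the crux of the proof, since the easy bound $d^2-1$ cannot be improved by residue-preserving excisions alone.
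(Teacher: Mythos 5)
The paper itself does not prove this proposition: it is quoted from~\cite{wCSR} (no proof is given here), so the only question is whether your blind argument is complete. It is not. Your opening reduction is fine: the existence of $\cycle$ of length $d$ forces strong connectivity and $\gamma\mid d$, and for $\gamma\ge 2$ Corollary~\ref{c:TcRLin} (whose proof rests only on Proposition~\ref{p:TcRLin}, not on the present statement, so there is no circularity) gives $d\left\lfloor d/\gamma\right\rfloor-1\le d^2/2-1<d^2-d+1$. The counting in the primitive case is also the right heuristic: at most $d-1$ retained cycles, each of length at most $d-1$ (length-$d$ cycles being residue-neutral), plus a path of length at most $d-1$, totals $d^2-d$. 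But the decisive step --- ``some consecutive block sums to $0$ modulo $d$ \emph{and can be dropped}'' --- is precisely the step that fails, and you concede this yourself in your final paragraph. Dropping an arbitrary sub-collection of peeled cycles need not leave a walk. Concretely, take $W=1\to 2\to 3\to 2\to 1\to 4$ in a digraph containing these arcs together with a Hamiltonian cycle on $\{1,2,3,4\}$: peeling yields the cycles $2\to 3\to 2$ and $1\to 2\to 1$ and the path $1\to 4$, and removing only $1\to 2\to 1$ leaves the edge multiset $\{(2,3),(3,2),(1,4)\}$, which is not a walk from $1$ to $4$. So what you have is a plan whose crucial lemma (which sub-collections of excised cycles are legitimately removable while carrying the required residue) is left unproved; as written, nothing beyond the baseline $d^2-1$ is established.

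Moreover, the hole is not cosmetic, because the standard repairs do not reach the stated constant. Applying Lemma~\ref{l:HA} only to closed subwalks between consecutive occurrences of a \emph{single} node --- the device used in this paper's own proof of Proposition~\ref{p:Twr}, exactly because removing any subset of closed subwalks based at a common node is always realizable --- requires at least $d$ such subwalks, hence walk length at least $d^2$, and iterating it yields only $d^2-1$; at length exactly $d^2-d+1$ one may have just $d-1$ subwalks whose partial sums cover all residues, and no removable zero-sum block exists. Alternatively, one can reconnect an arbitrary kept sub-collection by inserting one copy of $\cycle$ (possible at any node, since $\cycle$ is Hamiltonian), but that insertion costs $+d$ and lands at $(d-1)^2+(d-1)+d=d^2>d^2-d+1$. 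Closing the gap between these easy arguments and the sharp $d^2-d+1$ is the actual content of the proposition, which is why the paper cites the proof in~\cite{wCSR} rather than reproving it; your proposal stalls exactly where that proof does its work.
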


Let us improve this bound for the case when $\digr(A)$ has cyclicity $\gamma$.

\begin{proposition}\label{p:TcRn}
 For $A\in\Rmax^{d\times d}$ and~$\cycle$ an elementary cycle with length~$\gamma\left\lfloor \frac{d}{\gamma}\right\rfloor$ of~$\digr(A)$,
we have $T_{cr}^{\gamma \lfloor d/\gamma\rfloor}(A,\cycle)\le \gamma \left(\left\lfloor \frac{d}{\gamma}\right\rfloor-1\right)^2+\gamma+d-1$.
\end{proposition}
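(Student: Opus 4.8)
The plan is to mirror the proof of Proposition~\ref{p:TcRLinGen} (and Corollary~\ref{c:TcRLin}), but to replace the linear inner estimate of Proposition~\ref{p:TcRLin} by the Wielandt-type quadratic bound of Proposition~\ref{p:TcrHAWielandt}, applied inside a single cyclic class. Write $m=\lfloor d/\gamma\rfloor$ and note $\sigma:=l(\cycle)=\gamma m$. The decisive structural observation is that, since $\cycle$ is \emph{elementary} of the maximal length $\gamma m$, it must traverse the $\gamma$ cyclic classes of $\digr(A)$ in cyclic order and visit each of them exactly $m$ times, hence through $m$ distinct nodes. Therefore every cyclic class has size at least $m$; as the sizes sum to $d=\gamma m+(d\rem\gamma)$, the smallest class has size exactly $m$, and at most $d\rem\gamma$ classes have size strictly larger than $m$.

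First I would take a walk $W\in\walkslennode{i}{j}{t,\sigma}{\cycle}$ and, if it does not already meet every node of $\cycle$, insert a copy of $\cycle$ (which changes the length by $\sigma$ and so preserves the residue modulo $\sigma$). Let $C_k$ be the first cyclic class of size $m$ met by $W$, and let $A^\gamma_k$ be the diagonal block of $A^\gamma$ supported on $C_k$. Decomposing $W=W_1VW_2$ with $W_1$ having only its end node in $C_k$ and $W_2$ only its start node, the middle part $V$ contains every visit of $W$ to $C_k$ and hence, since $\cycle$ is Hamiltonian on $C_k$, meets all of $C_k$. Projecting $V$ onto $\digr(A^\gamma_k)$ yields a walk $\tilde V$ with the same endpoints and $l(V)=\gamma\, l(\tilde V)$; the cycle $\cycle$ projects to a cycle $\cycle_k$ of $\digr(A^\gamma_k)$ of length $m=|C_k|$, i.e.\ a \emph{Hamiltonian} cycle of the $m$-node graph $\digr(A^\gamma_k)$, and $\tilde V$ passes through its nodes.

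Next I would apply Proposition~\ref{p:TcrHAWielandt} to $\digr(A^\gamma_k)$, whose size equals the length $m$ of $\cycle_k$: this produces a walk $\tilde V_1$ with $l(\tilde V_1)\le m^2-m+1$ and $l(\tilde V_1)\equiv_m l(\tilde V)$. Developing $\tilde V_1$ back into a walk $V_2$ on $\digr(A)$ (each arc of $\digr(A^\gamma_k)$ becoming a length-$\gamma$ segment, $\cycle_k$ becoming $\cycle$) gives $l(V_2)=\gamma\, l(\tilde V_1)\le \gamma(m^2-m+1)$ and $l(V_2)\equiv_{\gamma m}l(V)$, i.e.\ $l(V_2)\equiv_\sigma l(V)$, while $V_2$ is obtained from $V$ by removing cycles and inserting copies of $\cycle$. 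Finally I would reassemble $W_1V_2W_2$ and bound its length. Since $C_k$ is the first class of minimal size, $W_1$ traverses only classes of size $>m$ before reaching $C_k$, of which there are at most $d\rem\gamma$, so $l(W_1)\le d\rem\gamma$; and $W_2$ does not return to $C_k$, so $l(W_2)\le\gamma-1$. Using $d\rem\gamma=d-\gamma m$ this yields
\begin{equation*}
l(W_1V_2W_2)\le (d\rem\gamma)+\gamma(m^2-m+1)+(\gamma-1)=\gamma(m-1)^2+\gamma+d-1,
\end{equation*}
which is the claimed bound.

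The main obstacle, and the step that makes the improvement over Corollary~\ref{c:TcRLin} possible, is the structural claim that the chosen inner block carries $\cycle_k$ as a \emph{Hamiltonian} cycle, since this is exactly what lets the modulus in Proposition~\ref{p:TcrHAWielandt} equal the block size $m$ and thus gives the $m^2-m+1$ estimate rather than the weaker linear one. The accompanying bookkeeping — that the smallest cyclic class has size exactly $m$, that at most $d\rem\gamma$ classes are larger, and hence $l(W_1)\le d\rem\gamma$ — is elementary but must be handled carefully, exactly as in Proposition~\ref{p:TcRLinGen}. I also expect to have to check, as in the earlier propositions, that the projection to $\digr(A^\gamma_k)$ and the subsequent development preserve both the residue of the length modulo $\sigma$ and the property of being obtained from $W$ by cycle removal and insertion of $\cycle$.
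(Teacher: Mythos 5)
Your proposal is correct and follows essentially the same route as the paper's own proof: the same initial observation that maximality and elementarity of $\cycle$ force the smallest cyclic class to have size exactly $m=\left\lfloor d/\gamma\right\rfloor$, the same decomposition $W=W_1VW_2$ at the first size-$m$ class, the same application of Proposition~\ref{p:TcrHAWielandt} to the Hamiltonian cycle $Z_k$ inside the block $\digr(A^\gamma_k)$, and the same bookkeeping $l(W_1)\le d\rem\gamma$, $l(W_2)\le\gamma-1$ leading to $\gamma\left(\left\lfloor d/\gamma\right\rfloor-1\right)^2+\gamma+d-1$. Your explicit justification of why $l(W_1)\le d\rem\gamma$ and why the block size equals the projected cycle length is slightly more detailed than the paper's, but the argument is identical in substance.
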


\begin{proof}
We first observe that the number of nodes in the smallest cyclic class is
$m=\left\lfloor \frac{d}{\gamma}\right\rfloor$, for otherwise we have 
$m<\left\lfloor \frac{d}{\gamma}\right\rfloor$ and in this case
there is no elementary cycle $Z$
with the length $\gamma\left\lfloor \frac{d}{\gamma}\right\rfloor$.
Indeed, such cycle would have to contain exactly 
$\left\lfloor\frac{d}{\gamma}\right\rfloor$ nodes in each cyclic class, and all these
nodes would have to be different since the cycle is elementary, in contradiction 
with $m<\left\lfloor \frac{d}{\gamma}\right\rfloor$.

So let $m=\left\lfloor \frac{d}{\gamma} \right\rfloor$ be the size of the smallest cyclic class of~$\digr(A)$.

Consider a walk~$W\in\walkslennode{i}{j}{t,l(Z)}{Z}$.
If~$W$ does not go through all nodes of~$Z$, then we insert a copy of~$Z$ in it.

Let~$C_k$ be the first cyclic class of size~$m$ encountered by $W$.
Let us call $Z_k$ the cycle on~$\digr(A_k^{\gamma})$ corresponding to~$Z$ and containing nodes 
from $C_k$. 

We decompose $W$ into $W=W_{1}VW_{2}$ where $W_{1}$ has only an end node in $C_{k}$ and $W_{2}$ has only a start node in $C_{k}$. By construction there is a walk $\tilde{V}$ on $\digr(A^{\gamma})$ with the same start and end node as $V$. Since $W$ contains all nodes of $Z$, walk $\tilde{V}$ contains all nodes of $Z_k$. Then $l(V) = \gamma l(\tilde{V})$. Applying Proposition~\ref{p:TcrHAWielandt} to~$\tilde{V}$ and $Z_k$ on~$\digr(A_k^{\gamma})$, we build a walk~$\tilde{V_1}$ with length 
$l(\tilde{V_1})\leq m^{2} - m +1$ and $l(\tilde{V_1})\equiv_m l(\tilde{V})$, which can be developed into a walk~$V_2$ on~$\digr(A)$ with length 
at most~$\gamma m^{2} -\gamma m + \gamma$ and 
$l(V_2)\equiv_{\gamma m} l(V)$. To bound $l(W_1V_2W_2)$, we can use $W_{1} \le d \rem \gamma$, $W_{2} \le \gamma - 1$, $m =\left\lfloor \frac{d}{\gamma} \right\rfloor$,
and $d=\gamma\left\lfloor\frac{d}{\gamma}\right\rfloor+ (d\rem\gamma)$ to obtain
\begin{align*}
l(W_{1}V_{2}W_{2}) & \le (\gamma - 1) + d \rem \gamma + \gamma \left\lfloor \frac{d}{\gamma} \right\rfloor^{2} - \gamma \left\lfloor \frac{d}{\gamma} \right\rfloor +\gamma \\
& = \gamma-1+d+\gamma
\left(\left\lfloor \frac{d}{\gamma} \right\rfloor-1\right)^2.
\end{align*} 

Thus, we proved the claim.
\end{proof}

\subsection{Bounds on the Walk Reduction Threshold}

Now we will obtain some bounds on the walk reduction threshold that involve the factor rank~$r$. The following elementary
number-theoretic lemma will be especially useful in what follows.
Its origins were briefly discussed by
Aigner and Ziegler~\cite{AZ:01}, p.~133. In the context of
tropical matrix powers, it was introduced by
Hartmann and Arguelles~\cite{HA-99}.
\begin{lemma}
\label{l:HA}
Let $a_1,\ldots, a_{s}\in\Z$. Then there is a nonempty subset 
$I\subseteq\{1,\ldots,s\}$ with 
$\sum_{i\in I} a_i\equiv_s 0$.
\end{lemma}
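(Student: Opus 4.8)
The plan is to prove the pigeonhole-style statement of Lemma~\ref{l:HA}: given integers $a_1,\ldots,a_s$, some nonempty subset sums to $0$ modulo~$s$. The natural tool is to look at the $s+1$ consecutive partial sums $\sigma_0=0$ and $\sigma_k=a_1+\cdots+a_k$ for $k=1,\ldots,s$, reduce them modulo~$s$, and apply the pigeonhole principle.

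\begin{proof}
Consider the $s+1$ partial sums
\begin{equation*}
\sigma_0=0,\qquad \sigma_k=\sum_{i=1}^{k} a_i\quad(1\le k\le s).
\end{equation*}
Reducing each $\sigma_k$ modulo~$s$ yields $s+1$ residues in the set $\{0,1,\ldots,s-1\}$, which has only~$s$ elements. By the pigeonhole principle there exist indices $0\le p<q\le s$ with $\sigma_p\equiv_s\sigma_q$. Then the nonempty subset $I=\{p+1,p+2,\ldots,q\}\subseteq\{1,\ldots,s\}$ satisfies
\begin{equation*}
\sum_{i\in I} a_i=\sigma_q-\sigma_p\equiv_s 0,
\end{equation*}
which proves the claim.
\end{proof}

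The only step requiring care is ensuring the subset $I$ is genuinely nonempty: this is why I would include the starting value $\sigma_0=0$, giving $s+1$ sums indexed by $0,\ldots,s$, so that the two colliding indices $p<q$ are distinct and the index set $\{p+1,\ldots,q\}$ is a nonempty contiguous block. There is no real obstacle here, as the argument is a direct and classical application of pigeonhole; the statement is deliberately elementary and is invoked later only as a number-theoretic tool for bounding the walk reduction threshold.
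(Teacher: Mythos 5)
Your proof is correct: it is the classical pigeonhole argument on the $s+1$ partial sums $\sigma_0,\ldots,\sigma_s$, and the inclusion of $\sigma_0=0$ indeed guarantees the colliding pair yields a nonempty index block. The paper gives no proof of Lemma~\ref{l:HA} at all, simply citing it as a known elementary fact (Aigner--Ziegler, and Hartmann--Arguelles), and the standard proof in those sources is precisely the one you have written, so your argument matches the intended one.
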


In what follows, it will be also quite important to lift a walk in $\digr(A)$ to a walk in $\digr(F)$ and then to pass from cycles and s.c.c. of $\crit(A)$ to the related cycles and s.c.c. of $\crit(\check{A})$ (where $F$
and $\check{A}$ are defined in Section~\ref{ss:NachtFactorRank}). We will now define the lift and the relation more formally.

Let $W=i_1\ldots i_m$ be a walk on $\digr(A)$ or on $\digr(\check{A})$. Then walk $\tilde{W}$ is called a {\em lift} of $W$ to $\digr(F)$ if $\Tilde{W}=i_1j_1\ldots j_{m-1}i_m$ is a walk on $\digr(F)$ such that $p(\Tilde{W})=p(W)$.

Let $W=i_1\ldots i_m$ with $i_1=i_m$ be a closed walk on $\digr(A)$ (resp. on $\digr(\check{A})$). 
Then a closed walk $V=j_1\ldots j_m$ with $j_1=j_m$ on $\digr(\check{A})$ (resp. on $\digr(A)$) is called {\em related} to $W$ if   $p(W)=p(V)$ and $\Tilde{W}=i_1j_1\ldots j_{m-1}i_m$ is a lift of~$W$.

Let $\digr$ be a completely reducible subgraph of $\digr(A)$ (resp. of $\digr(\check{A})$). Then, subgraph $\check\digr$ of $\digr(\check{A})$ 
(resp. of $\digr(A)$) is called   
{\em related} to $\digr$ if it consists of all nodes and arcs of all closed walks that are related to the closed walks of $\digr$.

Any closed walk related to a closed walk on $\crit(A)$ is a walk on $\crit(\check{A})$, and vice versa. Moreover, we can make the following observations, assuming without loss of generality that $\lambda(F)=\lambda(A)=\lambda(\check{A})=0$.

\begin{lemma}
\label{l:girth-related}
Let $\digr$ be a cycle $\cycle$ that has the smallest length among the cycles of the s.c.c. of $\crit(A)$ to which it belongs. Then any closed walk $\check\cycle$ related to $\cycle$ is also a cycle of the same length.
\end{lemma}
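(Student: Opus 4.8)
The plan is to work with the lift of $\cycle$ to $\digr(F)$ and to exploit the factorization $A=UL$, $\check{A}=LU$ that underlies the whole rank construction. Recall that $\cycle$ is a cycle of $\crit(A)$ of minimal length among the cycles in its s.c.c., and that a related closed walk $\check\cycle$ on $\digr(\check{A})$ is obtained by taking a lift $\tilde\cycle=i_1 j_1 \ldots j_{m-1} i_m$ of $\cycle$ to $\digr(F)$ (with $i_1=i_m$) and reading off the walk $j_1\ldots j_{m-1}$ together with the returning arc back to $j_1$. First I would make precise how lifting works: since $F^2$ is block diagonal with blocks $A$ and $\check{A}$, a closed walk of length $m$ on $\digr(A)$ lifts to a closed walk of length $2m$ on $\digr(F)$ alternating between the two node sets, and reading the walk starting one step later gives a closed walk of length $m$ on $\digr(\check{A})$. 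Because $\lambda(F)=\lambda(A)=\lambda(\check A)=0$ is assumed, and $\cycle$ is critical (weight $0$ as a cycle of $\crit(A)$), the related closed walk $\check\cycle$ also has weight $0$, hence lies on $\crit(\check A)$; this is the content of the sentence preceding the lemma.

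\textbf{The two claims to establish.} There are two things to prove: that $\check\cycle$ is a \emph{cycle} (i.e.\ it has no repeated nodes except the coinciding endpoints), and that it has \emph{the same length} as $\cycle$. The length claim is the easy half: by construction of the lift, the related closed walk has exactly the same number of arcs as $\cycle$, so $l(\check\cycle)=l(\cycle)=m$ immediately. The genuine content is that $\check\cycle$ is a cycle rather than a merely closed walk, and here the minimality hypothesis on $\cycle$ is essential. The hard part will be ruling out repeated nodes in $\check\cycle$.

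\textbf{Handling the cycle (no-repetition) claim.} I would argue by contradiction. Suppose $\check\cycle=j_1\ldots j_m$ ($j_1=j_m$) repeats a node, say $j_p=j_q$ with $p<q$ and $(p,q)\neq(1,m)$. Then $\check\cycle$ decomposes as a concatenation of two strictly shorter closed walks at that node, each of positive length less than $m$. Since $\lambda(\check A)=0$ and $\check\cycle$ has weight $0$, each of these shorter closed sub-walks has weight $\le 0$, and because they sum to $0$ each must have weight exactly $0$; thus each is a critical closed walk on $\crit(\check A)$, and each contains an elementary critical cycle of length strictly less than $m$. Now I would transport this back: a closed walk on $\crit(\check A)$ is related to a closed walk on $\crit(A)$ lying in the same s.c.c.\ as $\cycle$ (the lift/relate operation is reversible and preserves the s.c.c.\ structure, as the related subgraph $\check\digr$ of a completely reducible $\digr$ consists precisely of nodes and arcs of related closed walks). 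This produces a critical cycle of $\crit(A)$ in the s.c.c.\ of $\cycle$ whose length is strictly less than $m=l(\cycle)$, contradicting the minimality of $\cycle$ in its s.c.c.

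\textbf{The step I expect to be delicate.} The main obstacle is the transport of the length-strictly-less-than-$m$ sub-cycle from $\crit(\check A)$ back to $\crit(A)$ while controlling its length exactly. I would need the precise statement that the relate operation preserves lengths of closed walks in both directions (so a shorter critical cycle on $\check A$ gives a closed critical walk on $A$ of the same shorter length, which in turn contains an elementary critical cycle of length $\le$ that, still strictly below $m$) and that it keeps everything inside one s.c.c.\ of the critical graph. The cleanest route is to prove a small symmetric lemma first: lifting and relating set up a length-preserving, weight-preserving, s.c.c.-respecting correspondence between closed walks on $\crit(A)$ and closed walks on $\crit(\check A)$, after which the girth-minimality contradiction follows formally and symmetrically, giving both the $\crit(A)\to\crit(\check A)$ direction stated in the lemma and its converse for free.
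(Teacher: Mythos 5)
Your proposal is correct and follows essentially the same route as the paper's own proof: the length equality is immediate from the definition of related walks, and the cycle property is established by contradiction, extracting from the repeated node a strictly shorter critical closed walk on $\crit(\check{A})$ and transporting it back through the relate/lift correspondence to a critical closed walk on nodes of $\cycle$, whose constituent cycles violate the minimality of $l(\cycle)$ in its s.c.c. The paper states this transport step more tersely (the related walk automatically passes through nodes of $\cycle$, which is what pins it to the right s.c.c.), but the substance is identical to your argument.
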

\begin{proof}
It is obvious that if $\check\cycle$ is a cycle then it has the same length as $\cycle$. Therefore, assume that $\check\cycle$ is not a cycle, in which case part of it is a critical cycle $\check{Y}$ of $\digr(\check{A})$, which is related to a critical closed walk $Y$ of $\digr(A)$ that goes through some nodes of $Z$ and has a smaller length than $Z$, a contradiction.
\end{proof}

The following statement was implicit in \cite{CritCol}.

\begin{lemma}
\label{l:crit-related}
If $\crit(A)$ has s.c.c. $\subcrit_1,\ldots,\subcrit_m$ then $\crit(\check{A})$ has s.c.c. $\check\subcrit_1,\ldots\check\subcrit_m$, which can be numbered so that $\subcrit_i$ and $\check\subcrit_i$ are related to each other for each $i=1,\ldots,m$. Also, 
$\subcrit_i$ and $\check\subcrit_i$ have the same girth and cyclicity. 
\end{lemma}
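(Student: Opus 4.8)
### Proof Proposal for Lemma \ref{l:crit-related}

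The plan is to establish a bijection between the strongly connected components of $\crit(A)$ and those of $\crit(\check A)$ using the ``related'' correspondence on closed walks, and then to transport the structural invariants (girth and cyclicity) across this bijection. I would first set up the correspondence at the level of individual critical closed walks. Recall that $F^2$ is block-diagonal with blocks $A$ and $\check A$, so a walk on $\digr(F)$ of even length alternates between the two node-sets $\{1,\ldots,d\}$ and $\{d+1,\ldots,d+r\}$; lifting a critical closed walk $W$ of $\digr(A)$ to $\digr(F)$ and reading off the other sub-walk produces a related closed walk $\check W$ of $\digr(\check A)$, and since $\lambda(F)=\lambda(A)=\lambda(\check A)=0$ with $p(\check W)=p(W)=0$, the walk $\check W$ is critical. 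This gives a map from critical closed walks of $A$ to those of $\check A$, and by symmetry (lifting from $\check A$ instead) an inverse map, so the two critical graphs share the same critical closed walks up to this correspondence.

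Next I would pass from walks to components. Two nodes of $\crit(A)$ lie in the same s.c.c.\ precisely when they lie on a common critical closed walk (one obtained by concatenating critical cycles through both nodes, which is possible exactly within an s.c.c.). The related-walk correspondence preserves the incidence ``node lies on closed walk,'' so it maps each s.c.c.\ $\subcrit_i$ of $\crit(A)$ onto a well-defined union of nodes and arcs of $\crit(\check A)$, namely $\check\subcrit_i$ as defined just before the lemma; strong connectivity transfers because the concatenation structure of critical closed walks is mirrored on the $\check A$ side. Running the inverse map shows this assignment is a bijection onto the s.c.c.'s $\check\subcrit_1,\ldots,\check\subcrit_m$ of $\crit(\check A)$, yielding the numbering claimed.

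It remains to match girth and cyclicity. For the girth equality I would invoke Lemma~\ref{l:girth-related}: a shortest cycle $\cycle$ of $\subcrit_i$ has a related closed walk $\check\cycle$ which is again a cycle of the same length lying in $\check\subcrit_i$, so $g(\check\subcrit_i)\le g(\subcrit_i)$, and the symmetric argument gives the reverse inequality. For cyclicity, recall that cyclicity is the gcd of the lengths of all cycles in the component; since the related correspondence preserves lengths of cycles (again by Lemma~\ref{l:girth-related} applied to each shortest cycle, and more generally because a cycle and its related closed walk have equal length whenever the related walk is itself a cycle) and is a length-preserving bijection on the generating set of cycle lengths, the two gcd's coincide. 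The main obstacle I anticipate is the bookkeeping in showing that the correspondence is genuinely a \emph{bijection} on s.c.c.'s rather than merely a map: I would need to verify that distinct s.c.c.'s of $\crit(A)$ cannot be related to overlapping subgraphs of $\crit(\check A)$, which follows from the fact that a related closed walk of an $A$-critical closed walk is connected and critical, hence confined to a single s.c.c.\ of $\crit(\check A)$, and applying the inverse map shows no collapsing can occur.
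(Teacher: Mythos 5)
Your overall strategy---lifting critical closed walks to $\digr(F)$, using the related-walk correspondence to match components, and invoking Lemma~\ref{l:girth-related} for the girth---is essentially the paper's. The way you obtain the bijection on s.c.c.'s differs only in bookkeeping: the paper assembles the common lifts into an s.c.c. of $\crit(F)$ and lets connectivity and maximality descend to $\check\subcrit$, while you use the characterization that two nodes lie in the same s.c.c. of $\crit(A)$ exactly when they lie on a common critical closed walk. That characterization is correct, but it silently uses the standard fact that (when $\lambda(A)=0$) every closed walk contained in $\crit(A)$ is itself critical, and conversely that every critical closed walk lies entirely inside $\crit(A)$; you should state or justify this, since it is what makes ``lies on a common critical closed walk'' an equivalence compatible with the correspondence.

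There is, however, a genuine flaw in your cyclicity step. You assert that the related-walk correspondence is ``a length-preserving bijection on the generating set of cycle lengths.'' That is not true: the related closed walk of a cycle of $\subcrit_i$ need not be a cycle of $\check\subcrit_i$. Lemma~\ref{l:girth-related} guarantees cycle-to-cycle preservation only for cycles of \emph{smallest} length in their component, and its proof genuinely uses that minimality (a non-cycle related walk would contain a shorter critical closed walk, contradicting minimality). For a non-shortest cycle the related walk can revisit nodes, so its length may simply be missing from the set of cycle lengths on the other side; your parenthetical hedge (``whenever the related walk is itself a cycle'') concedes this but does not rescue the bijection claim, and the gcd computation as you set it up does not go through. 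The repair is exactly what the paper does: define the cyclicity of a strongly connected component as the g.c.d. of the lengths of all \emph{closed walks} in it (this equals the g.c.d. of cycle lengths, since every closed walk decomposes into cycles and every cycle is a closed walk). The related-walk correspondence does preserve lengths of closed walks, and $\subcrit_i$ and $\check\subcrit_i$ are related to each other, so the two g.c.d.'s---hence the cyclicities---coincide. With that substitution your argument is complete and coincides with the paper's proof.
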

\begin{proof}
Let $\subcrit$ be a s.c.c. of $\crit(A)$.
For each closed walk in $\subcrit$, consider a common lift of this walk and its related walk in $\check\subcrit$ to $\digr(F)$. Taking all nodes and arcs of such lifts we obtain a s.c.c. of $\crit(F)$: it is obviously strongly connected, and possibility to add new arcs or new nodes would contradict the maximality of $\subcrit$ or the definition of $\check\subcrit$. The connectivity and maximality of this s.c.c. of $\crit(F)$ implies the same for $\check\subcrit$, thus it is also an s.c.c. of $\crit(\check{A})$. The first part of the statement is now obvious.

 The cyclicities of  $\subcrit_i$ and $\check\subcrit_i$ are equal to the g.c.d.'s of the lengths of closed walks in them. As the related closed walks have the same length and $\subcrit_i$ and $\check\subcrit_i$ are related to each other, these cyclicities are equal. 
 
 The equality between girths follows from Lemma~\ref{l:girth-related}.  
\end{proof}



We will now prove the following bounds~:
\begin{proposition}\label{p:Twr}
 Let $A\in\Rmax^{d\times d}$ with~$\sr(A)=0$ have factor rank~$r$ and let~$\subcrit$ be a strongly connected subgraph of~$\crit(A)$
 whose related subgraph $\check{\subcrit}$ in $\digr(\check{A})$ has $|\check{\subcrit}|$ nodes and circumference~$\circumf(\check{\subcrit})$.
 Then
 \begin{itemize} 
     \item[{\rm (i)}]  $T_{wr}^{l}(A,\subcrit)\le 1+ r\left( l+1\right)-|\check\subcrit|$ for any $l\in\Nat$;
     \item[{\rm (ii)}]  If $A$ is irreducible with cyclicity~$\gamma$ and $l\in\gamma\N$, then
     
     $T_{wr}^{l}(A,\subcrit)\le  l\left\lfloor\frac{r}{\gamma}\right\rfloor +r -\circumf(\check\subcrit)+\gamma$;
    \item[{\rm (iii)}]
     If the factor rank $r$ is equal to the max-girth of~$\crit(A)$ and $\cycle$ is a critical cycle with length~$r$, then $T_{wr}^{r}(A,\cycle)\le \wiel(r)+r+1$;
    \item[{\rm (iv)}]
    Let  $A$ be irreducible with cyclicity~$\gamma$, and let  $\gamma\left\lfloor\frac{r}{\gamma}\right\rfloor$ be the max-girth of $\crit(A)$.
   If $\cycle$ is a critical cycle with length~$\gamma\left\lfloor\frac{r}{\gamma}\right\rfloor$ that attains the max-girth in an s.c.c. of $\crit(A)$, then
    $T_{wr}^{\gamma\left\lfloor\frac{r}{\gamma}\right\rfloor  }(A,\cycle)\le \gamma\wiel\left(\left\lfloor\frac{r}{\gamma}\right\rfloor\right)+r+\gamma$.
    
 \end{itemize}
\end{proposition}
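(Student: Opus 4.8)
The plan is to push each of the four statements down to the $r\times r$ matrix $\check{A}=LU$, where the cycle-removal bounds of the previous subsection are available with $r$ in place of $d$, and then pull the resulting reduced walk back up to $\digr(A)$ through the lift to $\digr(F)$. First I would record the preliminaries. Since $A$ is irreducible (in parts (ii) and (iv)), $\digr(F)$ is strongly connected, so its $\check{A}$-side is strongly connected and $\check{A}$ is irreducible; counting lengths of related closed walks shows $\check{A}$ has the same cyclicity $\gamma$ as $A$, and $\sr(\check{A})=0$. By Lemmas~\ref{l:crit-related} and~\ref{l:girth-related}, the related subgraph $\check{\subcrit}$ is a strongly connected subgraph of $\crit(\check{A})$ with the same cyclicity as $\subcrit$, and a minimal critical cycle keeps its length under the correspondence. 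Finally, because $\sr(\check{A})=0$ and $\check{\subcrit}\subseteq\crit(\check{A})$, the inequality recorded right after Definition~\ref{def:Twr} gives $T_{wr}^{\sigma}(\check{A},\check{\subcrit})\le T_{cr}^{\sigma}(\check{A},\check{\subcrit})$, so every $T_{cr}$ bound on $\check{A}$ is simultaneously a $T_{wr}$ bound there.

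Next I would set up the transfer. Given a walk $W$ from $i$ to $j$ through $\subcrit$ with $l(W)=n$, lift it to $\tilde{W}$ on $\digr(F)$ and read off the walk $\hat{W}$ on $\digr(\check{A})$ formed by the $\check{A}$-side nodes of $\tilde{W}$; it has length $n-1$. The two weight facts that make the whole scheme work are that $\check{a}_{km}=(LU)_{km}\ge l_{k\cdot}+u_{\cdot m}$, which gives $p(\hat{W})\ge p(W)-u_{\mathrm{first}}-l_{\mathrm{last}}$, and that developing any $\check{A}$-walk back through $F$ yields an $A$-walk of no smaller weight. To guarantee that $\hat{W}$ meets $\check{\subcrit}$ I would insert into $W$ an integer number of copies of the critical cycle $\check{\cycle}$ related to a critical cycle $\cycle\subseteq\subcrit$: this is weight-neutral since critical cycles have weight $0$, and choosing the number of copies so that the total inserted length is divisible by the prescribed modulus leaves the residue class of $l(W)$ untouched. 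I would then apply on the $r$-node matrix $\check{A}$ the cycle-removal bound matching each case --- Proposition~\ref{p:TcRLin} for (i), the cyclic refinement Proposition~\ref{p:TcRLinGen} (adapted to the modulus $l$ and using $\circumf(\check{\subcrit})$) for (ii), and the Wielandt-type Propositions~\ref{p:TcrHAWielandt} and~\ref{p:TcRn} for (iii) and (iv). The latter two apply precisely because, $r$ (resp. $\gamma\lfloor r/\gamma\rfloor$) being the max-girth of $\crit(A)$, Lemma~\ref{l:girth-related} forces $\check{\cycle}$ to have length $r$ (resp. $\gamma\lfloor r/\gamma\rfloor$) and hence to be a Hamiltonian, resp. maximal elementary, cycle of the $r$-node graph $\check{A}$. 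Reconnecting the reduced inner walk to $i$ and $j$ by the two end arcs produces the required $V$ with $p(V)\ge p(W)$ and $l(V)\equiv l(W)$ modulo the modulus.

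Collecting lengths then gives the four bounds: the terms $r$, $|\check{\subcrit}|$, $\circumf(\check{\subcrit})$, $\gamma$ and $\wiel(\cdot)$ are exactly those of the cited $T_{cr}$ estimates with $d$ replaced by $r$, while the additive constants ($+1$ and, in the cyclic cases, $+\gamma$) are the reconstruction overhead: one $U$-arc and one $L$-arc at the two ends, plus, in (ii) and (iv), up to $\gamma-1$ extra steps to realign cyclic classes when developing a $\check{A}$-walk into an $A$-walk, exactly as in the proof of Proposition~\ref{p:TcRLinGen}. I expect the main obstacle to be the simultaneous control of the three constraints during the reduction: forcing the inner walk through $\check{\subcrit}$ without losing weight --- which is why the insertion must use a weight-zero critical cycle rather than an arbitrary detour --- while keeping the length in the prescribed residue class. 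In (iii) and (iv) the modulus is the cycle length itself, so a single copy of $\check{\cycle}$ is automatically both weight- and length-neutral; in (i) and (ii) I would invoke the number-theoretic Lemma~\ref{l:HA} (or simply a multiple of $l(\check{\cycle})$ divisible by the modulus) to fix the number of copies. The last point needing care is checking that the correspondence of Lemmas~\ref{l:girth-related} and~\ref{l:crit-related} really does send passage through $\check{\subcrit}$ back to passage through $\subcrit$.
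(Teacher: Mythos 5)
Your transfer machinery (the lift to $\digr(F)$, the inequality $p(\hat{W})\ge p(W)-u_{\mathrm{first}}-l_{\mathrm{last}}$, the fact that developing a $\check{A}$-walk back through $F$ and contracting to $A$-nodes does not decrease weight, and the length/residue bookkeeping) is sound and is indeed the set-up the paper uses. But there is a genuine gap at exactly the point you flag and then leave unresolved: Definition~\ref{def:Twr} requires the output walk $V$ to lie in $\walksnode{i}{j}{\subcrit}$, i.e.\ to traverse a node of $\subcrit$ \emph{on the $A$-side}. Applying Propositions~\ref{p:TcRLin}, \ref{p:TcrHAWielandt} or \ref{p:TcRn} as a black box on $\check{A}$ only guarantees that the reduced walk $\hat{V}$ traverses a node $\kappa$ of $\check\subcrit$. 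When you develop $\hat{V}$ back through $F$, each arc $(\kappa_s,\kappa_{s+1})$ must be expanded via a witness $m_s$ realizing $\check{a}_{\kappa_s\kappa_{s+1}}=\max_m\left(l_{\kappa_s m}+u_{m\kappa_{s+1}}\right)$ — any suboptimal choice loses weight and destroys the chain giving $p(V)\ge p(W)$ — and nothing forces any of these optimal witnesses to belong to $\subcrit$. So the walk $V$ you produce may miss $\subcrit$ entirely and is then simply not admissible for $T_{wr}$. Your fallback (insert copies of the critical cycle, developed via the related walk, to restore the passage) cannot be done before the reduction, since the black box retains no memory of the input's structure; done after the reduction it costs $l\cdot l(\cycle)$ extra length in (i)–(ii) and $l(\cycle)$ in (iii)–(iv), which breaks the stated bounds: in (iii), for instance, you get roughly $(r^2-r+2)+r=r^2+2$ instead of the claimed $\wiel(r)+r+1=r^2-r+3$, an overshoot of $r-1$.

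This difficulty is precisely why the paper does \emph{not} reduce to a black-box $T_{cr}$ bound on $\check{A}$ in cases (i)–(ii). Instead it works directly on the lifted walk in $\digr(F)$: it first inserts lifts of cycles of $\subcrit$ so that a designated node $m$ of $\subcrit$ sits between the first two occurrences $o_1,o_2$ of nodes of $\check\subcrit$, and then performs the length reduction by applying Lemma~\ref{l:HA} to the occurrences of each individual node of $\digr(\check{A})$, always excluding the subwalk containing the protected segment $o_1\rightarrow m\rightarrow o_2$; the bound then comes from counting surviving occurrences ($l$ per node of $\check\subcrit$, $l+1$ per other node, plus one), which is where $1+r(l+1)-|\check\subcrit|$ comes from. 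In (iii)–(iv) the paper does invoke Proposition~\ref{p:TcrHAWielandt} on the check side, but only on the subwalk strictly between a protected prefix $ii_1i_2i_3$ (with $i_2\in\cycle$) and the endpoint, re-attaching the prefix afterwards, so that passage through $\cycle$ survives at a cost of $+2$ rather than $+r$. A secondary issue: the ``cyclic refinement Proposition~\ref{p:TcRLinGen} adapted to the modulus $l$ and using $\circumf(\check\subcrit)$'' that you invoke for (ii) does not exist in the paper — that proposition fixes the modulus to be the cyclicity of the subgraph and subtracts $\sigma$, not the circumference — so even apart from the passage problem you would have to prove this adaptation, which essentially amounts to redoing the occurrence-counting argument the paper gives.
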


\begin{proof}
 The general idea is always the same: we start with a walk~$W \in \walkslennode{i}{j}{t,l}{\subcrit}$ on~$\digr(A)$,
 to which we associate a walk~$\check{W}$ on~$\digr(\check{A})$ and
 we insert and remove cycles from this walk in a way that ensures that when we come back to a walk~$V$ on~$\digr(A)$, the walk still belongs to~$\walkslennode{i}{j}{t,l}{\subcrit}$.
 To get better statements, we actually work with a walk on $\digr(F)$ rather than $\digr(\check{A})$
 but we essentially consider the nodes of the walk that belong to $\digr(\check{A})$.
 Let us go into the details of the different cases.
 
 {\bf Proof of (i):}  Walk $W$ is lifted to a walk~$\tilde{W}$ on $\digr(F)$ with the same weight, whose second and penultimate node belong to~$\digr(\check{A})$.
  We identify the first occurrence~$o_1$ of a node of~$\check\subcrit$ if there is one and insert $l$ copies of the lift to~$\digr(F)$ of a cycle of~$\subcrit$ there.
  If there is no node of~$\check\subcrit$, we insert $l$ copies of a lift to~$\digr(F)$ of a cycle of $\subcrit$ at the first occurrence of a node of~$\subcrit$,
  or two copies of it if the original cycle of $\subcrit$ has length~$1$ and~$l=1$.
  In all cases, it ensures that there is a node~$m$ of~$\subcrit$ between the first two occurrences~$o_1$ and~$o_2$ of nodes of~$\check\subcrit$.
  Now, for each node~$k$ of~$\digr(\check{A})$ that is not in $\check\subcrit$, we count the total number of occurrences of~$k$ in $\tilde{W}$, which can be found either on the left of~$o_1$ or on the right of~$o_2$.
  If $k$ occurs more than $l+1$ times in total, then we can remove arcs between occurrences of~$k$
  and preserve the arcs $o_1\rightarrow m\rightarrow o_2$ and the length of~$\tilde{W}$ modulo~$2l$, by applying Lemma~\ref{l:HA}
  to the set of all lengths of the subwalks between the consecutive occurrences of~$k$ excluding the subwalk between the last occurrence before $o_1$ and the first occurrence after $o_2$.
  If $k$~belongs to~$\check\subcrit$, then $o_1$ is the first position where it can occur. Hence, if $k$ does not occur in~$o_1$, then
  all occurrences are on the right of~$o_2$ (possibly including $o_2$), so we apply Lemma~\ref{l:HA} to the set of the lengths of all subwalks between consecutive occurrences of $k$. We can remove subwalks as soon as there are more than~$l$ occurrences.
  The walk we obtain contains the nodes of~$\check\subcrit$ at most $l$ times outside position~$o_1$ and the other $r-|\check\subcrit|$ nodes of~$\digr(\check{A})$ at most $l+1$ times. The total number of nodes of~$\digr(\check{A})$ is at most $(l+1)r-|\check\subcrit|+1$ ,
  thus the length of the walk is at most $1+2((l+1)r-|\check\subcrit|)+1=2((l+1)r-|\check\subcrit|+1)$
  and the walk~$V$ we get by keeping only nodes of~$\digr(A)$ has length at most $(l+1)r-|\check\subcrit|+1$ and the same length as~$W$ modulo~$l$.
  Moreover, it contains the node~$m$ of~$\subcrit$ and has weight at least $p(\tilde{W})=p(W)$.
  
 {\bf Proof of (ii):} Walk $W$ is lifted to a walk~$\tilde{W}$ on $\digr(F)$ with the same weight, whose second and penultimate node belong to~$\digr(\check{A})$.
 As $W$ traverses a node of $\subcrit$, we can insert $l$ copies of the lift to~$\digr(F)$ of a cycle of~$\subcrit$ at such node into $\tilde{W}$. This ensures 
 that $\tilde{W}$ contains a node of~$\check\subcrit$ in each cyclic class of~$\digr(\check{A})$.
  We identify the first occurrence~$o_1$ of a node of~$\check\subcrit$ in one of the smallest cyclic classes of ~$\digr(\check{A})$.
  Let us denote by~${\check E}$ the cyclic class of the node in~$o_1$ and insert $l$ copies of a lift to~$\digr(F)$ of a cycle of~$\subcrit$ in~$o_1$
  to ensure that there is a node~$m$ of~$\subcrit$ between the first two occurrence~$o_1$ and~$o_2$ of nodes of~${\check E}$ (or two copies of that lift if 
$l=1$ and the length of the original cycle is $1$).
  Each cyclic class of $\digr(\check{A})$, in particular $\check E$, occurs every~$2\gamma$ nodes of the walk.
  Now, for each node~$k$ of~${\check E}$, we count the total number of occurrences of~$k$ remaining after removing subwalks between consecutive occurrences of $k$ by means of Lemma~\ref{l:HA} while maintaining the subwalk between $o_1$ and $o_2$ intact.  
  Similarly to (i), after reduction there are at most $(\frac{l}{\gamma}+1)|{\check E}|-|{\check E}\cap\check\subcrit|+1$
  occurrences of nodes of~${\check E}$.
  We know that~$|{\check E}|\le \left\lfloor\frac{r}{\gamma}\right\rfloor$ and consider two cases.
  
  \textbf{ Case a) : } $|{\check E}|= \left\lfloor\frac{r}{\gamma}\right\rfloor$
  
  In this case, there are at most $r \rem\gamma$ classes
  with more than $\left\lfloor\frac{r}{\gamma}\right\rfloor$ nodes, so we get a walk of length at most $1+2(r \rem\gamma)$
  before reaching~${\check E}$ from the starting node.
  Then, we have a walk of length at most $2\gamma \left((\frac{l}{\gamma}+1)|{\check E}|-|{\check E}\cap\check\subcrit|+1\right)$
  between the first and last node of~${\check E}$.  After leaving~${\check E}$ for the last time
  it goes through at most~$\gamma-1$ cyclic classes of~$\digr(\check A)$ and thus has length at most $2(\gamma -1)+1$.
  
  Finally, the walk has length at most
  $1+2(r \rem\gamma) +2\gamma\left( ( \frac{l}{\gamma}  +1)\left\lfloor\frac{r}{\gamma}\right\rfloor -|{\check E}\cap\check\subcrit| \right) +2(\gamma -1)+1
  = 2\left(l \left\lfloor\frac{r}{\gamma}\right\rfloor +r - \gamma|{\check E}\cap\check\subcrit| +\gamma \right)$.
  Since $|{\check E}\cap\check\subcrit|\ge \frac{\circumf(\check\subcrit)}{\gamma}$, we get the desired bound.
  
  \textbf{ Case b) : }  $|{\check E}|< \left\lfloor\frac{r}{\gamma}\right\rfloor$.
  
  Before reaching~${\check E}$ from the starting node, the walk goes through at most~$\gamma-1$ cyclic classes of~$\digr(\check A)$ and thus has length at most $2(\gamma -1)+1$.
  Then, we have a walk of length at most $2\gamma \left((\frac{l}{\gamma}+1)|{\check E}|-|{\check E}\cap\check\subcrit|+1\right)$
  between the first and last node of~${\check E}$.  After leaving~${\check E}$ for the last time it  has length at most $2(\gamma -1)+1$ as in case~\textbf{a)}.
  Finally, the walk has length at most
  $2(\gamma -1)+1 + 2\gamma\left(( \frac{l}{\gamma}  +1)\left(\left\lfloor\frac{r}{\gamma}\right\rfloor -1\right) -|{\check E}\cap\check\subcrit| \right) +2(\gamma -1)+1$, 
  which is stricly less than the bound in case a).
  
{\bf Proof of (iii):}
Let $\check\cycle$ be a closed walk of $\crit(\check{A})$ that is related to $\cycle$.
In this case, $\check\cycle$ is a cycle of length~$r$by Lemma~\ref{l:girth-related}, thus all nodes of~$\digr(\check A)$ belong to~$\check\cycle$.
  
  $W\in\walksnode{i}{j}{\cycle}$ is lifted to a walk~$\tilde{W}$ on  $\digr(F)$ with the same weight,
  whose second and penultimate nodes~$i_1$ and~$j_1$ belong to~$\digr(\check{A})$ and thus to~$\check\cycle$.
  We insert in~$i_1$ a lift to~$\digr(F)$ of~$\cycle$ to ensure that the walk starts with $ii_1i_2i_3$ where $i_2$ belongs to~$\cycle$ and
  $i_3$ to $\digr(\check{A})$.
  The subwalk between $i_3$ and~$j_1$ defines a walk on~$\digr(\check A)$, to which we apply Proposition~\ref{p:TcrHAWielandt} to get a new walk
  with length at most $r^2 -r +1$.
  Lifting again to $\digr(F)$ and adding $ii_1i_2i_3$ at start and~$j$ as end node, we have a new walk on $\digr(F)$ which defines a walk in
  $\walksnode{i}{j}{\cycle}$ with length at most~$r^2 -r +3=\wiel(r)+r+1$.
  
  {\bf Proof of (iv):} In this case, $\check\cycle$ is also a critical cycle of length~$\gamma \left\lfloor\frac{r}{\gamma}\right\rfloor$ by Lemma~\ref{l:girth-related}, so
   each cyclic class of~$\digr(\check A)$ contains exactly $\left\lfloor\frac{r}{\gamma}\right\rfloor$ nodes of~$\check\cycle$.
   Therefore, there are at most~$r\rem\gamma$ of $\digr(\check A)$ classes that contain other nodes.
   $W\in\walksnode{i}{j}{\cycle}$ is lifted to a walk~$\tilde{W}$ to $\digr(F)$ with the same weight,
  whose second and penultimate nodes~$i_1$ and~$j_1$ belong to~$\digr(\check{A})$.
  This walk reaches a cyclic class~$\check E$ of~$\digr(\check{A})$ consisting only of $\left\lfloor\frac{r}{\gamma}\right\rfloor$ nodes
  of~$\check\cycle$ after at most $r\rem\gamma$ nodes of~$\digr(\check{A})$.
  We identify the first occurrence~$o_1$ of a node of~$\check E$ and insert a lift to~$\digr(F)$ of~$\cycle$ there, so that all nodes of~$\digr(A)$
  between~$o_1$ and the second occurrence~$o_2$ of a node of~$\check E$ belong to~$Z$. Let~$o_3$ be the last occurrence of a node of~$\check E$.
  We split our walk into~$W_1$ from~$i$ to~$o_2$, $W_2$ from~$o_2$ to~$o_3$ and $W_3$ from~$o_3$ to~$j$.
  $W_2$ defines a walk on $\digr(\check{A}^\gamma)$ whose nodes all belong to~$\check E$, to which we apply Proposition~\ref{p:TcrHAWielandt}.
  This gives a walk of length at most~$\left\lfloor\frac{r}{\gamma}\right\rfloor^2-\left\lfloor\frac{r}{\gamma}\right\rfloor+1
  =\wiel\left(\left\lfloor\frac{r}{\gamma}\right\rfloor\right)+\left\lfloor\frac{r}{\gamma}\right\rfloor-1$,
  which we lift again to~$W_4$ on~$\digr(F)$. As in case~\textbf{(iii)}, we have $l(W_1)\leq 1+2r\rem\gamma +2\gamma$ and $l(W_3)\leq 2\gamma -1$, so that the length of $W_1W_4W_3$ is at most
$$
1+2r\rem\gamma+2\gamma +2\gamma \left( \wiel\left(\left\lfloor\frac{r}{\gamma}\right\rfloor\right)+\left\lfloor\frac{r}{\gamma}\right\rfloor-1\right)+2\gamma -1,
$$
and this walk traverses a node of $\cycle$. Contracting it back to a walk on $\digr(A)$, we obtain a walk, the length of which is 
bounded by $\gamma\wiel\left(\left\lfloor\frac{r}{\gamma}\right\rfloor\right)+r+\gamma$ and which also traverses a node of $\cycle$.


\end{proof}

\section{Bounds for $T_1(A,B)$ using
Cycle Removal Threshold}

\label{s:T1AB}

Here we first deduce the bounds if Schwarz and Kim in the case when $B$ is defined according to the Hartmann-Arguelles scheme (Theorem~\ref{th:HA}), using the new bounds on $T_{cr}$ obtained in Section~\ref{s:Thresh}. 
The second subsection achieves a result on $T_1(A,B)$ in the case of Cycle Threshold expansion using the cyclicity (Theorem~\ref{th:T1CT}).

\subsection{The case of Hartmann and Arguelles expansion}

Let us first recall the following link between the cycle
removal threshold and $T_1(A,B)$. The statement will
require the following notions, which we now introduce.

Let~$\digr$ be a subgraph of~$\digr(A)$ and~$\sigma\in\Nat$.

The {\em exploration penalty}~$\ep^\sigma(i)$ of a node~$i\in\digr$
is the least~$T\in\Nat$ such that for any multiple~$t$
of~$\sigma$ greater or equal to~$T$, there is a closed walk
on~$\digr$ with length~$t$ starting at~$i$.

The {\em exploration penalty}~$\ep^\sigma(\digr)$
of $\digr$ is the maximum of
the~$\ep^\sigma(i)$ for~\mbox{$i\in\digr$.}

A subgraph $\subcrit$ of $\crit(A)$ is called {\em representing}, if it is completely reducible and each s.c.c. of $\crit(A)$ contains exactly one s.c.c. of $\subcrit$.

We will use the following bound for 
$T_1(A,B)$:
\begin{equation}
\label{e:t1B-ineq}
\begin{split}
T_1(A,B)
&\le \max_{l=1,\ldots,m}
\left(T_{cr}^{\sigma_l}(A,\subcrit_l)-\sigma_l+1+\ep^{\sigma_l}(\subcrit_l)\right)
\end{split}
\end{equation}
Here $\subcrit_1,\cdots,\subcrit_m$ are the s.c.c.'s of a representing subgraph $\subcrit$ of~$\crit(A)$
and $\sigma_l$ are the cyclicities of $\subcrit_l$
for $l\in \{1,\ldots,m\}$.

\begin{proposition}[\cite{wCSR}, Proposition 6.5]
\label{p:TcrToT1}
Bound~\eqref{e:t1B-ineq} holds when
$B=B_{\nacht}(A)$ or $B=B_{\harg}(A)$.
\end{proposition}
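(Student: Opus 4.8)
The plan is to argue entirely through the optimal walk interpretations~\eqref{e:optwalkint} and to reduce the claim to a ``reduce-and-pad'' construction on walks, carried out one component at a time. By the scalar invariance noted in Section~\ref{ss:CSR} I may assume $\lambda(A)=0$, so that every closed walk of $\digr(A)$ has nonpositive weight while every critical closed walk has weight $0$. Under this normalization the weak CSR expansion $A^t=CS^tR[A]\oplus B^t$ decomposes into two inclusions. The inclusion $A^t\le CS^tR[A]\oplus B^t$ is, for $B=\bnacht[A]$, precisely~\eqref{e:weakCSRNacht} and holds for every $t$; combined with $A^t\ge B^t$ and Lemma~\ref{l:T1N}, the remaining inclusion is then equivalent to $A^t\ge CS^tR[A]$. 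Hence for the Nachtigall scheme it suffices to prove that $A^t\ge CS^tR[A]$ whenever $t$ exceeds the right-hand side of~\eqref{e:t1B-ineq}.

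To localize $A^t\ge CS^tR[A]$ I would invoke the enhanced walk interpretation of the CSR term from~\cite{wCSR}, which writes $(CS^tR[A])_{ij}$ as the maximum over $l=1,\dots,m$ of $p\big(\walkslennode{i}{j}{t,\sigma_l}{\subcrit_l}\big)$: the best weight of a walk from $i$ to $j$ that meets the component $\subcrit_l$ and has length $\equiv t\pmod{\sigma_l}$. Fixing an index $l$ and a maximizing walk $W\in\walkslennode{i}{j}{t,\sigma_l}{\subcrit_l}$ (inserting a cycle of $\subcrit_l$ first if needed to make $W$ actually meet $\subcrit_l$), the task reduces to producing a genuine walk from $i$ to $j$ of length exactly $t$ and weight at least $p(W)$. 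Once this is done, $A^t_{ij}=p(\walkslen{i}{j}{t})\ge p(W)=(CS^tR[A])_{ij}$, and taking the maximum over $i$, $j$ and $l$ yields $A^t\ge CS^tR[A]$.

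The heart of the argument is the reduce-and-pad step. Using Definition~\ref{def:Tcr} and the bounds of Section~\ref{s:Thresh} I would first replace $W$ by a walk $V\in\walksnode{i}{j}{\subcrit_l}$ obtained by removing cycles and inserting cycles of $\subcrit_l$, with $l(V)\equiv_{\sigma_l}t$ and $l(V)\le T_{cr}^{\sigma_l}(A,\subcrit_l)$; because $\lambda(A)=0$, every removed cycle is nonpositive and every inserted critical cycle has weight $0$, so $p(V)\ge p(W)$. Since $V$ passes through a node of $\subcrit_l$, I then insert there a critical closed walk of length $t-l(V)$ (weight $0$), raising the length to exactly $t$ without lowering the weight. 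Its existence is supplied by the exploration penalty as soon as the residual $t-l(V)$ --- a nonnegative multiple of $\sigma_l$ that is at least $t-T_{cr}^{\sigma_l}(A,\subcrit_l)$ --- is either $0$ or at least $\ep^{\sigma_l}(\subcrit_l)$. Tracking this residual modulo $\sigma_l$, and discarding the single boundary multiple that could otherwise fall in the forbidden gap just below $\ep^{\sigma_l}(\subcrit_l)$, is exactly what forces the threshold to be $T_{cr}^{\sigma_l}(A,\subcrit_l)-\sigma_l+1+\ep^{\sigma_l}(\subcrit_l)$, and maximizing over $l$ reproduces~\eqref{e:t1B-ineq}.

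I expect two points to require the real care. The first is the constant bookkeeping above: forcing the residual length to land on $0$ or safely beyond the exploration penalty, rather than in the short interval just below it, is what the $-\sigma_l+1$ correction encodes, and it needs the precise mod-$\sigma_l$ case distinction. The second, and the genuine obstacle, is the inclusion $A^t\le CS^tR[A]\oplus B^t$ in the Hartmann--Arguelles case. There $B=\bharg[A]$ sets to $\0$ every row and column of $\hagr\supsetneq\crit(A)$, so a length-$t$ walk may meet $\hagr$ while avoiding $\crit(A)$; such a walk is bounded neither by $B^t$ (it meets $\hagr$) nor automatically by $CS^tR[A]$ (it avoids $\crit(A)$), and~\eqref{e:weakCSRNacht} is unavailable. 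To close this case one must use the defining structure of $\hagr$ --- a union of s.c.c.'s of a max-balanced threshold graph, each containing a critical s.c.c. --- together with the fact that any cycle avoiding $\crit(A)$ has strictly negative mean, to bound the weight of such walks by $(CS^tR[A])_{ij}$ for $t$ past the same threshold. The reduce-and-pad inclusion $A^t\ge CS^tR[A]$, by contrast, is identical for both schemes since it only uses the critical components $\subcrit_l$.
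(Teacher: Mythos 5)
Your treatment of the Nachtigall case is essentially correct and is, in substance, the same reduce-and-pad argument the paper relies on: normalize $\sr(A)=0$, use the inclusion~\eqref{e:weakCSRNacht} together with Lemma~\ref{l:T1N} to reduce the claim to $A^t\ge CS^tR[A]$, write $(CS^tR[A])_{ij}$ as a maximum over the components $\subcrit_l$ of weights of walks of length $\equiv t \pmod{\sigma_l}$, and then shorten a maximizing walk via $T_{cr}^{\sigma_l}(A,\subcrit_l)$ (weight does not decrease since removed cycles are nonpositive and inserted critical cycles have weight $0$) and pad it back to length exactly $t$ using the exploration penalty. The mod-$\sigma_l$ bookkeeping you wave at (``discarding the single boundary multiple'') is precisely where $\ep^{\sigma_l}(\subcrit_l)$ and the residual $t-l(V)$ must be compared as multiples of $\sigma_l$; in the paper's actual applications the $\subcrit_l$ are cycles, so $\ep^{\sigma_l}(\subcrit_l)=0$ and this point is harmless.

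However, the Hartmann--Arguelles half of the statement is not proved in your proposal, and that is a genuine gap, not a detail. You correctly identify the obstacle (walks meeting $\hagr$ but avoiding $\crit(A)$ are controlled neither by $B^t$ nor, a priori, by $CS^tR[A]$; also Lemma~\ref{l:T1N} is explicitly unavailable for this scheme), but the ingredient you propose to close it --- that every cycle avoiding $\crit(A)$ has strictly negative mean --- cannot work: it only shows such walks decay at a rate depending on the numerical entries of $A$, which is a $T_2$-type statement (compare Theorem~\ref{th:T2Bounds}) and cannot yield the entry-independent combinatorial threshold of~\eqref{e:t1B-ineq}. The actual argument (\cite{wCSR}, Lemmas 8.1--8.2) is a different construction: pass to the max-balanced scaling $D^-AD$ (which leaves $T_1$, $B$ and $CS^tR$ unchanged up to conjugation); given a maximal-weight walk $W$ through $\hagr$ avoiding $\crit(A)$, let $\mu(W)$ be its maximal arc weight and consider the threshold graph $\thrha(\mu(W))$ (or $\hagr$ itself when $\mu(W)\le\mu^{ha}$). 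Max-balancing makes this graph completely reducible, and maximality of $\mu^{ha}$ forces each of its s.c.c.'s to contain an s.c.c.\ of $\crit(A)$; hence from a node of $W$ lying in this graph one can splice in a round-trip detour to a critical component using only arcs of weight $\ge\mu(W)$, and then shorten the spliced walk back to length $\le t$, $\equiv t$, by cycle removal. The weight comparison is then done arc by arc: deleted arcs have weight $\le\mu(W)$, inserted arcs have weight between $\mu(W)$ and $0$, and the total length does not increase, so the resulting walk through $\crit(A)$ has weight $\ge p(W)$. Without this detour-and-accounting step --- which is exactly what max-balancing buys and what your sketch does not supply --- the case $B=\bharg[A]$ remains unproven.
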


We note that in the case of Nachtigall 
expansion and denoting $\Tilde{A}=A-\lambda(A)$ we can replace $T_{cr}$ by $T_{wr}$ in~\eqref{e:t1B-ineq}:
\begin{equation}
\label{e:t1B-ineqMod}
T_1(A,B)
\le \max_{l=1,\ldots,m}
\left(T_{wr}^{\sigma_l}(\tilde{A},\subcrit_l)-\sigma_l+1+\ep^{\sigma_l}(\subcrit_l)\right).
\end{equation}
We are not going to use this observation here, although it gives an alternative way to derive bounds that use factor rank for the case of Nachtigall expansion. Unfortunately, in the case of Hartmann-Arguelles expansion, the proof of \cite{wCSR}[Proposition 6.5] does not allow for such a replacement.

Proposition~\ref{p:TcrToT1} asserts that~\eqref{e:t1B-ineq}
holds not only for the Nachtigall but also for the Hartmann-Arguelles version of the weak CSR expansion. 
Note also that Lemma~\ref{l:T1N} does not hold 
in the case of the Hartmann-Arguelles expansion.

Bound~\eqref{e:t1B-ineq} will be used only with $\subcrit_l$ being cycles. In this case~$\sigma_l=l(\subcrit_l)$ and $\ep^{\sigma_l}(\subcrit_l)=0$
for each $l=1,\ldots,m:$ the closed walks of lengths $0,\sigma_l,2\sigma_l,\ldots$ are the empty walk, cycle $\subcrit_l$ and walks consisting of repetitions of $\subcrit_l$. 

On our way to Kim and Schwarz bounds for $T_{1,\harg}(A)$ that make use of the cyclicity $\gamma$, let us first pay attention to the case $\left\lfloor\frac{d}{\gamma}\right\rfloor=1,$
for which we will not use Proposition~\ref{p:TcrToT1}.

\begin{proposition}\label{p:n/gamma=1}
If $d<2\gamma$ then for any $A\in\Rmax^{d\times d}$ with cyclicity $\gamma$ such that $\sr(A)\neq \0$, and any~$t\ge d\rem \gamma$, we have~$A^t=(\lambda(A))^{\otimes t}\otimes CS^tR$.
\end{proposition}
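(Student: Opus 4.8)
The plan is to exploit that $d<2\gamma$ forces $\lfloor d/\gamma\rfloor=1$, so the smallest cyclic class of $\digr(A)$ is a single node and the corresponding diagonal block of $A^\gamma$ is a scalar that is already in its periodic regime. First, using the scalar-multiplication invariance recorded after~\eqref{e:CSRschemes} and the hypothesis $\sr(A)\neq\0$, I would assume $\sr(A)=0$, so the goal reduces to $A^t=CS^tR[A]$ for all $t\ge d\rem\gamma=d-\gamma$. Since $A$ has a well-defined cyclicity it is irreducible; I put it in the block form~\eqref{eq:A} with cyclic classes $N_1,\dots,N_\gamma$, and by Proposition~\ref{p:cyclesAgamma} each block $A^\gamma_i$ satisfies $\sr(A^\gamma_i)=0$. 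Both $A^t$ and $CS^tR[A]$ carry exactly one nonzero block per row block, and these sit in the same positions because the cyclicity $\sigma$ of $\crit(A)$ is a multiple of $\gamma$ (every critical cycle is a cycle of $\digr(A)$, hence has length divisible by $\gamma$); hence it suffices to prove $A^t_i=CS^tR[A]_i$ for each row block $i$.

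The key local fact I would isolate is that a size-one class is trivially periodic: if $|N_p|=1$ then $A^\gamma_p$ is a $1\times1$ matrix whose entry equals $\sr(A^\gamma_p)=0$, so $(A^\gamma_p)^k=[0]=CS^kR[A^\gamma_p]$ for every $k\ge0$. Then, for a fixed $t\ge d-\gamma$ and row block $i$, I would route forward to the nearest size-one class $N_p$, say at forward distance $s_0=(p-i)\bmod\gamma$, and write $t=s_0+\gamma k+t'$ with $k=\lfloor(t-s_0)/\gamma\rfloor\ge0$ and $t'=(t-s_0)\bmod\gamma$, which is legitimate once $s_0\le t$. Specialising~\eqref{eq:At} and~\eqref{eq:CStR} to $s=s_0$, $i+s\equiv p$, and these $k,t'$ gives
\begin{equation*}
A^t_i=A^{s_0}_i(A^\gamma_p)^kA^{t'}_p,\qquad CS^tR[A]_i=A^{s_0}_iCS^kR[A^\gamma_p]A^{t'}_p,
\end{equation*}
and the two right-hand sides agree because $(A^\gamma_p)^k=CS^kR[A^\gamma_p]$; thus $A^t_i=CS^tR[A]_i$.

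The step I expect to be the main obstacle is verifying that $s_0\le t$ for every row block, i.e. that from any cyclic class the forward distance to some size-one class is at most $d-\gamma$. I plan to handle this by counting: writing $B=\{j:|N_j|\ge2\}$, one has $d=\sum_j|N_j|\ge(\gamma-|B|)+2|B|=\gamma+|B|$, so $|B|\le d-\gamma$, and also $|B|+1\le\gamma$ since $d\le2\gamma-1$. Among the $|B|+1$ distinct consecutive classes $N_i,\dots,N_{i+|B|}$ at least one lies outside $B$, as otherwise $B$ would contain $|B|+1$ distinct classes. Hence $s_0\le|B|\le d-\gamma\le t$, the decomposition above is valid with $k\ge0$, and the identity holds for every $i$. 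Reinstating $\sr(A)$ then yields $A^t=(\sr(A))^{\otimes t}\otimes CS^tR$ for all $t\ge d\rem\gamma$, completing the argument.
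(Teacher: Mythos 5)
Your proof is correct, but it takes a genuinely different route from the paper's. The paper proves the two inequalities $CS^tR\le A^t$ and $A^t\le CS^tR$ directly at the level of walks, using the optimal walk interpretation~\eqref{e:optwalkint}: it first observes that all cycles of $\digr(A)$ have length exactly $\gamma$, that every one-node cyclic class consists of a critical node and $\crit(A)$ is strongly connected, then performs surgery on a minimal-length optimal walk (splitting it at the one-node class, discarding the closed middle part by optimality/minimality, and re-inserting critical cycles of length $\gamma$), and finally notes that any walk of length $\ge d\rem\gamma$ must hit a one-node (hence critical) class. You instead reuse the algebraic machinery of Section~\ref{s:Nacht}: the block form~\eqref{eq:A}, the identities~\eqref{eq:At} and~\eqref{eq:CStR}, and Proposition~\ref{p:cyclesAgamma}, reducing everything to the observation that the $1\times 1$ block $A^\gamma_p=[0]$ at a one-node class satisfies $(A^\gamma_p)^k=CS^kR[A^\gamma_p]$ for all $k\ge 0$, i.e.\ is periodic from time zero; routing each block row through such a class then forces $A^t_i=CS^tR[A]_i$. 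Both arguments ultimately hinge on the same counting fact --- at most $d\rem\gamma$ cyclic classes have more than one node --- which in the paper bounds the prefix $l(P_1)\le d\rem\gamma$ and in your proof bounds the forward distance $s_0\le d\rem\gamma$. What your version buys is modularity: no walk surgery and no minimality argument, just the limit identities (valid since $\sr(A)=0$ and, by Proposition~\ref{p:cyclesAgamma}, $\sr(A^\gamma_i)=0$), plus the easily verified fact that $\sigma=\cyc(\crit(A))$ is a multiple of $\gamma$ so that the block patterns of $A^t$ and $CS^tR[A]$ agree. What the paper's version makes explicit, and yours leaves implicit, is the structural picture (all cycles of length $\gamma$, one-node classes critical, every sufficiently long walk passes through one); both yield exactly the same threshold $d\rem\gamma$. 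Two small points you gloss over, neither fatal: irreducibility of $A$ (also implicit in the paper, and needed for cyclic classes to make sense), and the fact that $\gamma\le d$ (hence $d\rem\gamma=d-\gamma$), which requires the hypothesis $\sr(A)\neq\0$ guaranteeing that $\digr(A)$ has at least one cycle.
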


\begin{proof}
Without loss of generality, we assume that $\sr(A)=0$.

Let us first notice that all cycles of~$\digr(A)$ have length~$\gamma$, since their length is less than $2\gamma$ and divisible by~$\gamma$.
In particular, $\crit(A)$ has cyclicity~$\gamma$ and all cycles of $\crit(A)$ have length $\gamma$.
Moreover, at most~$d\rem \gamma$ cyclic classes of~$\digr(A)$ have more than one node,
so that there is a class with only one node, $\crit(A)$ is strongly connected, and the nodes in such one-node classes are critical.

{\bf Proof of $(CS^tR)_{ij}\leq A^t_{ij}$.}\\
Let us take an optimal walk $W\in\walkslennode{i}{j}{t,\gamma}{\crit(A)}$, i.e. such 
that $p(W)=(CS^tR)_{ij}$, with minimal length among walks of this type.

Let us show that 
\begin{equation}\label{eq:lWleqt}
 l(W)\leq t.
\end{equation}

If $l(W)\leq d\rem\gamma$, then~\eqref{eq:lWleqt} is obvious. Otherwise, there is a path $P_1$ which is a prefix of $W$ and which connects $i$ to the first occurrence of the only node $k$ of a cyclic class with $1$ element. Denote by $P_2$ the suffix of $W$ which connects the last occurrence of $k$ to $j$. Thus we obtain a decomposition $
W=P_1UP_2$, where $U$ is a closed walk and thus can be decomposed into cycles. Note that all these cycles have length $\gamma$ by above arguments, and that all of them 
are critical, or this contradicts the optimality of $p(W)$. But then $p(P_1P_2)=p(W)$ and, by the minimality of $l(W)$, $W=P_1P_2$.

We have $l(P_1)\leq d\rem\gamma$ and $l(P_2)\leq\gamma -1$ (for otherwise $P_2$ would contain $k$ at least twice, contradicting its definition), 
hence $l(W)<\gamma+d\rem\gamma\leq t+\gamma$.
But $l(W)\equiv_\gamma t$, so that \eqref{eq:lWleqt} is proved.

By its definition, $W$ traverses a critical node and $t\equiv_{\gamma} l(W)$. Since it also satisfies~\eqref{eq:lWleqt}, we can form a walk~$V$ of 
length $t$ by possibly inserting a number of critical cycles into $W$ (recall that all of them have length $\gamma$). By doing so, we show that $p(W)=p(V)\leq (A^t)_{ij}$. 

{\bf Proof of $A^t_{ij}\le (CS^tR)_{ij}$.}\\
Since at most~$d\rem \gamma$ cyclic classes of~$\digr(A)$ have more than one node,
all walks on~$\digr(A)$ with length~$t\ge d \rem \gamma $ meet a cyclic class with only one node, which is critical. Hence $A^t_{ij}\le (CS^tR)_{ij}$, using the optimal walk interpretation~\eqref{e:optwalkint}.
\end{proof}

We now can prove the main result of this section.

\begin{theorem}\label{th:HA}
The bounds of Theorem~\ref{th:KimSchwarzNacht} 
apply to~$T_1(A,B)$ whenever we have~\eqref{e:t1B-ineq}, and in particular for 
$B=B_{\harg}$.
\end{theorem}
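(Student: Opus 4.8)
The plan is to feed the cycle-removal estimates of Section~\ref{s:Thresh} into~\eqref{e:t1B-ineq}. That inequality holds for every representing subgraph $\subcrit$ of $\crit(A)$, and for $\subcrit_l$ a cycle we have $\sigma_l=l(\subcrit_l)$ and $\ep^{\sigma_l}(\subcrit_l)=0$. So I would take $\subcrit$ to consist of one cycle $Z_l$ in each strongly connected component of $\crit(A)$, reducing~\eqref{e:t1B-ineq} to
\begin{equation*}
T_1(A,B)\le\max_l\bigl(T_{cr}^{l(Z_l)}(A,Z_l)-l(Z_l)+1\bigr).
\end{equation*}
Assuming $\sr(A)=0$ and writing $q=\lfloor d/\gamma\rfloor$, $\rho=d\rem\gamma$, the two structural facts I would use throughout are that every cycle of the irreducible $\digr(A)$ has length divisible by $\gamma$ and that an elementary cycle has length at most $\gamma q$; hence $\gamma\mid l(Z_l)\le\gamma q$ for each representing cycle.

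I would then peel off the degenerate case $q=1$, i.e. $d<2\gamma$, and dispatch it by Proposition~\ref{p:n/gamma=1}: it gives $A^t=(\sr(A))^{\otimes t}\otimes CS^tR$ for all $t\ge\rho$, so (as $B^t\le A^t$) the weak CSR expansion holds for $t\ge\rho$ and $T_1(A,B)\le\rho$. Since $\sch(\gamma,d)=\gamma\wiel(1)+\rho=\rho$, and since all cycles then have length $\gamma$ so that $\girth=\gamma$ and $\kim(\gamma,\girth,d)=\girth(1-2)+d=\rho$, both bounds hold.

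For $q\ge 2$ I would prove the two bounds with two choices of $\subcrit$. For Kim's bound, pick $Z_l$ a shortest cycle of each component, so $l(Z_l)=g_l$ is its girth; Corollary~\ref{c:TcRLin} gives $T_{cr}^{g_l}(A,Z_l)-g_l+1\le g_l(q-2)+d$, which by $g_l\le\girth$ and $q-2\ge 0$ is at most $\girth(q-2)+d=\kim(\gamma,\girth,d)$. For Schwarz's bound, pick $Z_l$ an arbitrary cycle of each component and split on the dichotomy forced by $\gamma\mid l(Z_l)\le\gamma q$: if $l(Z_l)\le\gamma(q-1)$ then Corollary~\ref{c:TcRLin} gives $l(Z_l)(q-2)+d\le\gamma(q-1)(q-2)+d=\sch(\gamma,d)$, while if $l(Z_l)=\gamma q$ the linear bound is too weak and I would apply Proposition~\ref{p:TcRn} instead, obtaining $T_{cr}^{\gamma q}(A,Z_l)-\gamma q+1\le\gamma(q-1)^2+\gamma+\rho=\sch(\gamma,d)$. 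The maximum over $l$ then yields $T_1(A,B)\le\sch(\gamma,d)$.

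The hard part is that neither $T_{cr}$-estimate is uniformly sharp: a short computation shows Corollary~\ref{c:TcRLin} overshoots $\sch(\gamma,d)$ by $\gamma(q-2)$ exactly when a representing cycle attains the maximal length $\gamma q$, so for $q\ge 3$ the quadratic bound of Proposition~\ref{p:TcRn} is indispensable there; conversely Proposition~\ref{p:TcRn} itself overshoots $\rho$ when $q=1$, which is precisely why that case must be extracted and settled by Proposition~\ref{p:n/gamma=1}. The rest is bookkeeping: the identities $\gamma(q-1)(q-2)+d=\sch(\gamma,d)$ and $\gamma(q-1)^2+\gamma+\rho=\sch(\gamma,d)$, the observation that $q-2\ge 0$ makes the Kim estimate monotone in $g_l$, and the cycle-length dichotomy.
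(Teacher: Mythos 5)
Your proof is correct and follows essentially the same route as the paper's: both feed Corollary~\ref{c:TcRLin} and Proposition~\ref{p:TcRn} into~\eqref{e:t1B-ineq} with representing cycles (splitting on whether a cycle attains the maximal length $\gamma\lfloor d/\gamma\rfloor$), and both dispatch the case $\lfloor d/\gamma\rfloor=1$ via Proposition~\ref{p:n/gamma=1}. Your treatment is if anything slightly more explicit than the paper's, e.g.\ in verifying the $\lfloor d/\gamma\rfloor=1$ case against both bounds and in noting that monotonicity in $l(Z_l)$ requires $\lfloor d/\gamma\rfloor\ge 2$.
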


\begin{proof}
Let $\subcrit_1,\ldots,\subcrit_m$ be the s.c.c. of 
$\crit(A)$ and let $Z_1,\ldots,Z_m$ be the cycles of
minimal length in those components.
Using Corollary~\ref{c:TcRLin} with $Z=Z_k$
for any $k\in\{1,\ldots,m\}$, we have
\begin{align*}
    T_{cr}^{l(Z_k)} (A,Z_k) & \leq l(Z_k) \left\lfloor\frac{d}{\gamma}\right\rfloor + d - l(Z_k) -1,\\
    T_{cr}^{l(Z_k)}(A,Z_k) -l(Z_k) +1 & \leq l(Z_k)  \left( \left\lfloor\frac{d}{\gamma}\right\rfloor -2 \right) + d
\end{align*}
for all $k\in\{1,\ldots,m\}.$
Combining it with Proposition~\ref{p:TcrToT1}, we write
\begin{align*}
    T_1(A,B) & \leq 
    \max\limits_{k=1}^m (T_{cr}^{l(Z_k)} -l(Z_k) +1)  \leq 
    \max\limits_{k=1}^m l(Z_k)  \left( \left\lfloor\frac{d}{\gamma}\right\rfloor -2 \right) + d\\
    & =\girth\left( \left\lfloor\frac{d}{\gamma}\right\rfloor -2 \right) + d
    =\kim(\gamma,\girth,d). 
\end{align*}
Taking this further, when $\frac{l(Z_k)}{\gamma} \leq  \left\lfloor\frac{d}{\gamma}\right\rfloor -1$, we obtain
\begin{align*}
     l(Z_k)  \left( \left\lfloor\frac{d}{\gamma}\right\rfloor -2 \right) + d & = \gamma \left( \frac{ l(Z_k)}{\gamma}  \left( \left\lfloor\frac{d}{\gamma}\right\rfloor - 2 \right) + \frac{d}{\gamma} \right) \\
    & \leq \gamma 
    \left(\left\lfloor\frac{d}{\gamma}\right\rfloor-1\right) \left(\left\lfloor\frac{d}{\gamma}\right\rfloor-2\right)  + (d\rem\gamma)+ \gamma\left\lfloor\frac{d}{\gamma}\right\rfloor \\
    & = \gamma \left(\left\lfloor\frac{d}{\gamma}\right\rfloor^{2} -3\left\lfloor\frac{d}{\gamma}\right\rfloor+ 2 \right) +\gamma\left\lfloor\frac{d}{\gamma}\right\rfloor + (d\rem \gamma) \\
    & = \gamma \wiel\left(\left\lfloor\frac{d}{\gamma}\right\rfloor\right) + (d \rem \gamma)=\sch(\gamma,d).
\end{align*}
Otherwise, in the case when $\frac{l(Z_k)}{\gamma}=\left\lfloor\frac{d}{\gamma}\right\rfloor$ for some $k$
we use Proposition~\ref{p:TcRn} to obtain  
\begin{equation*}
\begin{split}
T_{cr}^{\gamma\left\lfloor \frac{d}{\gamma}\right\rfloor} 
(A,Z_k)-l(Z_k)+1
& \leq \gamma\left(\left\lfloor \frac{d}{\gamma}\right\rfloor-1\right)^2+\gamma+d-1-\gamma
\left\lfloor\frac{d}{\gamma}\right\rfloor+1\\
&= \gamma \wiel\left(\left\lfloor\frac{d}{\gamma}\right\rfloor\right)+(d\rem\gamma) =\sch(\gamma,d). 
\end{split}
\end{equation*}
Thus treating these two cases yields the first bound in Theorem~\ref{th:KimSchwarzNacht} in the case 
$\left\lfloor\frac{d}{\gamma}\right\rfloor>1$. The remaining case $\left\lfloor\frac{d}{\gamma}\right\rfloor=1$ was considered in Proposition~\ref{p:n/gamma=1}.
\end{proof}

\subsection{The case of cycle threshold expansion}

In this section we obtain a new bound for the Cycle Threshold scheme using the bounds for the cycle removal threshold obtained previously. It will use the 
following bound on $T_1(A,B):$

\begin{equation}
\label{e:t1CT-ineq}
T_1(A,B)\leq \max \left\lbrace T_{cr}^{l(\cycle)}(\cycle) + 1 \mid \cycle \text{ cycle in } \subcrit\right\rbrace     
\end{equation}
Here $\digr$ is a completely reducible subgraph of $\digr(A)$.

\begin{proposition}[{\cite[Proposition 6.5]{wCSR}}]\label{p:TcrToT1CT}
When $B=B_{CT}$, bound~\eqref{e:t1CT-ineq} holds 
with $\digr=\subcrit^{ct}$.
\end{proposition}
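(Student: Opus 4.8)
The plan is to argue entirely at the level of optimal walks and then to feed in the bounds on the cycle removal threshold for the cycles of $\ctgr$. Throughout I would assume, as permitted, that $\lambda(A)=0$, and set $T^{\ast}=\max\{T_{cr}^{l(\cycle)}(A,\cycle)+1\mid \cycle\text{ a cycle in }\ctgr\}$; the goal is to show $A^{t}=CS^{t}R\oplus B^{t}$ for every $t\ge T^{\ast}$. The starting point is the walk decomposition coming from~\eqref{e:optwalkint}: since $B=B_{CT}$ kills exactly the nodes of $\ctgr$, every walk of length $t$ from $i$ to $j$ either meets $\ctgr$ or avoids it, so
\begin{equation*}
A^{t}_{ij}=p(\walkslennode{i}{j}{t}{\ctgr})\oplus B^{t}_{ij},
\end{equation*}
with $B^{t}\le A^{t}$ always holding. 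Hence it suffices to prove, for $t\ge T^{\ast}$, the two inequalities (a) $(CS^{t}R)_{ij}\le A^{t}_{ij}$ and (b) $p(\walkslennode{i}{j}{t}{\ctgr})\le (CS^{t}R)_{ij}$; combining these with the decomposition yields $A^{t}_{ij}=(CS^{t}R)_{ij}\oplus B^{t}_{ij}$ as desired.

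For (b) I would take an optimal $W\in\walkslennode{i}{j}{t}{\ctgr}$ and let $\cycle$ be a cycle of $\ctgr$ through the node at which $W$ meets $\ctgr$ (one exists since $\ctgr$ is a union of s.c.c.). As $l(W)=t\ge T^{\ast}>T_{cr}^{l(\cycle)}(A,\cycle)$, Definition~\ref{def:Tcr} furnishes a walk $V$ through $\cycle$ with $l(V)\le T_{cr}^{l(\cycle)}(A,\cycle)<t$ and $l(V)\equiv_{l(\cycle)}t$, obtained from $W$ by deleting closed subwalks and inserting copies of $\cycle$. When $\cycle$ is critical this cannot decrease the weight (deleted cycles have nonpositive weight and $\cycle$ has weight $0$ because $\lambda(A)=0$), so $p(V)\ge p(W)$; padding $V$ with $(t-l(V))/l(\cycle)$ further copies of $\cycle$ produces a walk through $\crit(A)$ of length exactly $t$, whence $p(W)\le p(V)\le p(\walkslennode{i}{j}{t,\sigma}{\crit(A)})=(CS^{t}R)_{ij}$, using the optimal-walk interpretation of the $CSR$ terms. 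Inequality (a) is the milder direction: once $t$ is large the CSR expansion~\eqref{e:CSR-exp} gives $A^{t}\ge CS^{t}R$ outright, and one checks that $T^{\ast}$ already exceeds the threshold at which this holds; alternatively it can be obtained by the same shorten-then-pad argument applied to the optimal critical walk realizing $(CS^{t}R)_{ij}$ through one of its (critical) cycles.

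The main obstacle is precisely the case in (b) in which the cycle $\cycle$ met by $W$ is \emph{subcritical}: inserting copies of such a $\cycle$ strictly lowers the weight, so the reduction no longer guarantees $p(V)\ge p(W)$. This is where the geometry of the cycle threshold scheme must be used: each s.c.c. of $\ctgr$ contains an s.c.c. of $\crit(A)$, so I would first reroute the portion of the optimal walk that lives inside that component onto a critical cycle of the same component before applying the removal step, exploiting strong connectivity to preserve the endpoints and the fact that only critical cycles sustain their weight under repeated traversal. A secondary technical point, routine but needing care, is the reconciliation of the two moduli: the threshold controls $l(V)$ modulo $l(\cycle)$, whereas $CS^{t}R$ has period $\sigma=\operatorname{cr}$-free cyclicity of $\crit(A)$. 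This is handled by padding only with copies of the single cycle $\cycle$, so that the congruence $l(V)\equiv_{l(\cycle)}t$ is exactly what is needed to reach length $t$, with no appeal to the global period $\sigma$ or to an exploration penalty $\ep^{\sigma}$ — which is also why the CT bound~\eqref{e:t1CT-ineq} carries a plain $+1$ rather than the sharper $-\sigma_{l}+1+\ep^{\sigma_{l}}$ correction appearing in~\eqref{e:t1B-ineq}.
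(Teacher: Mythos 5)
First, a contextual point: the paper does not prove this proposition at all. It is imported verbatim from \cite[Proposition~6.5]{wCSR} (exactly as Proposition~\ref{p:TcrToT1} is), so there is no in-paper argument to compare against, and your attempt has to stand on its own. As a standalone proof it has a genuine gap, and it sits precisely at the point you flag yourself. Your inequality (b) splits into two cases, and only the easy one is handled. When the optimal walk $W\in\walkslennode{i}{j}{t}{\ctgr}$ passes through a node of $\crit(A)$, the shorten-then-pad argument with a critical cycle $\cycle$ works, since removals cannot decrease weight and insertions of a zero-weight critical cycle are free. But when $W\in\walkslennode{i}{j}{t}{\ctgr}\setminus\walkslennode{i}{j}{t}{\crit(A)}$, every cycle of $\ctgr$ available to you has strictly negative weight, and Definition~\ref{def:Tcr} then only yields a walk $V$ with $p(V)\geq p(W)+m\,p(\cycle)$, where $m$ is the number of inserted copies of $\cycle$; this is not $\geq p(W)$, and the padded walk need not touch $\crit(A)$ at all, so no comparison with $(CS^tR)_{ij}$ follows. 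Your proposed repair --- ``reroute the portion of the walk inside that component onto a critical cycle'' --- is not a routine step but is exactly the missing scheme-dependent lemma: the arcs connecting the walk to the critical s.c.c.\ have uncontrolled, possibly very negative weights, so strong connectivity alone does not give a rerouted walk of weight $\geq p(W)$. Handling this case is the entire content of the Cycle Threshold version of \cite[Proposition~6.5]{wCSR}; it requires the maximality property of $\mu^{ct}$ (for $\mu>\mu^{ct}$, every s.c.c.\ of $\thrct(\mu)$ contains a critical s.c.c.), and it is also why \eqref{e:t1CT-ineq} takes the maximum of $T_{cr}$ over \emph{all} cycles of $\ctgr$, subcritical ones included --- if your critical-cycle argument sufficed, a maximum over critical cycles only, as in \eqref{e:t1B-ineq}, would do.

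There is a secondary problem in part (a). The appeal to \eqref{e:CSR-exp} is circular: the $T$ appearing there is only ``some big enough integer'', with no quantitative relation to your $T^{\ast}$, so ``one checks that $T^{\ast}$ already exceeds the threshold'' is not available. The fallback shorten-then-pad argument has a modulus mismatch: by \eqref{e:optwalkint} the optimal walk realizing $(CS^tR)_{ij}$ has length congruent to $t$ only modulo the global cyclicity $\sigma$ of $\crit(A)$, whereas your reduction and padding with a single critical cycle $\cycle$ controls lengths modulo $l(\cycle)$, and $\sigma$ (an lcm over components) need not divide $l(\cycle)$, so you cannot in general reach length exactly $t$. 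This is fixable, but only by replacing \eqref{e:optwalkint} with the refined per-component walk interpretation of $CS^tR$ from \cite[Theorem~6.1, Corollary~6.2]{wCSR} --- the one this paper itself invokes in the proof of Proposition~\ref{p:T2Tcr} --- taking a union of critical cycles as the representing subgraph, so that the relevant congruence is modulo $l(\cycle_\nu)$ from the start.
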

Subgraph $\digr^{ct}$ was defined when we described Cycle Threshold scheme in Subsection~\ref{ss:CSR}. Note that this is a completely reducible subgraph of $\digr(A)$ that contains $\crit(A)$.

\begin{theorem} \label{th:T1CT}
If bound~\eqref{e:t1CT-ineq} holds, and in particular for $B=B_{\ct}(A)$, then
we also have the following bound:
\begin{equation*}
T_1(A,B) \le \gamma \left( \left\lfloor\frac{d}{\gamma} \right\rfloor -1 \right)^{2} +d +\gamma    
\end{equation*}
\end{theorem}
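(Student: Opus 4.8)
The plan is to feed the two cycle-removal bounds from Section~\ref{s:Thresh} into the reduction~\eqref{e:t1CT-ineq}, splitting on the length of the cycle. By Proposition~\ref{p:TcrToT1CT} the bound~\eqref{e:t1CT-ineq} holds for $B=B_{\ct}(A)$ with $\digr=\digr^{ct}$, so it suffices to bound $T_{cr}^{l(\cycle)}(A,\cycle)+1$ for every cycle $\cycle$ of the completely reducible subgraph $\digr$ and then take the maximum. Since $\digr$ is a subgraph of $\digr(A)$, each such $\cycle$ is an elementary cycle of $\digr(A)$. Writing $\ell=l(\cycle)$, its length is a multiple of the cyclicity $\gamma$ (as $\gamma$ divides every cycle length of $\digr(A)$), and, because an elementary cycle visits the same number of nodes in each cyclic class while the smallest cyclic class has at most $\lfloor d/\gamma\rfloor$ nodes, one has $\ell\le\gamma\lfloor d/\gamma\rfloor$. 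Together with $\gamma\mid\ell$ this leaves exactly two regimes: either $\ell\le\gamma(\lfloor d/\gamma\rfloor-1)$ or $\ell=\gamma\lfloor d/\gamma\rfloor$.

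In the sub-maximal regime $\ell\le\gamma(\lfloor d/\gamma\rfloor-1)$, I would apply Corollary~\ref{c:TcRLin}, rewrite its right-hand side as $\ell\bigl(\lfloor d/\gamma\rfloor-1\bigr)+d-1$, add $1$, and then substitute the bound $\ell\le\gamma(\lfloor d/\gamma\rfloor-1)$. This gives $T_{cr}^{\ell}(A,\cycle)+1\le\gamma(\lfloor d/\gamma\rfloor-1)^2+d$, which is at most the claimed quantity $\gamma(\lfloor d/\gamma\rfloor-1)^2+d+\gamma$.

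In the maximal regime $\ell=\gamma\lfloor d/\gamma\rfloor$, the cycle $\cycle$ is elementary of exactly the length required by Proposition~\ref{p:TcRn}, which applies directly and yields $T_{cr}^{\ell}(A,\cycle)+1\le\gamma(\lfloor d/\gamma\rfloor-1)^2+d+\gamma$, matching the claim exactly. Taking the maximum over all cycles $\cycle$ of $\digr$ and inserting into~\eqref{e:t1CT-ineq} then finishes the proof.

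The computations are routine; the genuine content is the case split, and the tight case is the maximal-length one, whose bound is the Wielandt-type estimate of Proposition~\ref{p:TcRn}, while the sub-maximal case is comfortably dominated by the linear bound of Corollary~\ref{c:TcRLin}. The one point I would be careful to justify before splitting is that no intermediate length occurs: this rests on $\ell$ being a multiple of $\gamma$ combined with the structural fact that the maximal elementary cycle length in an irreducible digraph of cyclicity $\gamma$ on $d$ nodes equals $\gamma\lfloor d/\gamma\rfloor$. I would therefore record the standing irreducibility assumption (inherited from Theorem~\ref{th:KimSchwarzNacht}) explicitly, since both invoked results use the cyclic-class decomposition of $\digr(A)$.
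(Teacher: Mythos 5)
Your proposal is correct and follows essentially the same route as the paper: reduce via Proposition~\ref{p:TcrToT1CT} to bounding $T_{cr}^{l(\cycle)}(A,\cycle)+1$ over cycles of $\digr^{ct}$, split on whether $l(\cycle)=\gamma\lfloor d/\gamma\rfloor$ or $l(\cycle)\le\gamma(\lfloor d/\gamma\rfloor-1)$, and apply Proposition~\ref{p:TcRn} in the maximal case and Corollary~\ref{c:TcRLin} in the sub-maximal case. The only cosmetic difference is that you inline the arithmetic comparison that the paper isolates as Lemma~\ref{l:Tcrcomp}.
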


Before we prove this theorem, we first introduce the following lemma

\begin{lemma}\label{l:Tcrcomp}
Let $A\in\Rmax^{d\times d}$ and $\cycle$ be an elementary cycle of $\digr(A)$ of length $l(\cycle)$ and let $\digr(A)$ have cyclicity $\gamma$. Then either $l(Z)=\gamma\left\lfloor\frac{d}{\gamma}\right\rfloor $ or
\begin{equation} \label{eq:Tcrcomp}
    l(\cycle)\left(\left\lfloor\frac{d}{\gamma}\right\rfloor -1 \right) + d \leq \gamma \left(\left\lfloor\frac{d}{\gamma}\right\rfloor -1 \right)^2 +d +\gamma-1.
\end{equation}
\end{lemma}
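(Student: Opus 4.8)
The plan is to read everything off the cyclic-class decomposition of $\digr(A)$ recalled in Section~\ref{s:prel} and then close with one elementary algebraic estimate. Writing $q=\left\lfloor\frac{d}{\gamma}\right\rfloor$, the whole statement reduces to the bound $l(\cycle)\le\gamma q$ on the length of an elementary cycle, so the argument is essentially combinatorial and I do not expect a real obstacle.

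First I would recall that, $\digr(A)$ being strongly connected with cyclicity $\gamma$, its node set partitions into the cyclic classes $N_1,\ldots,N_\gamma$; after putting $A$ into the form~\eqref{eq:A} every arc runs from some $N_i$ to $N_{i+1}$ (indices mod $\gamma$), so every cycle has length a multiple of $\gamma$ and, step by step, cycles through the classes in order. Consequently an elementary cycle $\cycle$ meets each cyclic class in exactly $l(\cycle)/\gamma$ nodes, and these are pairwise distinct because $\cycle$ is elementary. In particular the smallest cyclic class contains at least $l(\cycle)/\gamma$ nodes.

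Next I would bound the smallest cyclic class from above. If every class had at least $q+1$ nodes, the total would be at least $\gamma(q+1)=\gamma q+\gamma>d$, a contradiction; hence the smallest class has at most $q$ nodes (note $q\ge 1$ since $\gamma\le d$). Combining this with the previous paragraph gives $l(\cycle)/\gamma\le q$, i.e. $l(\cycle)\le\gamma q$. As $l(\cycle)$ is a multiple of $\gamma$, either $l(\cycle)=\gamma q=\gamma\left\lfloor\frac{d}{\gamma}\right\rfloor$, which is the first alternative of the statement, or $l(\cycle)\le\gamma(q-1)$.

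Finally, in the remaining case $l(\cycle)\le\gamma(q-1)$ I would just estimate the left-hand side of~\eqref{eq:Tcrcomp}, using $q-1\ge0$ and $\gamma\ge1$:
\begin{equation*}
l(\cycle)(q-1)+d\le\gamma(q-1)(q-1)+d=\gamma(q-1)^2+d\le\gamma(q-1)^2+d+\gamma-1,
\end{equation*}
which is exactly~\eqref{eq:Tcrcomp} and finishes the proof. The only step that deserves care is the claim that an elementary cycle hits each cyclic class the same number of times and in distinct nodes; this is immediate from the block structure~\eqref{eq:A}, but it is precisely what forces the length bound $l(\cycle)\le\gamma\left\lfloor\frac{d}{\gamma}\right\rfloor$ and thereby drives the whole argument.
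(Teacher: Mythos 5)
Your proof is correct and follows essentially the same route as the paper: both arguments reduce everything to the bound $l(\cycle)\le\gamma\left\lfloor\frac{d}{\gamma}\right\rfloor$, so that in the non-maximal case $l(\cycle)\le\gamma\left(\left\lfloor\frac{d}{\gamma}\right\rfloor-1\right)$, and then conclude by the same substitution into the left-hand side of~\eqref{eq:Tcrcomp}. The only cosmetic difference is that the paper obtains the length bound directly from the facts that an elementary cycle has length at most $d$ and that every cycle length is a multiple of $\gamma$, whereas you derive it via the cyclic-class pigeonhole argument --- slightly longer, but equally valid.
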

\begin{proof}
As the length of any cycle is a multiple of $\gamma$, we see that $\gamma\left\lfloor\frac{d}{\gamma}\right\rfloor$ is the biggest possible length of a cycle of $\digr(A)$. As all other lengths of cycles are bounded by $l(\cycle)\leq\gamma\left\lfloor\frac{d}{\gamma}\right\rfloor - \gamma$, we substitute it into the left hand side of the inequality~\eqref{eq:Tcrcomp} to give
\begin{align*}
     l(\cycle)\left(\left\lfloor\frac{d}{\gamma}\right\rfloor -1 \right) + d & \leq  \left(\gamma\left\lfloor\frac{d}{\gamma}\right\rfloor - \gamma\right) \left(\left\lfloor\frac{d}{\gamma}\right\rfloor -1 \right) + d \\
     &\leq \gamma \left(\left\lfloor\frac{d}{\gamma}\right\rfloor -1 \right)^2 +d + \gamma-1,
\end{align*}
as required.
\end{proof}
\begin{proof}[Proof of Theorem~\ref{th:T1CT}]
We can split this proof into two distinct cases, the first is when there is a cycle of maximal length, which is $l(\cycle)=\gamma\left\lfloor\frac{d}{\gamma}\right\rfloor$, and the second is when every cycle has length that is smaller than the maximal possible length, i.e., $l(\cycle) < \gamma\left\lfloor\frac{d}{\gamma}\right\rfloor$.

For the first case we can use Proposition~\ref{p:TcRn} for maximal cycle length to give
\begin{equation}
\label{e:firstcase}
    T_{cr}^{\gamma\left\lfloor\frac{d}{\gamma}\right\rfloor}(A,Z) \le \gamma \left( \left\lfloor\frac{d}{\gamma}\right\rfloor -1 \right)^{2} +d +\gamma -1
\end{equation}

Turning to the second case, we can use Corollary~\ref{c:TcRLin}, which 
means that
\begin{align*}
    T_{cr}^{l(\cycle)}(A,Z) & \le l(\cycle) \left\lfloor\frac{d}{\gamma}\right\rfloor +d -l(\cycle) -1, \\
    T_{cr}^{l(\cycle)}(A,Z) + 1 & \le l(\cycle) \left\lfloor\frac{d}{\gamma}\right\rfloor +d -l(\cycle) 
       = l(\cycle) \left( \left\lfloor\frac{d}{\gamma}\right\rfloor -1 \right) +d.
\end{align*}
We can use Lemma~\ref{l:Tcrcomp} to bound this from above to get
\begin{equation}
\label{e:secondcase}
    T_{cr}^{l(Z)}(A,Z)+1 \le  l(\cycle)\left(\left\lfloor\frac{d}{\gamma}\right\rfloor -1 \right) + d \leq \gamma \left(\left\lfloor\frac{d}{\gamma}\right\rfloor -1 \right)^2 +d +\gamma.
\end{equation}

Using~\eqref{e:firstcase} and~\eqref{e:secondcase}
we obtain
\begin{equation*}
    T_1(A,B) \le  \max_{\cycle} \left\lbrace T_{cr}^{\gamma\left\lfloor\frac{d}{\gamma}\right\rfloor}(A,Z) +1 \right\rbrace \le \gamma \left( \left\lfloor\frac{d}{\gamma}\right\rfloor -1 \right)^{2} +d +\gamma.
\end{equation*}

\end{proof}

\section{Bounds for $T_2(A,B)$}

\label{s:T2AB}

In this section we develop new bounds for $T_2(A,B)$,
where $B$ is a {\em subordinate} to $A$, i.e., a matrix obtained 
from $A$ by setting some entries of $A$ to $-\infty$ 
and keeping all other entries the same as in $A$. In particular, $B_{\nacht}(A)$, $B_{\harg}(A)$ and $B_{\ct}(A)$ are 
subordinate matrices. 

These new bounds, presented in Theorem~\ref{th:T2TcrBounds},  are based on 1) an improvement of a bound of~\cite{wCSR} connecting $T_2(A,B)$ with $T_{cr}$ (which we replace with $T_{wr}$) and 2) new bounds on $T_{cr}$ and $T_{wr}$ obtained in Section~\ref{s:Thresh}.

\if{
a new type of bound for the CSR threshold which does not depend on the type of decomposition of $B$. It only requires that $B$ is subordinate to $A$ i.e. $B$ is extracted from $A$ by setting rows and/or columns to the max-plus zero ($-\infty$). We begin with a defintion.
}\fi

Let us recall the definition of $T_2(A,B)$, for a given irreducible $A\in\Rmax^{d\times d}$ and for a subordinate $B$ of $A$: by $T_2(A,B)$ 
we denote the smallest integer $T$ satisfying
\begin{equation}\label{eq:T2}
    \forall \; t\geq T, \; \; \; \lambda^{\otimes t}\otimes CS^{t}R[A] \geq B^{t}.
\end{equation}
In the paper~\cite{wCSR}, multiple bounds were developed for $T_{2}$ using bounds for the cycle removal threshold (from the same paper). We are going to improve the following 
bounds from \cite{wCSR}:
\begin{proposition}[{\cite[Theorem 4.5]{wCSR}}] \label{th:T2Bounds}
Let $A \; \in \; \Rmax^{d\times d}$ be irreducible and let $B$ be subordinate to $A$. Denote by $\cd_{B} = \cd(\digr(B))$ the length of the longest path in the associated digraph of $B$ and by $\hat{\sigma}$ the maximal cyclicity of the components of $\crit(A)$.
If $\sr(B) = -\infty$, then $T_{2}(A,B) \leq \cd_{B} +1 \leq n_{B}$. Otherwise we have the following bounds

\begin{equation} \label{eq:T2Bound1}
    T_2(A,B) \le \frac{(d^2-d+1)(\sr(A)-\min_{ij}a_{ij})+\cd_{B}(\max_{ij}b_{ij}-\sr(B))}{\sr(A)-\sr(B)}
\end{equation}
\begin{equation} 
    \label{eq:T2Bound2}
    T_2(A,B) \le \frac{(\hat{\sigma}(d-1)+d-1)(\sr(A)-\min_{ij}a_{ij})+\cd_{B}(\max_{ij}b_{ij}-\sr(B))}{\sr(A)-\sr(B)}
\end{equation}
\end{proposition}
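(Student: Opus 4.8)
The plan is to prove the defining inequality~\eqref{eq:T2} entrywise, comparing the optimal-walk interpretations~\eqref{e:optwalkint} of the two sides. Scaling the second identity in~\eqref{e:optwalkint} by $\sr(A)^{\otimes t}$ gives, for every $i,j$,
\[
(\sr(A)^{\otimes t}\otimes CS^tR[A])_{ij}=\max\bigl\{p(W)+(t-l(W))\,\sr(A)\colon W\in\walkslennode{i}{j}{t,\sigma}{\crit(A)}\bigr\},
\]
where $\sigma$ is the cyclicity of $\crit(A)$; so the left side of~\eqref{eq:T2} is the best weight of a walk through $\crit(A)$ of length congruent to $t$ modulo $\sigma$, every ``missing'' edge being credited at rate $\sr(A)$. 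The case $\sr(B)=\0$ is immediate: then $\digr(B)$ is acyclic, so no walk on it exceeds length $\cd_B$, whence $B^t$ is the all-$\0$ matrix for $t\ge\cd_B+1$ and~\eqref{eq:T2} holds trivially; moreover a path has at most $n_B$ nodes, so $\cd_B+1\le n_B$.

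Assume now $\sr(B)>\0$. For the upper bound on $B^t$ I would take an optimal walk $W\in\walkslen{i}{j}{t}$ on $\digr(B)$ and strip it into a path $P$ from $i$ to $j$ together with cycles of $\digr(B)$. Since $l(P)\le\cd_B$, each edge of $P$ weighs at most $\max_{ij}b_{ij}$, and each stripped cycle has mean at most $\sr(B)$, this yields
\[
(B^t)_{ij}=p(W)\le t\,\sr(B)+\cd_B\bigl(\max_{ij}b_{ij}-\sr(B)\bigr),
\]
using $\max_{ij}b_{ij}\ge\sr(B)\ge\min_{ij}a_{ij}$.

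For the lower bound on the left side of~\eqref{eq:T2} I would produce, for every $(i,j)$ with $(B^t)_{ij}>\0$ and every large $t$, a walk $V\in\walkslennode{i}{j}{t,\sigma}{\crit(A)}$ of small length. By irreducibility $i$ reaches a critical strongly connected component $\mathcal C$ which in turn reaches $j$; since all $i$--$j$ walks share one length modulo the cyclicity of $\digr(A)$, and that cyclicity divides the cyclicity of $\mathcal C$, the residue $t\bmod\sigma$ is realizable by such a walk once $t$ exceeds the exploration penalty of $\mathcal C$. Applying the cycle removal threshold to $\subcrit=\crit(A)$ (Definition~\ref{def:Tcr}), this walk may be reduced, keeping its residue modulo $\sigma$, to length $l_0\le\alpha$, where $\alpha$ is a bound on $T_{cr}^{\sigma}(A,\crit(A))$. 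Bounding each of its edge weights below by $\min_{ij}a_{ij}$ and using the displayed formula gives
\[
(\sr(A)^{\otimes t}\otimes CS^tR[A])_{ij}\ge p(V)+(t-l_0)\,\sr(A)\ge t\,\sr(A)-\alpha\bigl(\sr(A)-\min_{ij}a_{ij}\bigr).
\]
Combining the two estimates, \eqref{eq:T2} holds as soon as $t\,(\sr(A)-\sr(B))\ge\alpha\,(\sr(A)-\min_{ij}a_{ij})+\cd_B\,(\max_{ij}b_{ij}-\sr(B))$, i.e. for $t$ at least the claimed threshold with $\alpha$ in place of $d^2-d+1$ respectively $\hat\sigma(d-1)+d-1$. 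The two displayed bounds then follow by substituting the corresponding two bounds on $T_{cr}^{\sigma}(A,\crit(A))$ from~\cite{wCSR}: the Wielandt-type value $d^2-d+1$ and the cyclicity-type value $\hat\sigma(d-1)+d-1$.

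The main obstacle is the lower bound, and specifically the existence of a critical walk of the correct residue: one must argue that whenever $(B^t)_{ij}>\0$ for large $t$ a walk through $\crit(A)$ of length $\equiv t\pmod\sigma$ is available, and then that its length can be compressed to at most $\alpha$ by the cycle removal threshold rather than by a crude $2(d-1)$ path estimate (which would only give a linear, residue-insensitive bound). Checking that the resulting threshold dominates $\alpha$—so that the padding length $t-l_0$ is nonnegative—reduces to $\min_{ij}a_{ij}\le\sr(B)$, which holds because every cycle of $\digr(B)$ is a cycle of $\digr(A)$ and hence has mean at least $\min_{ij}a_{ij}$. The weight comparison and the $\sr(B)=\0$ case, by contrast, are routine.
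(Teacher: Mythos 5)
Your overall strategy --- entrywise comparison of the two sides via optimal-walk interpretations, an upper bound on $B^t$ by stripping an optimal walk into a path plus cycles, and a lower bound on the CSR term by exhibiting one short walk of the correct residue and padding with $\sr(A)$ per missing edge --- is exactly the template of the paper's proof of the analogous Proposition~\ref{p:T2Tcr} (itself modelled on \cite[Theorem~10.1]{wCSR}). Your treatment of the case $\sr(B)=\0$ and your estimate $(B^t)_{ij}\le t\,\sr(B)+\cd_B(\max_{ij}b_{ij}-\sr(B))$ are correct.

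The genuine gap is in the lower bound, where you apply the cycle removal threshold to $\subcrit=\crit(A)$ with modulus $\sigma$, the cyclicity of $\crit(A)$. In the paper's conventions $\sigma$ is the \emph{least common multiple} of the cyclicities of the s.c.c.'s of $\crit(A)$, while $\hat\sigma$ is their maximum; these can differ substantially (components of cyclicities $2$ and $3$ give $\sigma=6$ but $\hat\sigma=3$). The constants you substitute for $\alpha$ are not bounds on $T_{cr}^{\sigma}(A,\crit(A))$ and are not claimed as such in~\cite{wCSR}: the value $d^2-d+1$ bounds $T_{cr}^{l(\cycle)}(A,\cycle)$ for a single Hamiltonian critical cycle with modulus $l(\cycle)=d$ (Proposition~\ref{p:TcrHAWielandt}), and $\hat\sigma(d-1)+d-1$ arises by applying Proposition~\ref{p:TcRLin} to a single s.c.c.\ $\subcrit_\nu$ with its own cyclicity $\sigma_\nu\le\hat\sigma$ as modulus, using $T_{cr}^{\sigma_\nu}(A,\subcrit_\nu)\le\sigma_\nu d+d-d_1-1\le\sigma_\nu(d-1)+d-1$ (as $d_1\ge\sigma_\nu$). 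With $\subcrit=\crit(A)$ and the lcm modulus, Proposition~\ref{p:TcRLin} only yields $\sigma d+d-d_1-1$, which exceeds both claimed constants whenever $\sigma>\hat\sigma$, so your substitution step fails in general. The fix is what the paper does in Proposition~\ref{p:T2Tcr}: use the component-wise (enhanced) walk interpretation of CSR from \cite[Theorem~6.1, Corollary~6.2]{wCSR}, namely $(CS^tR[\tilde A])_{ij}=\max_\nu p\left(\walkslennode{i}{j}{t,\sigma_\nu}{\subcrit_\nu}\right)$, pick a component $\nu$ (or one critical cycle per component) whose walk set is nonempty, and invoke the removal threshold for that component with modulus $\sigma_\nu$ (resp.\ for that cycle with modulus its length). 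Relatedly, your nonemptiness argument is also too weak as stated: inserting closed walks of the critical component $\mathcal C$ only changes lengths by multiples of the cyclicity of $\mathcal C$, which need not realize the required residue modulo $\sigma$; the paper avoids this by citing \cite[Lemma~10.2]{wCSR}, which asserts that finiteness of $B^t_{ij}$ forces finiteness of $(CS^tR[A])_{ij}$.
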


To be able to use the cycle removal thresholds developed in this paper we need the following proposition. 

\begin{proposition} \label{p:T2Tcr}
Let $A$ be an irreducible matrix and $\tilde{A}=A-\sr(A)$, $\subcrit$ be a representing subgraph of $\subcrit^{c}(A)$ with s.c.c.'s $\subcrit_{1},\ldots,\subcrit_{m}$ and let $\sigma_l$ be the cyclicity of $\subcrit_{l}$. Let $B$ be subordinate to $A$ such that $\sr(B)\neq -\infty$. Then
\begin{equation} \label{eq:T2Tcr}
    T_2(A,B) \le \frac{\max_{i}(T_{wr}^{\sigma_i}(\Tilde{A},\subcrit_i))(\sr(A)-\min_{ij}a_{ij})+\cd_{B}(\max_{ij}b_{ij}-\sr(B)).}{\sr(A)-\sr(B)}.
\end{equation}
\end{proposition}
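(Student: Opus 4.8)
The plan is to establish the entrywise dominance $CS^tR[A]\ge B^t$ for all $t$ at least the stated quantity, comparing the two sides through their optimal walk interpretations. Write $T:=\max_i T_{wr}^{\sigma_i}(\tilde A,\subcrit_i)$. Since $T_2(A,B)$ and the whole right-hand fraction are invariant under tropical scalar multiplication (as observed after the description of the three schemes), I may assume $\lambda(A)=0$, so that $\tilde A=A$ and $\min_{ij}a_{ij}\le\lambda(A)=0$; moreover $\lambda(B)\le 0$, and if $\lambda(B)=0$ the denominator vanishes and the bound is vacuous, so assume $\lambda(B)<0$. Fix $(i,j)$; if $(B^t)_{ij}=-\infty$ there is nothing to prove, so assume there is a $B$-walk (hence an $A$-walk) from $i$ to $j$ of length $t$. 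I will bound $(B^t)_{ij}$ above by a quantity affine and decreasing in $t$ and $(CS^tR[A])_{ij}$ below by a constant, and then compare.

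\textbf{Upper bound on $B^t$.} Decompose an optimal walk $W$ of length $t$ realizing $(B^t)_{ij}$ into its reduced path $P$ and a family of cycles of $\digr(B)$, so that $t=l(P)+\sum_C l(C)$ and, weights being additive, $p(W)=p(P)+\sum_C p(C)$. Each cycle obeys $p(C)\le l(C)\lambda(B)$ because $\lambda(B)$ is the maximal cycle mean of $B$, while $p(P)\le l(P)\max_{ij}b_{ij}\le\cd_B\max_{ij}b_{ij}$ since $l(P)\le\cd_B$. Using $\max_{ij}b_{ij}-\lambda(B)\ge 0$ together with $l(P)\le\cd_B$ this yields $(B^t)_{ij}\le t\lambda(B)+\cd_B(\max_{ij}b_{ij}-\lambda(B))$.

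\textbf{Lower bound on $CS^tR$.} First, $(CS^tR[A])_{ij}\ne-\infty$: the existence of an $i\to j$ walk of length $t$ pins $t$ to the residue class modulo $\gamma$ shared by all $i\to j$ walks, and since $\gamma\mid\sigma$, irreducibility furnishes $i\to j$ walks of every sufficiently large length $t+\sigma k$; by Lemma~\ref{l:CSR-limit}(i), $(CS^tR[A])_{ij}=\lim_k(A^{t+\sigma k})_{ij}\ne-\infty$. I then invoke the enhanced optimal walk interpretation attached to the representing subgraph $\subcrit$, $(CS^tR[A])_{ij}=\max_l p(\walkslennode{i}{j}{t,\sigma_l}{\subcrit_l})$, to produce an index $l$ and a walk $W_0$ from $i$ to $j$ through $\subcrit_l$ with $l(W_0)\equiv t\pmod{\sigma_l}$. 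Applying the defining property of $T_{wr}^{\sigma_l}(A,\subcrit_l)$ (trivially if $W_0$ is already short) gives a walk $V\in\walkslennode{i}{j}{t,\sigma_l}{\subcrit_l}$ with $l(V)\le T_{wr}^{\sigma_l}(A,\subcrit_l)\le T$. As each arc weighs at least $\min_{ij}a_{ij}\le 0$, we get $(CS^tR[A])_{ij}\ge p(V)\ge l(V)\min_{ij}a_{ij}\ge T\min_{ij}a_{ij}$.

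Combining the two estimates, the dominance $(CS^tR[A])_{ij}\ge(B^t)_{ij}$ holds whenever $T\min_{ij}a_{ij}\ge t\lambda(B)+\cd_B(\max_{ij}b_{ij}-\lambda(B))$, which upon dividing by $-\lambda(B)>0$ is precisely $t\ge\bigl(T(\lambda(A)-\min_{ij}a_{ij})+\cd_B(\max_{ij}b_{ij}-\lambda(B))\bigr)/(\lambda(A)-\lambda(B))$ in the normalization $\lambda(A)=0$; the invariance noted above then delivers the bound for general $A$. I expect the lower bound to be the crux: one must guarantee that a walk through the \emph{representing} component $\subcrit_l$ with length $\equiv t\pmod{\sigma_l}$ genuinely exists for the given $t$ — not merely some walk through $\crit(A)$ in a possibly incompatible residue class — and that $T_{wr}^{\sigma_l}$ legitimately applies to it. This is exactly where the enhanced walk interpretation of \cite{wCSR} and the divisibilities $\gamma\mid\sigma_l\mid\sigma$ do the work; the remainder is routine weight bookkeeping. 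Note that only the length bound $l(V)\le T$ from the $T_{wr}$-reduction is used, so the improvement over the $T_{cr}$-based estimate of Proposition~\ref{th:T2Bounds} comes entirely from $T_{wr}\le T_{cr}$.
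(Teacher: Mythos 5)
Your overall strategy coincides with the paper's: normalize to $\lambda(A)=0$, bound $(B^t)_{ij}$ above by $t\lambda(B)+\cd_B(\max_{ij}b_{ij}-\lambda(B))$ via the path-plus-cycles decomposition, bound $(CS^tR[A])_{ij}$ below by $T\min_{ij}a_{ij}$ using the enhanced walk interpretation $(CS^tR[A])_{ij}=\max_l p(\walkslennode{i}{j}{t,\sigma_l}{\subcrit_l})$ together with the defining property of $T_{wr}^{\sigma_l}$, and then compare. That bookkeeping is sound, and your closing remark that only the length bound $l(V)\le T$ is used (so the gain over the $T_{cr}$-based bound is exactly $T_{wr}\le T_{cr}$) is also correct.

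The genuine gap is your finiteness step. You argue that since $i\to j$ walks exist for every sufficiently large length $t+\sigma k$, Lemma~\ref{l:CSR-limit}(i) yields $(CS^tR[A])_{ij}=\lim_k(A^{t+\sigma k})_{ij}\ne-\infty$. That inference is invalid: in the topology of $\Rmax$ a sequence of finite numbers can converge to $-\infty$. Indeed this is precisely what happens to the entries of $B^{t+\sigma k}$ with $B=\bnacht[A]$, which are finite for all large $k$ yet tend to $-\infty$ because $\sr(B)<0$; a priori the optimal walks of length $t+\sigma k$ could be forced to spend their length on non-critical cycles of negative mean, in which case the limit is $-\infty$. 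So finiteness of the terms tells you nothing about finiteness of the limit. The implication you actually need --- if $(B^t)_{ij}\ne-\infty$ then some set $\walkslennode{i}{j}{t,\sigma_l}{\subcrit_l}$ is nonempty --- is true, but it requires a combinatorial construction rather than a limit argument: for instance, append to the given length-$t$ walk a closed walk through $j$ and a node of $\subcrit_l$ whose length is a sufficiently large multiple of $\sigma_l$; such closed walks exist because closed walks through two prescribed nodes of an irreducible digraph realize every sufficiently large multiple of $\gamma$, and $\gamma\mid\sigma_l$. The paper avoids the issue entirely by quoting \cite[Lemma 10.2]{wCSR}, which states exactly that if $(CS^tR[A])_{ij}$ is not finite then neither is $\tilde{B}^t_{ij}$, so that case needs no proof. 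With that citation (or the construction above) substituted for your limit argument, your proof goes through and agrees with the paper's.
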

This proposition is inspired by \cite[Theorem 10.1]{wCSR} but it is different since we need to have the maximum of $T_{wr}$ over subgraphs $\subcrit_{l}$ in the bound. Also note that $T_{cr}$ is replaced with $T_{wr}$. Therefore it will require a proof.
\begin{proof}
Assume that $t$ is greater than the right hand side of \eqref{eq:T2Tcr}. We need to prove that 
\begin{equation} \label{eq:T2Cond}
    t\sr(A) \otimes (CS^{t}R[\Tilde{A}])_{ij} \geq t\sr(B) \otimes \tilde{B}^t_{ij}
\end{equation}
holds for all $i$,$j$, where $\Tilde{A}=A-\sr(A)$ 
and $\Tilde{B}=B-\sr(B)$.
Before we begin this, recall that $(CS^tR[\tilde{A}])_{ij}=(CS^tR[A])_{ij}$. 
Using \cite[Theorem 6.1]{wCSR} and \cite[Corollary 6.2]{wCSR} we have that
\begin{equation*}
    (CS^{t}R[\tilde{A}])_{ij} 
    = \max_{\nu=1,\ldots,m}\left(p\left(\mathcal{W}^{t,\sigma_{\nu}}\left(i \xrightarrow{\subcrit_{\nu}} j \right)\right)\right).
\end{equation*}
where $\sigma_{\nu}$ is
the cyclicity of $\subcrit_{\nu}$, and the weights are computed in $\digr(\Tilde{A})$. If $(CS^tR[\Tilde{A}])_{ij}$ is finite then one of the sets
$\mathcal{W}^{t,\sigma_{\nu}}\left(i \xrightarrow{\subcrit_{\nu}} j \right)$ is non-empty. Let it 
be non-empty for $\nu=\mu$ for some $\mu$, then we have:
$$ (CS^tR[\Tilde{A}])_{ij}\geq p\left(\mathcal{W}^{t,\sigma_{\mu}}\left(i \xrightarrow{\subcrit_{\mu}} j \right)\right)\geq T_{wr}^{\sigma_{\mu}}(\tilde{A},\subcrit_{\mu})\min_{k,l} \tilde{a}_{kl},
$$
as $\mathcal{W}^{t,\sigma_{\mu}}\left(i \xrightarrow{\subcrit_{\mu}} j \right)$ contains a 
walk whose length does not exceed $T_{wr}^{\sigma_{\mu}}(\subcrit_{\mu})$ and as $\min_{k,l} \Tilde{a}_{kl}$ 
is non-positive. 
We further  obtain that 
\begin{equation}
\label{e:CSRbelow}
(CS^tR[\Tilde{A}])_{ij}\geq \min_{\nu}\left(T_{wr}^{\sigma_{\nu}}(\Tilde{A},\subcrit_{\nu})\min_{kl} \Tilde{a}_{kl}\right).
\end{equation}

By \cite[Lemma 10.2]{wCSR} if the entry $(CS^tR[A])_{ij}$ is not finite then neither is $\tilde{B}^{t}_{ij}$ and there is nothing to prove, so, we assume that $(CS^tR[A])_{ij}=(CS^tR[\Tilde{A}])_{ij}$) is finite.  Passing to 
$A=\sr(A)\otimes \Tilde{A}$, we then use~\eqref{e:CSRbelow}
to argue that the inequality
\begin{equation}
    t \sr(A) + \min_{\nu}\left(T_{wr}^{\sigma_\nu}(\tilde{A},\subcrit_{\nu})( \min_{kl}a_{kl} -\sr(A)) \right) \geq t\sr(B) + \cd(\digr{(B)})( \max_{kl} b_{kl} - \sr(B)) 
\end{equation}    
guarantees~\eqref{eq:T2Cond}. Rearranging the last inequality we obtain
 \begin{equation}
 \begin{split}
    t(\sr(A)-\sr(B)) \geq   &\max_{\nu}\left(T_{wr}^{\sigma_\nu}(\tilde{A},\subcrit_{\nu})(\sr(A) - \min_{kl}a_{kl}) \right)\\ 
    &+ \cd\left( \digr{(B)} \right)( \max_{kl} b_{kl} - \sr(B)),
\end{split}
\end{equation}

Since $\left(\sr(A) - \min_{kl}a_{kl} \right)$ does not depend on $\nu$ and $\sr(A) \ge \sr(B)$,
dividing this inequality by $\sr(A)-\sr(B)$ does not change its sign and makes its right-hand side identical with that of~\eqref{eq:T2Tcr}. Therefore any $t$ greater than~\eqref{eq:T2Tcr} will satisfy~\eqref{eq:T2Cond} as well, thus completing the proof.
\end{proof}
Using this proposition along with Corollary~\ref{c:TcRLin} and Proposition~\ref{p:TcRn} we can prove new bounds for $T_2$.

\begin{theorem} \label{th:T2TcrBounds}
Let $A \in \Rmax^{d\times d}$ be irreducible
with cyclicity $\gamma$ and factor rank~$r$ and let $B$ be subordinate to $A$ such that $\sr(B)\neq -\infty$. Then the following bounds on $T_{2}(A,B)$ hold.

\begin{align} 
\label{eq:T2TcrBoundSchwartz}
T_{2}(A,B) & \le \frac{\left(\gamma\wiel\left(\left\lfloor\frac{d}{\gamma}\right\rfloor\right) +d -1\right)(\sr(A)-\min_{ij}a_{ij})+\cd_{B}(\max_{ij}b_{ij}-\sr(B))}{\sr(A)-\sr(B)},
\\
\label{eq:T2TcrBoundDM}
     T_{2}(A,B) & \le \frac{\left(\hat{\sigma}\left(\left\lfloor\frac{d}{\gamma}\right\rfloor -1 \right)+d -1\right)(\sr(A)-\min_{ij}a_{ij})+\cd_{B}(\max_{ij}b_{ij}-\sr(B))}{\sr(A)-\sr(B)}, 
\\
\label{eq:T2TcrBoundrankSchwartz}
     T_{2}(A,B) & \le \frac{\left(\gamma\wiel\left(\left\lfloor\frac{r}{\gamma}\right\rfloor\right)+r +\gamma\right)(\sr(A)-\min_{ij}a_{ij})+\cd_{B}(\max_{ij}b_{ij}-\sr(B))}{\sr(A)-\sr(B)},
\\
\label{eq:T2TcrBoundrankDM}
     T_{2}(A,B) & \le \frac{\left(\hat{\sigma}\left(\left\lfloor\frac{r}{\gamma}\right\rfloor -1 \right)+r +\gamma\right)(\sr(A)-\min_{ij}a_{ij})+\cd_{B}(\max_{ij}b_{ij}-\sr(B))}{\sr(A)-\sr(B)},
\end{align}

where $\hat{\sigma}$ is the greatest cyclicity of the 
s.c.c. of $\crit(A)$.
\end{theorem}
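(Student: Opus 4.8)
The plan is to obtain all four estimates from Proposition~\ref{p:T2Tcr}, whose right--hand side contains the quantity $\max_i T_{wr}^{\sigma_i}(\tilde A,\subcrit_i)$ for $\tilde A=A-\sr(A)$ and a representing subgraph $\subcrit$ of $\crit(A)$ with s.c.c.'s $\subcrit_1,\dots,\subcrit_m$ of cyclicities $\sigma_i$. Everything reduces to bounding this maximum, and the four bounds differ only in the choice of $\subcrit$ and in which estimate of Section~\ref{s:Thresh} is used. Throughout I use that $\sr(\tilde A)=0$, $\crit(\tilde A)=\crit(A)$ and $\digr(\tilde A)=\digr(A)$ (so $\gamma$, $r$, $\hat\sigma$ are unchanged), that $T_{wr}^{\sigma}(\tilde A,\subcrit_i)\le T_{cr}^{\sigma}(\tilde A,\subcrit_i)$ for subgraphs of $\crit(A)$, and that $T_{cr}$ depends only on the unweighted digraph. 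Since $\digr(A)$ has cyclicity $\gamma$, every cycle length is a multiple of $\gamma$; hence each $\sigma_i$ and each girth of $\crit(A)$ is a multiple of $\gamma$.

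For the two max--cyclicity bounds \eqref{eq:T2TcrBoundDM} and \eqref{eq:T2TcrBoundrankDM} I take $\subcrit=\crit(A)$, so the $\subcrit_i$ are the s.c.c.'s of $\crit(A)$, with $\sigma_i\le\hat\sigma$. For \eqref{eq:T2TcrBoundDM} I combine $T_{wr}\le T_{cr}$ with Proposition~\ref{p:TcRLinGen} to get $T_{wr}^{\sigma_i}(\tilde A,\subcrit_i)\le \sigma_i\lfloor d/\gamma\rfloor+d-\sigma_i-1=\sigma_i(\lfloor d/\gamma\rfloor-1)+d-1\le\hat\sigma(\lfloor d/\gamma\rfloor-1)+d-1$. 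For \eqref{eq:T2TcrBoundrankDM} I apply Proposition~\ref{p:Twr}(ii) with $l=\sigma_i\in\gamma\N$: by Lemma~\ref{l:crit-related} the related subgraph $\check\subcrit_i$ has cyclicity $\sigma_i$, so all of its cycle lengths are multiples of $\sigma_i$ and $\circumf(\check\subcrit_i)\ge\sigma_i$, giving $T_{wr}^{\sigma_i}(\tilde A,\subcrit_i)\le \sigma_i(\lfloor r/\gamma\rfloor-1)+r+\gamma\le\hat\sigma(\lfloor r/\gamma\rfloor-1)+r+\gamma$. In both cases Proposition~\ref{p:T2Tcr} then delivers the claimed inequality.

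For the two Schwarz bounds \eqref{eq:T2TcrBoundSchwartz} and \eqref{eq:T2TcrBoundrankSchwartz} I take each $\subcrit_i=Z_i$, a cycle of minimal length (a girth cycle) in the $i$-th s.c.c. of $\crit(A)$, so $\sigma_i=l(Z_i)$ is a multiple of $\gamma$, and I split on whether $l(Z_i)$ is maximal. For \eqref{eq:T2TcrBoundSchwartz}: when $l(Z_i)<\gamma\lfloor d/\gamma\rfloor$, so $l(Z_i)\le\gamma(\lfloor d/\gamma\rfloor-1)$, Corollary~\ref{c:TcRLin} gives $T_{cr}^{l(Z_i)}\le l(Z_i)(\lfloor d/\gamma\rfloor-1)+d-1\le\gamma(\lfloor d/\gamma\rfloor-1)^2+d-1\le\gamma\wiel(\lfloor d/\gamma\rfloor)+d-1$; when $l(Z_i)=\gamma\lfloor d/\gamma\rfloor$ with $\lfloor d/\gamma\rfloor\ge2$, Proposition~\ref{p:TcRn} gives exactly $\gamma\wiel(\lfloor d/\gamma\rfloor)+d-1$, while the degenerate case $\lfloor d/\gamma\rfloor=1$ (all cycles of length $\gamma$) is covered directly by Corollary~\ref{c:TcRLin}, which yields $d-1$. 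For \eqref{eq:T2TcrBoundrankSchwartz}: by Lemma~\ref{l:girth-related} the related walk $\check Z_i$ is again a cycle of length $l(Z_i)$; as it lies in the $r$-node digraph $\digr(\check A)$ and $l(Z_i)$ is a multiple of $\gamma$, every girth of $\crit(A)$ is at most $\gamma\lfloor r/\gamma\rfloor$. When $l(Z_i)$ is non-maximal I use Proposition~\ref{p:Twr}(ii) with $\circumf(\check Z_i)=l(Z_i)$ to get $l(Z_i)(\lfloor r/\gamma\rfloor-1)+r+\gamma\le\gamma\wiel(\lfloor r/\gamma\rfloor)+r$; when $l(Z_i)=\gamma\lfloor r/\gamma\rfloor$ this value realizes the max-girth of $\crit(A)$, so Proposition~\ref{p:Twr}(iv) applies and yields $\gamma\wiel(\lfloor r/\gamma\rfloor)+r+\gamma$. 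In each case $\max_i T_{wr}^{\sigma_i}(\tilde A,Z_i)$ is bounded by the required factor, and Proposition~\ref{p:T2Tcr} finishes the proof.

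The bookkeeping part is the identity $\gamma\wiel(\lfloor\cdot/\gamma\rfloor)=\gamma(\lfloor\cdot/\gamma\rfloor-1)^2+\gamma$ that matches the quadratic $T_{cr}/T_{wr}$ estimates to the stated Wielandt form, together with the observation that divisibility by $\gamma$ forces a non-maximal girth to satisfy $l(Z_i)\le\gamma(\lfloor\cdot/\gamma\rfloor-1)$. I expect the genuinely delicate point to be the maximal case of \eqref{eq:T2TcrBoundrankSchwartz}: one must pass to the factor-rank digraph $\digr(\check A)$, use the relatedness lemmas (Lemmas~\ref{l:girth-related} and~\ref{l:crit-related}) to transport girths and cyclicities, and thereby verify both that all girths are at most $\gamma\lfloor r/\gamma\rfloor$ and that a girth cycle attaining this value meets the hypotheses of Proposition~\ref{p:Twr}(iv). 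This is where the sharper factor-rank machinery, rather than the purely combinatorial $T_{cr}$ bounds, is indispensable.
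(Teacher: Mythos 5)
Your proposal is correct and follows essentially the same route as the paper: Proposition~\ref{p:T2Tcr} as the engine, with girth cycles per s.c.c.\ plus the maximal/non-maximal case split (Corollary~\ref{c:TcRLin}, Proposition~\ref{p:TcRn}, Lemma~\ref{l:Tcrcomp}) for \eqref{eq:T2TcrBoundSchwartz}, the s.c.c.'s of $\crit(A)$ with Proposition~\ref{p:TcRLinGen} for \eqref{eq:T2TcrBoundDM}, and the relatedness lemmas feeding Proposition~\ref{p:Twr}(ii)/(iv) for \eqref{eq:T2TcrBoundrankSchwartz} and \eqref{eq:T2TcrBoundrankDM}. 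Your explicit treatment of the degenerate case $\left\lfloor\frac{d}{\gamma}\right\rfloor=1$ via Corollary~\ref{c:TcRLin} is in fact slightly more careful than the paper's argument, which applies Proposition~\ref{p:TcRn} uniformly and would overshoot by $\gamma$ there.
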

\begin{proof}
For the first bound we recall that the length of each cycle does
not exceed $\gamma\left\lfloor\frac{d}{\gamma}\right\rfloor$, and the second largest length does not exceed $\gamma\left(\left\lfloor\frac{d}{\gamma}\right\rfloor-1\right).$
If the $\nu$'th s.c.c. of $\crit(A)$ has only cycles of the maximal length $\gamma\left\lfloor\frac{d}{\gamma}\right\rfloor$ 
then denoting one of such cycles by $Z_{\nu}$ and using Proposition~\ref{p:TcRn} 
we have
\begin{equation*}
   T_{cr}^{l(Z_{\nu})}(A,Z_{\nu}) \le \gamma \left( \left\lfloor\frac{d}{\gamma}\right\rfloor -1 \right)^{2} +d +\gamma -1.
\end{equation*}
If it has a cycle $Z_{\nu}$ with smaller length, then using Corollary~\ref{c:TcRLin}
we obtain
\begin{equation*}
    T_{cr}^{l(Z_{\nu})}(A,Z_{\nu}) \le l(\cycle_{\nu})\left\lfloor\frac{d}{\gamma}\right\rfloor -l(\cycle_{\nu})+d-1.
\end{equation*}
We can bound this from above using Lemma~\ref{l:Tcrcomp} to get again 
that
\begin{equation*}
    T_{cr}^{l(Z_{\nu})}(A,Z_{\nu})\leq \gamma \left( \left\lfloor\frac{d}{\gamma}\right\rfloor -1 \right)^{2} +d +\gamma -1.
\end{equation*}
Using that $T_{wr}\leq T_{cr}$ we can substitute the above bound into Proposition~\ref{p:T2Tcr}, where we set $\subcrit_{\nu}=Z_{\nu}$ for each $\nu$.  
Thus we get
the first bound~\eqref{eq:T2TcrBoundSchwartz}.

For the second bound, we set $\subcrit_{\nu}$ to be the s.c.c.'s of $\crit(A)$. Using Proposition~\ref{p:TcRLinGen} we obtain
\begin{equation*}
    T_{cr}^{\sigma_{\nu}}(A,\subcrit_{\nu}) \le 
    \sigma_{\nu}\left\lfloor\frac{d}{\gamma}\right\rfloor-\sigma_{\nu}+d-1, 
\end{equation*}
where $\sigma_{\nu}$ is the cyclicity of $\subcrit_{\nu}$.
Substituting this into Proposition~\ref{p:T2Tcr} we get the second bound~\eqref{eq:T2TcrBoundDM}.

For the third bound, we assume without loss of generality that $\sr(A)=0$. In each component of $\crit(A)$ we select a cycle of minimal length. Let $Z_{\nu}$ be such a cycle in $\nu$th component. By Lemma~\ref{l:girth-related}, for any $Z_{\nu}$ there is a related cycle $\check{Z}_{\nu}$ in the related s.c.c. of $\crit(\check{A})$ with the same length, and this length is bounded from above by 
$\gamma\left\lfloor \frac{r}{\gamma}\right\rfloor$ (since $r$ is the dimension of $\check{A}$). Proposition~\ref{p:Twr} (ii) then gives the following bound:
\begin{equation*}
T_{wr}^{l(Z_{\nu})}(A,Z_{\nu})\leq l(Z_{\nu})\left\lfloor\frac{r}{\gamma}\right\rfloor+r-l(Z_{\nu})+\gamma.    
\end{equation*}
If $l(Z_{\nu})\leq \gamma\left(\left\lfloor \frac{r}{\gamma}\right\rfloor-1\right)$, then the right-hand side of this bound is bounded by
$$
\gamma\wiel\left(\left\lfloor \frac{r}{\gamma}\right\rfloor\right) +r+\gamma,
$$
by an argument similar to Lemma~\ref{l:Tcrcomp}. If 
$l(Z_{\nu})=\gamma\left\lfloor \frac{r}{\gamma}\right\rfloor,$ then 
\begin{equation*}
T_{wr}^{l(Z_{\nu})}(A,Z_{\nu})\leq \gamma\wiel\left(\left\lfloor \frac{r}{\gamma}\right\rfloor\right) +r+\gamma
\end{equation*}
from Proposition~\ref{p:Twr} (iv). We then substitute this bound in Proposition~\ref{p:T2Tcr}, where we set $\subcrit_{\nu}=Z_{\nu}$ for each $\nu$,
and obtain the third bound.

For the fourth bound, using Proposition~\ref{p:Twr} (ii), we obtain 
\begin{equation*}
    T_{wr}^{\sigma_{\nu}}(A,\subcrit_{\nu}) \le 
    \sigma_{\nu}\left\lfloor\frac{r}{\gamma}\right\rfloor+r-\sigma_{\nu}+\gamma, 
\end{equation*}
where $\subcrit_{\nu}$ is a s.c.c. of $\crit(A)$ and $\sigma_{\nu}$ is the cyclicity of this component. Here we also use that 
$\sigma_{\nu}$ is a multiple of $\gamma$ (hence it can be taken for $l$)  and that $\circumf(\check\subcrit_{\nu})$ (circumference of the s.c.c. of $\crit(\check{A})$ which is related to 
$\subcrit_{\nu}$) is not smaller than the cyclicity of $\check\subcrit_{\nu}$ (equal to $\sigma_{\nu}$ by Lemma~\ref{l:crit-related}). 
\end{proof}

With these bounds it remains to check that they are better than the previous ones. Obviously, \eqref{eq:T2TcrBoundDM} is better than 
\eqref{eq:T2Bound2}, and it remains to compare \eqref{eq:T2TcrBoundSchwartz}
with \eqref{eq:T2Bound1}. This is achieved in the following
\begin{proposition}\label{p:T2Comp}
For any irreducible matrix $A\in \Rmax^{d\times d}$ with subordinate matrix $B$, the bound~\eqref{eq:T2TcrBoundSchwartz} is smaller than the
bound~\eqref{eq:T2Bound1}.
\end{proposition}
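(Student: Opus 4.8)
The plan is to notice that the two bounds~\eqref{eq:T2Bound1} and~\eqref{eq:T2TcrBoundSchwartz} are fractions with the \emph{same} denominator $\sr(A)-\sr(B)$ and numerators of identical shape $X\cdot(\sr(A)-\min_{ij}a_{ij})+\cd_B(\max_{ij}b_{ij}-\sr(B))$, differing only in the coefficient $X$. Thus the whole comparison collapses to an inequality between the two coefficients. First I would record the three facts that make this reduction legitimate: (a) since $B$ is subordinate to $A$, every cycle of $\digr(B)$ is a cycle of $\digr(A)$, hence $\sr(B)\le\sr(A)$, and the common denominator $\sr(A)-\sr(B)$ is positive whenever the bounds are finite; (b) the factor $\sr(A)-\min_{ij}a_{ij}$ is nonnegative, because the maximum cycle mean $\sr(A)$ is attained on a cycle whose arcs all have weight at least the minimum finite entry $\min_{ij}a_{ij}$; (c) the additive term $\cd_B(\max_{ij}b_{ij}-\sr(B))$ is literally the same in both numerators. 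Given (a)--(c), it suffices to prove
\[
\gamma\,\wiel\!\left(\left\lfloor \tfrac{d}{\gamma}\right\rfloor\right)+d-1\ \le\ d^2-d+1 .
\]

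The crucial observation is the identity $d^2-d+1=\wiel(d)+(d-1)$, valid for $d>1$ because $\wiel(d)=(d-1)^2+1$. Feeding it into the displayed inequality cancels the $(d-1)$ on both sides and leaves the purely arithmetic statement
\[
\gamma\,\wiel\!\left(\left\lfloor \tfrac{d}{\gamma}\right\rfloor\right)\ \le\ \wiel(d),
\]
which is exactly the fact that Schwarz's refinement never exceeds Wielandt's number. I would prove it by cases on $q=\lfloor d/\gamma\rfloor$. For $q=1$ the left-hand side vanishes and the inequality is immediate. For $q\ge 2$, I use $\wiel(q)=(q-1)^2+1$ together with $d\ge\gamma q$ to replace $\wiel(d)$ from below by $(\gamma q-1)^2+1$; after expanding, the desired inequality becomes $\gamma q^2(1-\gamma)\le 2(1-\gamma)$, which is an equality for $\gamma=1$ and, for $\gamma\ge 2$, reduces to $\gamma q^2\ge 2$ upon dividing by the negative quantity $1-\gamma$, and this holds trivially since $q\ge 2$.

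I do not anticipate a genuine obstacle: the entire content is the bookkeeping that the denominators and the $\cd_B$-terms agree, combined with the elementary inequality $\gamma\,\wiel(\lfloor d/\gamma\rfloor)\le\wiel(d)$. The only points needing care are the degenerate cases $d=1$ and $\gamma=1$ (where $\wiel$ collapses and the two bounds may coincide, so ``smaller'' should be read as ``does not exceed''), and the sign conditions $\sr(A)-\min_{ij}a_{ij}\ge 0$ and $\sr(A)-\sr(B)>0$, which are precisely what guarantee that lowering the coefficient $X$ lowers the value of the fraction.
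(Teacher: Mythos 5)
Your proof is correct and follows essentially the same route as the paper: both reduce the comparison to the coefficient inequality $\gamma\,\wiel\!\left(\left\lfloor d/\gamma\right\rfloor\right)+d-1\le d^2-d+1$ (since the denominators and the $\cd_{B}$-terms coincide and the multiplier $\sr(A)-\min_{ij}a_{ij}$ is nonnegative), and then settle it by elementary algebra exploiting $\gamma\left\lfloor d/\gamma\right\rfloor\le d$. The only cosmetic difference is that the paper relaxes $\left\lfloor d/\gamma\right\rfloor$ to $d/\gamma$ and ends at $2\gamma\le d^2$, whereas you cancel $d-1$ and prove $\gamma\,\wiel\!\left(\left\lfloor d/\gamma\right\rfloor\right)\le\wiel(d)$ by cases on $\left\lfloor d/\gamma\right\rfloor$; both arguments are sound, including your handling of the degenerate cases and of equality when $\gamma=1$.
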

\begin{proof}
Upon comparing the two bounds the inequality simplifies down to trying to prove that
\begin{equation*}
    \gamma\left(\left\lfloor\frac{d}{\gamma}\right\rfloor-1\right)^{2} +\gamma +d -1 \leq d^2 -d +1,
\end{equation*}
which is the same as
\begin{equation*}
 \gamma\left(\left\lfloor\frac{d}{\gamma}\right\rfloor-1\right)^{2} +\gamma  \leq d^2 -2d +2.
\end{equation*}

We will prove the slightly stronger
\begin{equation*}
 \gamma\left(\frac{d}{\gamma}-1\right)^{2} +\gamma  \leq d^2 -2d +2,
\end{equation*}
which is the same as
\begin{equation*}
 \frac{d^2}{\gamma} +2\gamma  \leq d^2 +2,
\end{equation*}
and as
\begin{equation*}
 2\gamma\left(1-\frac{1}{\gamma}\right)  \leq d^2\left(1-\frac{1}{\gamma}\right),
\end{equation*}
and finally as
$$2\gamma\le d^2,$$
which holds whenever $d\ge 2$.
The case $d=1$ being trivial, the proposition is proved.
\end{proof}

\providecommand{\bysame}{\leavevmode\hbox
to3em{\hrulefill}\thinspace}
\providecommand{\MR}{\relax\ifhmode\unskip\space\fi MR }
\providecommand{\MRhref}[2]{%
  \href{http://www.ams.org/mathscinet-getitem?mr=#1}{#2}
} \providecommand{\href}[2]{#2}

\end{document}